\newtheorem{theorem}{Theorem}[section]
\newtheorem{lemma}[theorem]{Lemma}
\newtheorem{corollary}[theorem]{Corollary}
\newtheorem{proposition}[theorem]{Proposition}
\newtheorem{remark}[theorem]{Remark}
\newtheorem{definition}[theorem]{Definition}
\newtheorem{example}[theorem]{Example}
\numberwithin{equation}{section}
\newcommand\Tr{\mathrm{Tr}}
\newcommand\esp{\mathbb E}
\newcommand\var{\mathbb Var}
\newcommand\limN{\underset{N \rightarrow \infty}\longrightarrow}
\newcommand\Nlim{\underset{N \rightarrow \infty}\lim}
\newcommand\eqa{\begin{eqnarray}}
\newcommand\qea{\end{eqnarray}}
\newcommand\eq{\begin{eqnarray*}}
\newcommand\qe{\end{eqnarray*}}
\newcommand\etc{,\ldots ,}
\newcommand\MN{\mathrm{M}_N(\mathbb C) }
\newcommand\DN{\mathrm{D}_N(\mathbb C) }
\newcommand\Dxy{\mcal D^{(\mbf x, \mbf y)}}
\newcommand\one{\mathbbm{1}}
\newcommand\mbf{\mathbf}
\newcommand\mcal{\mathcal}
\newcommand\mbb{\mathbb}
\title{Freeness over the diagonal and global fluctuations of complex Wigner matrices}
\author{Camille Male  \\
\\  UMI Laboratorio Solomon Lefschetz, Mexico \\ \& Centre National de la Recherche Scientifique}
\begin{document}
\date{}
\maketitle
\begin{abstract}We characterize the  limiting 2$^{nd}$ order distributions of certain independent complex Wigner and deterministic matrices using Voiculescu's notions of freeness over the diagonal. If the Wigner matrices are Gaussian, Mingo and Speicher's notion of 2$^{nd}$ order freeness gives a universal rule, in terms of marginal 1$^{st}$ and 2$^{nd}$ order distribution. We adapt and reformulate this notion for operator-valued random variables in a 2$^{nd}$ order probability space. The Wigner matrices are assumed to be permutation invariant with null pseudo variance and the deterministic matrices to satisfy a restrictive property.
%
\end{abstract}

Primary 15B52, 46L54; secondary 46L53, 60F05. Keywords: 
Free Probability, Large Random Matrices, Freeness with Amalgamation, Central Limit Theorem

\tableofcontents

\section*{Introduction}

\begin{definition}\label{Def:Wigner}We call \emph{Wigner matrix} a Hermitian random matrix
$X_N =\frac{1}{\sqrt N}(x_{ij})$, such that the entries $(x_{ij})_{i\leq j}$ are centered and 
independent, diagonal entries are identically distributed real random variables, 
sub-diagonal entries are identically distributed complex random 
variables, the distribution of $x_{ij}$ does not depend on $N$ and $x_{ij}$ has bounded 
moments of all orders ($\esp(|x_{ij}|^k) < \infty$ for all $i, j, k\geq0$). We say that $X_N$ has \emph{null pseudo-variance} if $\esp\big[x_{ij}^2\big]=0$ for $i\neq j$. 

\end{definition}

By Voiculescu's celebrated asymptotic freeness theorem \cite{Voi91}, we know that free probability theory describes the global joint distribution of a large class of random matrices in large dimension. This result states in particular that if $\mbf X_N=(X_j)_{j\in J}$ is a family of independent Gaussian Wigner matrices and $\mbf Y_N=(Y_k)_{k\in K}$ a family of deterministic matrices which converges in $^*$-distribution to variables $\mbf y = (y_k)_{k\in K}$, then the joint family $\mbf X_N \cup \mbf Y_N$ converges to $\mbf x \cup \mbf y$ where $\mbf x= (x_j)_{j\in J}$ is a free semicircular system, free from $\mbf y$. Convergence is with respect to the expectation of the normalized trace, namely  for all $^*$-polynomials $p\in \mbb C\langle \mbf x, \mbf y, \mbf y^*\rangle$
	\eqa\label{IntroPhi}
		\Phi_N(p)  := \esp\Big[ \frac 1 N \Tr \, p(\mbf X_N, \mbf Y_N, \mbf Y_N^*) \Big] \limN \Phi(p).
	\qea

 The convergence toward a semicircular system is very robust, in particular it holds for non Gaussian Wigner matrices \cite{Dyk93} and for many variations of Wigner's model: with bistochastic variance profile \cite{Shl96}, for the band matrix with of intermediate growth \cite{Au18}, the diluted matrix \cite{Mal17}. Remarkably, asymptotic freeness appears for certain non independent matrices, as for independent GUE matrices along with their transpose and their degree \cite{BDJ06,MP16,CDM}.

In this article, we study the joint global fluctuations, that is the collection $\mbf Z_N=(Z_N(p))_p$ of complex random variables
 indexed by $^*$-polynomials,
  	\eqa\label{Intro:ZN}
		Z_N(p):=  \Tr \, p(\mbf X_N, \mbf Y_N, \mbf Y_N^*) -\esp\big[ \Tr \, p(\mbf X_N, \mbf Y_N, \mbf Y_N^*) \big].
	\qea
The limiting fluctuations for linear statistics of a single real Wigner matrix is characterized by Khorunzhy, Khoruzhenko and Pastur in \cite{KKP96}, with the approach of the Stieltjes transform rather than moments. The limit is Gaussian but is not universal: it depends on the fourth moment $\esp[|x_{ij}|^4]$ of the Wigner matrix $X_N = \big( \frac{x_{ij}}{\sqrt N}\big)_{ij}$ . See \cite[Theorem 2.1]{Gui09} for a statement for moments.

When the Wigner matrices are GUE matrices, Mingo and Speicher prove in \cite{MS06} that  $\mbf Z_N$ converges also to a Gaussian process whenever the family $\mbf Y_N$ have a limiting $^*$-distribution. They define the notion of \emph{second-order freeness} which describes more generally the limit of unitarily invariant matrices at first and second order: in particular, this notion gives a formula for the limiting covariance
	\eqa\label{IntroPhi2}
		\Phi_N^{(2)} (p,q^*) &= & \esp\big[Z_N(p)\overline{Z_N(q)}\big] \limN \Phi^{(2)} (p,q^*)
	\qea
that can be interpreted in terms of the so-called \emph{spoke diagrams} \cite[Definition 6.3 and Figure 11]{MS06}. In general Wigner matrices are not asymptotically 2$^{nd}$ order free: the 2$^{nd}$ order limit of real orthogonal invariant random matrices is given by Redelmeier's notion of \emph{real} 2$^{nd}$ order freeness \cite{Red12}. Covariance computations for these ensembles involve also the so-called \emph{twisted} spoke diagrams. 

In this article, we consider complex Wigner matrices with null pseudo-variance. The dependence in the fourth moment for the fluctuation of Wigner matrices is an obstacle for a universal presentation. Nevertheless, under the hypotheses on the Wigner and deterministic matrices stated in Section 2, we see in this article that Voiculescu's notion of \emph{operator-valued probability space over the diagonal} allows to bypass this difficulty. In a sense properly clarified in next sections, freeness with amalgamation over the diagonal rules the asymptotic fluctuations of theses ensembles thanks to a modification of Mingo and Speicher theory.

A motivation for this article is the following open question in random matrix theory: how to characterize the limit of spectral linear statistics for the sum $X_N+Y_N$ of a Wigner and an independent Hermitian matrix ? Khorunzhy consider in \cite{Kho94} the case of a GOE and a deterministic matrix; Ji and Lee in \cite{JL20} consider real Wigner and diagonal matrices; Dallaporta and F\'evrier in  \cite{DF19} consider complex Wigner (with arbitrary real pseudovariance) and deterministic diagonal matrices. The question remains open when $Y_N$ is not diagonal, for which the global fluctuations depend not only on the spectrum of $Y_N$. Heuristically, studying the asymptotic of the process $\mbf Z_N$ of \eqref{Intro:ZN} we search for a good theoretical framework for this spectral linear statistics problem, with the perspective of finding a good analogue of the \emph{2$^{nd}$ order Stieltjes transform}. Our investigation suggests that \emph{freeness over the diagonal} could be useful for Wigner matrices with zero pseudo-variance, but only with deterministic matrices that satisfy an assumption somehow \emph{opposed to diagonality}, see assumption (H5) and Remark \ref{Rk:Deloc}. Finding a unifying point of view to include the models of \cite{JL20,DF19} remains an open question.

We pursue here a combinatorial analysis initiated in \cite{MMPS}.  
 Under mild hypotheses on the deterministic matrices and assuming the Wigner matrices permutation invariant (see Section \ref{Sec:Statements}), 
 we know from  \cite{MMPS} that $\mbf Z_N$ is bounded. Its possible limits are Gaussian and we have a combinatorial description of the limit of the 2$^{nd}$ order $^*$-distribution. Two important cases can be distinguished.
\begin{enumerate}
	\item Let $\Delta(A)=\mathrm{diag}_i(A_{ii})$ be the diagonal of a matrix $A$. If all Wigner matrices have null pseudo-variance, then the possible limit of $\Phi_N^{(2)}$ depends on the limit of $\esp\big[\frac 1N \Tr  \big( \Delta [p(\mbf Y_N)] \Delta [q(\mbf Y_N)]\big) \big]$, for all polynomials $p,q$.
	\item Otherwise it also depends on the limit of $\esp\big[\frac 1N \Tr \big(P(\mbf Y_N)Q(\mbf Y_N)^t)\big]$ where $A^t$ is the transpose of the matrix $A$. 
\end{enumerate}
Moreover we know that random and deterministic matrices are asymptotically free over the diagonal under mild conditions \cite{ACDGM}. With these techniques we prove that this behavior is somehow \emph{robust} at 2$^{nd}$ order for the matrices considered.

Finally, we observe a common phenomenon in 2$^{nd}$ order freeness and its extension, formulated briefly as follow: \emph{denoting $\Phi:E \to \mbb C$ and $\Phi^{(2)}:E^2 \to \mbb C$ the 1$^{st}$ and 2$^{nd}$-order distributions of a limiting multi-matrix model $\mbf a = \sqcup_{\ell\in L} \mbf a_\ell$, there are canonical subspaces $F_n \subset E$, $n\geq 2$, orthogonal w.r.t. $ \Phi^{(2)}$, where \emph{$\Phi^{(2)}$ is collinear to the canonical bilinear form associated to $\Phi$}, namely}
	\eqa\label{Eq:CollIntro}
		\Phi^{(2)} (p,q^*) = n \Phi (pq^*), \ \ \forall p,q \in F_n;
	\qea
	\emph{the spaces $F_n$ depends only on the class of the model (e.g. unitarily invariant matrices or Wigner with null pseudo-variances) and are associated to a notion of independence (e.g. freeness or freeness over the diagonal).}

 For the matrix model studied in this article, $\Phi^{(2)} $ is not universal so it cannot be determined by $\Phi$ only. Yet we can state a collinearity property \eqref{Eq:CollIntro} to describe partially $\Phi^{(2)} $ by considering the problem on a space $E$ that extends the space of polynomials (with the \emph{$\Delta$-polynomials}). The description of the universal part of $\Phi^{(2)}$ comes with an additional condition, in the form of a Leibniz Formula.
 
\section{Preliminaries}\label{Sec:Prelim}

In order to describe the global fluctuations of random matrices  we propose to use the following abstract setting.
\begin{definition}\label{Def:SecOrdSpace} An \emph{ operator-valued 2$^{nd}$ order probability space}  is the data of a quintuple $(\mcal A, \Phi, \Phi^{(2)}, \mcal D, \Delta)$ as follow.
\begin{enumerate}
	\item $(\mcal A,\Phi)$ is a \emph{tracial $^*$-probability space}, namely $\mcal A$ is a $^*$-algebra and $\Phi$ is a unital and tracial linear form on $\mcal A$: $ \Phi[1] = 1, \ \ \Phi[ab]=\Phi[ba] \ \forall a,b \in \mcal A.$
	\item $(\mcal A, \Phi, \Phi^{(2)})$ is a \emph{2$^{nd}$ order probability space}, i.e $\Phi^{(2)}$ is a symmetric bilinear form on $\mcal A$, tracial in each variable, s.t. $\Phi^{(2)}[1,a] = \Phi^{(2)}[a,1]=0 \forall a\in \mcal A$.
	\item $(\mcal A,  \mcal D, \Delta)$ is an \emph{operator-valued probability space}, namely $\mcal D \subset \mcal A$ a unital subalgebra and  $\Delta: \mcal A \to \mcal D$ is a conditional expectation, i.e. a unital linear map s.t. $
			\Delta(d_1ad_2) = d_1 \Delta(a) d_2, \ \forall a\in \mcal A, \forall d_1,d_2 \in \mcal D.$
	\item We furthermore assume that $\Phi$ is \emph{invariant under $\Delta$}, i.e $ \Phi\big[ \Delta(a)\big] = \Phi[a]$ for any $ a\in \mcal A$, and that $\Phi^{(2)}$ is invariant under $\Delta$ in both variables. 
\end{enumerate}
\end{definition}

Let $\MN$ be the set of deterministic $N$ by $N$ complex matrices and $\DN$ the set of diagonal matrices. For any $N\geq 1$, fix  $\Omega_N$ a probability space in the classical sense. Denote  by $\mathrm L^{\infty-}\big( \MN \big)$ and $\mathrm L^{\infty-}\big(\DN \big)$ the spaces of random and random diagonal matrices on $\Omega_N$ respectively, whose entries have finite moments of all orders. We set $\Phi_N:A\mapsto \esp\big[ \frac 1 N \sum_{i=1}^N A_{ii}\big]$ the expectation of the normalized trace on $\mathrm L^{\infty-}\big( \MN \big)$, we set  the covariance of traces function
	$$\Phi_N^{(2)}:(A,B) \mapsto  \esp\Big[ \big( \Tr \,  A - \esp[ \Tr \,  A] \big) \big( \Tr  \, B - \esp[ \Tr  \, B] \big)\Big],$$
and we set $\Delta:A\mapsto  \mathrm{diag}_i(A_{ii})$ the projection of a random matrix into its diagonal part. We then get an operator-valued 2$^{nd}$ order probability space  with random coefficients.
	$$\big( \mathrm L^{\infty-}\big(  \MN \big),\Phi_N, \Phi^{(2)}_N ,\mathrm L^{\infty-}\big( \DN \big),  \Delta \big).$$

Let $\mbf X_N$ be a collection of independent Wigner matrices and $\mbf Y_N$ be deterministic matrices. As presented previously, our purpose is to understand the limit $(\mbf x, \mbf y)$ of $(\mbf X_N, \mbf Y_N) $ as $N$ goes to infinity in the sense of 2$^{nd}$ order probability spaces and compute $\Phi^{(2)}[p,q]= \lim \Phi_N^{(2)}\big[ p(\mbf X_N, \mbf Y_N), q(\mbf X_N, \mbf Y_N)\big]$ for $p,q$ $^*$-polynomials. The convergence is already proved in \cite{MMPS}, but here the additional operator-valued setting provides a more natural presentation of the limit.

We do not construct a 2$^{nd}$ order theory of operator-valued probability, with a bilinear map  $\Delta^{(2)}$ describing \emph{fluctuation operators}, e.g. $N(\Delta(A) - \esp[A])$ for a random matrix $A$. Also, contrary to the other applications of freeness over the diagonal in \cite{Shl96,BD17} we need \emph{random} diagonal matrix coefficients. The strategy in this article is to introduce only the \emph{ad-hoc} material sufficient to solve our matrix problem, based on a restriction of the notion of traffics distribution \cite{Mal11}, for which we already know all asymptotics of interest.

Let $J$ be an index set and let $\mbf x=(x_j)_{j\in J}$ be non commutative indeterminates. We set $\mbb C\langle  \mbf x   \rangle$ the space of non commutative polynomials in $\mbf x$.  We recall that a \emph{$\mcal D$-valued monomial in the variables $\mbf x$} is a word $p = d_0 x_{j_1}  d_1 \cdots  x_{j_n} d_n,$ where $n\geq 0$, $d_0 \etc d_L \in \mcal D$ (called the \emph{coefficients} of $p$), and $j_1 \etc j_n  \in J$. We set $\mcal D\langle \mbf x  \rangle$ the space of $\mcal D$-valued polynomials, finite sums of $\mcal D$-valued monomials in $\mbf x$. 

\begin{definition}\label{Def:OpValDistr}Let $(\mcal A, \mcal D, \Delta)$ be an operator-valued probability space.
\begin{itemize}
	\item  The \emph{operator-valued distribution} $\Delta_{\mbf a}$ of a collection $\mbf a=(a_j)_{j\in J} \in \mcal A^J$  is the map $\Delta_{\mbf a} : p\mapsto \Delta\big(p(\mbf a) \big)\in \mcal D$, defined for all $ p\in  \mcal D\langle  \mbf x \rangle$. 
	\item A tuple $(a_1 \etc a_n)$ is \emph{$\mcal D$-alternating} in the collections $\mbf a_\ell$, $\ell \in L$, if $n\geq 2$ and $a_i= p_i(\mbf a_{\ell_i}) $ for indices $\ell_1 \neq \ell_2 \neq \cdots \neq \ell_n \in L$ where $p_i$ is a $ \mcal D$-valued polynomial. The tuple is \emph{$\Delta$-centered} if  $\Delta( a_i )  = 0, \forall i=1\etc n$. 
	\item The collection $\mbf a_\ell, \ell \in L$, are \emph{free over $\mcal D$} whenever $\Delta( a  ) = 0$ for any $a=a_1\cdots a_n$ where $n\geq 2$ and $(a_1 \etc a_n)$  is $\mcal D$-alternating and $\Delta$-centered.
\end{itemize}
\end{definition}

We instead use the following notion of \emph{$\Delta$-distribution}, based on a representation of operator-valued polynomials coefficients. This approach is used in \cite{ACDGM} in the case of matrices, we introduce it in the general case and in a simpler way. 
 
\begin{definition}\label{Def:DeltaPolynome} Let $\mbf x = (x_j)_{j\in J}$ be a collection of indeterminates. 
	\begin{enumerate}
	\item A \emph{bracketed word} in $\mbf x$ is a word in the $x_j$ and in a left and a right bracket symbols $[$ and $]$, that belongs to the smallest monoid by concatenation containing $\mbf x$ and stable by the \emph{bracketing operation} $\Delta : w \mapsto [w]$. 
	\item The \emph{set of $\Delta$-monomials in the variables $\mbf x$} is the quotient monoid given by the relations: $\Delta(1)  = 1$, where $1$ is the empty word, and $\forall \omega_1,\omega_2,\omega_3$
		\eq	
			\Delta\big( \Delta( \omega_1)\omega_2\Delta( \omega_3)  \big) & \sim & \Delta( \omega_1)\Delta(\omega_2)\Delta( \omega_3).		\qe
\end{enumerate}
\end{definition}

	We set  $\mathbb C\langle  \mbf x \rangle_\Delta$ the space of \emph{$\Delta$-polynomials}, i.e. finite complex linear combinations of $\Delta$-monomials in $\mbf x$.  Product and evaluation of $\Delta$ are extended linearly. For any $q \in \mathbb C\langle  \mbf x \rangle_\Delta$ and for any collection $\mbf a = (a_j)_{j\in J}$ of elements of an operator-valued probability space $(\mcal A, \mcal D, \Delta)$, we set $q(\mbf a)\in \mcal A$ by substituting indeterminates by elements of $\mbf a$ and interpreting brackets as the evaluation of $\Delta$. 

\begin{definition}\label{Def:OpValNotions} Let $(\mcal A, \Phi, \Phi^{(2)}, \mcal D, \Delta)$ be an operator-valued 2$^{nd}$ order probability space and $\mbf a=(a_j)_{j\in J} \in \mcal A^J$. For each $N\geq 1$, let $\mbf A_N=(A_{N,j})_{j\in J}$ be a collection of random matrices in $\mathrm L^{\infty-}\big(  \MN \big)$. Let $\mbf x=(x_j)_{j\in J}$ be indeterminates. 		\begin{enumerate}
		\item The \emph{$\Delta$-distribution} of $\mbf a$  is the map	$\Phi_{\mbf a}: p \mapsto \Phi\big[p(\mbf a)\big]$ defined for $ p \in \mbb C\langle  \mbf x , \mbf x^* \rangle_\Delta$. We say that $\mbf A_N$ \emph{converges to $\mbf a$ in $\Delta$-distribution} whenever the $\Delta$-distribution converges pointwise: $\forall p \in \mbb C\langle  \mbf x , \mbf x^* \rangle_\Delta$, $ \Phi_{\mbf A_N}\big[ p \big] \limN \Phi_{\mbf a}\big[ p\big]$.
		\item The \emph{2$^{nd}$ order $\Delta$-distribution} of $\mbf a$ is the map $\Phi^{(2)}_{\mbf a}: (p,q) \mapsto \Phi^{(2)}\big[p(\mbf a), q(\mbf a)\big]$, defined for $p,q \in \mbb C\langle  \mbf x , \mbf x^* \rangle_\Delta$. The sequence $\mbf A_N$ \emph{converges to $\mbf a$ in 2$^{nd}$ order $\Delta$-distribution} whenever it converges to $\mbf a$ in $\Delta$-distribution and the process of random variables  $\mbf Z_N =  ( \Tr \, p(\mbf A_N) - \esp[ \Tr \, p(\mbf A_N)]  )_{p\in \mbb C\langle  \mbf x , \mbf x^* \rangle_\Delta}$
converges in law to the centered Gaussian process $\mbf z=(z_p)_{p\in \mbb C\langle  \mbf x , \mbf x^* \rangle_\Delta}$ with covariance function $\esp[ z_{p_1}\overline{z_{p_2}}]  =  \Phi^{(2)}_{\mbf a}[ p_1,p_2^*]$.
	\end{enumerate}
\end{definition}

We call \emph{$\Delta$-algebra generated by $\mbf a$} the set $ \mathbb C\langle  \mbf a \rangle_\Delta:=\{p(\mbf a), \forall p \in \mathbb C\langle  \mbf x \rangle_\Delta\}$. It is the smallest unital subalgebra of $\mcal A$ closed by $\Delta$ that contains $\mbf a$. We say that $a\in  \mathbb C\langle  \mbf a \rangle_\Delta$ is $\Delta$-invariant if $\Delta(a)=a$, and denote by $\mcal D^{(\mbf a)}$ the set of $\Delta$-invariant elements of $ \mathbb C\langle  \mbf a \rangle_\Delta$. The triplet $(\mathbb C\langle  \mbf a \rangle_\Delta, \mcal D^{(\mbf a)}, \Delta)$ is an operator-valued probability space, and the $\Delta$-distributions of $\mbf a$ is given by the restriction of $\Phi$ on $ \mathbb C\langle  \mbf a \rangle_\Delta$.

Note that there is no known analogue of freeness for the $\Delta$-distributions. In particular $\mcal D$-freeness does not determine entirely the $\Delta$-distribution, see remark \ref{commentAn} and Lemma \ref{WhyA1}. This is not important in this article since we consider a particular situation: at 1$^{st}$ order the algebra of coefficient $\mcal D^{(\mbf a)}$ is isomorphic to $\mbb C$, see Hypothesis (H5) next section. This implies that the  $\Delta$-distribution follows trivially from the $^*$-distribution. 

\section{Statements of the results}\label{Sec:Statements}

We now state hypotheses on matrices and non commutative random variables that we use in the statements below. Let  $X_N=(\frac{x_{ij}}{\sqrt N})_{i,j}$ be a Wigner matrix.
	\begin{enumerate}
		\item [{(H1)}] $X_N$ is invariant in law by the permutation group, or equivalently the entry $x_{12}$ is distributed as its complex conjugate $\bar x_{12}$.
		\item [{(H2)}]  The \emph{pseudo-variance} $\esp(x_{ij}^2)$ of each nondiagonal entry $i\neq j$ equals zero.	
	\end{enumerate}
Let $\mbf Y_N$ be a collection of deterministic matrices.
	\begin{enumerate}
		\item [{(H3)}] $\sup_{N} \| Y_N\| < \infty$ for any $Y_N$ matrix of $\mbf Y_N$, for $\| \, \cdot\, \|$ the operator norm.
		\item [{(H4)}] $\mbf Y_N$ has a limiting $\Delta$-distribution: for any $p \in \mathbb C \langle \mbf y , \mbf y^*\rangle_\Delta$, the limit exists
	 $$\frac 1 N \Tr \, p(\mbf Y_N, \mbf Y_N^*) \limN \Phi [ p  ],$$
	where $\Delta$ acts on $\MN$ by projecting a matrix on its diagonal part. 
	\end{enumerate}
Let $\mbf a$ be a collection of elements of $(\mcal A, \Phi, \Phi^{(2)}, \mcal D, \Delta)$ as in Definition \ref{Def:OpValNotions}. We will invoke this property which means that $\Delta$ is trivial at 1$^{st}$ order (see Remark \ref{Rk:Deloc}):	\begin{enumerate}
		\item  [{(H5)}] Any $\Delta$-invariant element $a\in \mcal D^{(\mbf a)}$ of the $\Delta$-algebra generated by $\mbf a$ is distributed w.r.t. $\Phi$ as the deterministic scalar $\Phi(a)1$: $\forall a_1,a_2 \in  \mbb C\langle \mbf a \rangle_\Delta$ 
	\eqa\label{Eq:TransitivityShort}
		\Phi\big( \Delta(a_1) \Delta(a_2)\big) = \Phi (a_1) \times \Phi(a_2).
\qea
\end{enumerate}

Condition (H1) is inherent to our method and presumably technical; (H2) and (H5) are crucial for our statements. Without loss of generality, we will assume $\mbf Y_N$ closed by adjoint: if a matrix $Y_N$ belongs to $\mbf Y_N$, then so does the adjoint $Y_N^*$. This simplifies notations, e.g. with $\mbb C\langle \mbf x, \mbf y\rangle$ instead of $\mbb C\langle \mbf x, \mbf y,  \mbf y^*\rangle$.

We first study the 1$^{st}$ order $\Delta$-distribution and state a slight generalisation of the asymptotic freeness of Wigner and deterministic matrices.

\begin{theorem}\label{MainTh1} Let $\mbf  X_N$ be a collection of independent Wigner matrices satisfying (H1) and $\mbf  Y_N$ be a collection of deterministic matrices satisfying (H3), (H4). Then $(\mbf X_N, \mbf Y_N)$ converges in $\Delta$-distribution to  a collection of variables $(\mbf x, \mbf y)$. The convergence
\eqa\label{CVMainTh1}
	\frac 1N \Tr \ p(\mbf X_N, \mbf Y_N) \limN \Phi\big[ p(\mbf x, \mbf y) \big],\  \forall p \in \mbb C\langle \mbf x, \mbf y \rangle_\Delta
\qea
holds in expectation, in probability and almost surely. Furthermore for any $\Delta$-monomial $m$ in the variables $(\mbf x, \mbf y)$ and for any variable $x$ in the collection $\mathbf x$, we have
	\eqa\label{SDeq}
		\Phi [ x m  ] =\sum_{ m = \ell x r}\Phi[x^2] \Phi [\ell] \Phi [r],
	\qea
where the sum is over all decompositions of $m$ as a product of $\Delta$-monomials $\ell \times x \times r$. Finally, $\mbf x$ satisfies (H5) and if the limit $\mbf y$ of $\mbf Y_N$ also satisfies (H5) then so does $(\mbf x, \mbf y)$.
\end{theorem}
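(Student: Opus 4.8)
The plan is to establish the three assertions of Theorem~\ref{MainTh1} in order: convergence in $\Delta$-distribution, the Schwinger--Dyson type equation \eqref{SDeq}, and the propagation of (H5).

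For the first assertion, I would reduce the convergence of $\frac1N\Tr\,p(\mbf X_N,\mbf Y_N)$ for a $\Delta$-monomial $p$ to the corresponding statement about ordinary $^*$-polynomials by unfolding the brackets. Indeed a $\Delta$-monomial in $(\mbf x,\mbf y)$ is a word of the form $m_0[m_1]\cdots[m_k]$ where the $m_i$ are themselves (recursively) built from letters and brackets; evaluating on matrices, each bracket $[\,\cdot\,]$ becomes the diagonal projection $\Delta$, and $\frac1N\Tr$ of such an expression is a polynomial expression in quantities of the form $\frac1N\Tr(\Delta[q_1(\mbf X_N,\mbf Y_N)]\cdots\Delta[q_r(\mbf X_N,\mbf Y_N)])$ for $^*$-polynomials $q_i$. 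These are exactly the traffic-type observables whose almost sure limits are known from \cite{MMPS,ACDGM} under (H1), (H3), (H4); the permutation invariance (H1) is what guarantees the limit of the joint traffic distribution exists and is given by the ``free traffic'' construction. So the existence of $\mbf x$, and \eqref{CVMainTh1} in expectation, probability and almost surely, follows by quoting those results and checking that the combinatorial bookkeeping of brackets is consistent with the monoid relations of Definition~\ref{Def:DeltaPolynome} (the relation $\Delta(\Delta(\omega_1)\omega_2\Delta(\omega_3))\sim\Delta(\omega_1)\Delta(\omega_2)\Delta(\omega_3)$ corresponds precisely to the fact that $\Delta$ on matrices is a $\DN$-bimodule map, which the matrix model satisfies exactly at finite $N$).

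For the Schwinger--Dyson equation \eqref{SDeq}, I would work at finite $N$ and perform the standard integration-by-parts / cumulant expansion in the entries of the Wigner matrix $x$, using centredness and independence of entries. Writing $x=\frac1{\sqrt N}(x_{ij})$, one expands $\esp[\frac1N\Tr(x\,m(\mbf X_N,\mbf Y_N))]$ by summing over the positions where the first $x$ is contracted against another $x$ appearing inside $m$. Two features of the hypotheses make the limit clean: first, (H2) (null pseudo-variance, $\esp[x_{ij}^2]=0$ for $i\neq j$) kills the ``twisted'' pairings that would otherwise contribute transpose-type terms, leaving only the pairing that splits $m=\ell\,x\,r$ into two independent factors joined at the contracted index; second, the higher cumulants of the entries contribute only $o(1)$, a standard power-counting argument (each extra cumulant of order $\geq 3$ costs a factor of $N^{-1/2}$ per extra edge but gains at most one free summation index). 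The diagonal entries and the bracket structure need care: one must check that a contraction hitting a letter $x$ that sits inside a bracket $[\cdots x \cdots]$ still factorizes correctly, which again uses that $\Delta$ is a conditional expectation and that in the limit $\Phi\circ\Delta=\Phi$ (Definition~\ref{Def:SecOrdSpace}(4)). Passing to the limit using the already-established convergence of $\Delta$-distributions then yields \eqref{SDeq}, with $\Phi[x^2]$ the limiting variance of an off-diagonal entry.

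For the last assertion, that $\mbf x$ satisfies (H5) and that (H5) propagates to $(\mbf x,\mbf y)$ when $\mbf y$ satisfies it, I would argue that \eqref{SDeq} forces a strong factorization: iterating the Schwinger--Dyson relation shows that any $\Delta$-invariant element of $\mbb C\langle\mbf x\rangle_\Delta$ is a scalar multiple of the identity in distribution, i.e. $\mcal D^{(\mbf x)}\cong\mbb C$. More precisely, one shows by induction on the length of $\Delta$-monomials that $\Phi[\Delta(a_1)\Delta(a_2)]=\Phi(a_1)\Phi(a_2)$: the key point is that $\Delta$ applied to a $\Delta$-monomial in $\mbf x$ either already is scalar (when the word has a bracket wrapping everything) or, by \eqref{SDeq} applied to the outermost or first $x$, reduces to products of shorter such quantities, and the cross term $\Phi[x^2]\Phi[\ell]\Phi[r]$ is manifestly multiplicative. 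For the joint statement, the essential input is the asymptotic freeness over the diagonal of $\mbf X_N$ and $\mbf Y_N$ (from \cite{ACDGM}): a $\Delta$-invariant element of $\mbb C\langle\mbf x,\mbf y\rangle_\Delta$ can be expanded, using $\mcal D$-freeness, into a sum of alternating products of $\Delta$-centered elements plus ``diagonal'' terms; since $\mcal D^{(\mbf x)}$ and (by hypothesis) $\mcal D^{(\mbf y)}$ are both trivial, the diagonal subalgebra of the joint model is generated by scalars and one concludes $\mcal D^{(\mbf x,\mbf y)}\cong\mbb C$, which is exactly \eqref{Eq:TransitivityShort}.

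I expect the main obstacle to be the second and third steps working in tandem: carrying out the entrywise cumulant expansion \emph{in the presence of arbitrary bracket structure} and verifying that the $o(1)$ error terms are uniform over $\Delta$-monomials of bounded complexity. The bracketing makes the combinatorics of which index summations survive genuinely more delicate than in the plain-polynomial case — one must track how diagonal projections constrain the index graph — and it is here that (H1) and (H2) are used in an essential, non-cosmetic way to discard all the non-factorizing contributions. Once \eqref{SDeq} is in hand in full generality, the propagation of (H5) is a relatively formal induction.
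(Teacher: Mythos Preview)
Your proposal contains a concrete error and takes a route different from the paper's.

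The error: in your derivation of \eqref{SDeq} you invoke (H2) to kill the ``twisted'' pairings. But (H2) is \emph{not} a hypothesis of Theorem~\ref{MainTh1}; only (H1), (H3), (H4) are assumed. The first-order SD equation holds for any Wigner matrix satisfying (H1), including real symmetric ones with nonzero pseudo-variance. In an integration-by-parts/cumulant scheme the pseudo-variance does produce a second-order contribution (a same-orientation pairing, schematically a $\frac1N\Tr(\ell^t r)$-type term), and your power-counting remark about cumulants of order $\geq 3$ does not dispose of it since it is a second cumulant. These terms \emph{are} $o(1)$ at first order, but for a topological reason you have not given: they force an extra index identification that breaks planarity. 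As written, your argument does not prove the theorem under its stated hypotheses.

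The paper's method is different throughout. It associates to each $\Delta$-monomial $m$ a well-oriented cactus $T_m$ (a test graph whose simple cycles are in bijection with the blocks of $\sigma(m)$), writes $\Phi_N(m)$ as a sum over quotients $T_m^\pi$, and uses an asymptotic traffic-independence result (Proposition~\ref{Prop:AsympIndep}, needing only (H1) and (H3)) to keep only $\pi$ with $T_m^\pi$ of \emph{double-tree type}. The SD equation is then proved combinatorially: the edge $e$ carrying the leading $x$ in $xm$ has a unique twin $e'$ in any double-tree quotient; a connectivity argument forces $e'$ to lie in the \emph{same simple cycle} of the cactus as $e$ (giving a decomposition $m=\ell x r$) and to have \emph{opposite orientation} (this is why pseudo-variance never enters, without assuming (H2)). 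Removing $\{e,e'\}$ disconnects the graph into the cacti of $\ell$ and $r$, yielding the factorization. Convergence of $\Phi_N$ is then deduced from the asymptotic SD equation via Remark~\ref{Rk:SDconv} and Lemma~\ref{UniqSD}, rather than established first and used to pass SD to the limit as you propose; almost-sure convergence is obtained from the variance bounds of Theorem~\ref{MainTh2} and Borel--Cantelli.

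For the propagation of (H5) the paper also argues differently and more economically than your induction plus $\mcal D$-freeness from \cite{ACDGM}: it sets $\tilde\Phi(m)=\prod_{B\in\sigma(m)}\Phi(m_B)$, checks directly (via the ground-block-change trick) that $\tilde\Phi$ satisfies the SD equation, and concludes $\tilde\Phi=\Phi$ by the uniqueness Lemma~\ref{UniqSD}. This is Lemma~\ref{Lem:Transitivity}, and it gives (H5) in the equivalent form (H5') without any appeal to operator-valued freeness.
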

The theorem is proved in Section \ref{Sec:FirstOrder}. The convergence \eqref{CVMainTh1} and Formula \eqref{SDeq} are known  for ordinary $^*$-polynomials instead of $\Delta$-polynomials \cite[Lemma 5.4.7]{AGZ10}. In this case Formula \eqref{SDeq} is called the Schwinger-Dyson equation and it is a classical consequence of freeness, see \cite[Theorem 14.4]{NS06}.  
In Theorem \ref{MainTh1} this equation characterizes the $\Delta$-distribution of $(\mbf x, \mbf y)$ in terms of the $\Delta$-distribution of $\mbf y$, as proved in Section \ref{Sec:NCprop} and illustrated in the example below.

\begin{example}\label{example1GBC} Assuming $\Phi(x^2)=1$, we compute $\alpha:=\Phi [ x^2\Delta(x^2y)yxyx ]$. Applying \eqref{SDeq} with $m= x\Delta(x^2y)yxyx $, the different decompositions give
 	$$\Phi [ x^2\Delta(x^2y)yxyx ]  =   \Phi[\Delta(x^2y)yxyx] + \Phi\big( x  \Delta(x^2y)y\big)\Phi(yx)+ \Phi( x\Delta(x^2y)yx y)  .$$
The middle term $\Phi\big( x  \Delta(x^2y)y\big)$ vanishes by \eqref{SDeq} since $q=\Delta(x^2y)y$ has no decomposition $q=\ell x r$. Moreover, by traciality the first and third terms are equal. By \eqref{SDeq} again we have $ \Phi[x\Delta(x^2y)yxy] = \Phi\big[\Delta(x^2y)y\big] \Phi[y]$. To compute $\Phi\big[\Delta(x^2y)y\big]$, by $\Delta$-invariance of $\Phi$ and the property of conditional expectation, we write
	\eqa
		 \Phi\big[\Delta(x^2y)y\big]  &= & \Phi\Big[ \Delta \big[ \Delta(x^2y)y \big] \Big] =  \Phi\Big[ \Delta(x^2y)\Delta  [ y  ] \Big]\nonumber\\
		 &  = & \Phi\Big[ \Delta\big[ x^2y \Delta  [ y  ]\big] \Big] = \Phi\big[x^2y\Delta(y)\big]. \label{TrickBC}
	\qea
By \eqref{SDeq} again, we get $\Phi\big[\Delta(x^2y)y\big] =  \Phi\big[\Delta(y)^2\big]$ and finally $\alpha = 2\Phi\big[\Delta(y)^2\big] \Phi[y]$.
\end{example}

\begin{remark}\label{Rk:Deloc}The last part in Theorem \ref{MainTh1} says, in terms of matrices, that if 
\eqa\label{RefH5} \frac 1 N \Tr \, \big[ \Delta\big(A_N \big) \Delta\big(A_N\big)^* \big]  -  \Big | \frac 1 N \Tr \, \big[  A_N \big] \Big|^2 \limN 0\qea  
for any $A_N = p(\mbf Y_N), p\in \mbb C\langle \mbf y \rangle_\Delta$, then \eqref{RefH5}  holds for any $A_N = p(\mbf X_N, \mbf Y_N), p\in \mbb C\langle \mbf x , \mbf y \rangle_\Delta$. In particular:
\begin{itemize}
	\item If $A_N$ is a diagonal matrix that is not asymptotically distributed as a scalar matrix, then \eqref{RefH5} does not hold. 
	\item If $A_N$ is a Hermitian matrix whose eigenvectors $u$ are \emph{delocalized}, i.e. satisfy a uniform estimate $|\langle u, e_j\rangle| =O(N^{-1/2})$ for the scalar product with elementary basis vectors $e_j$, then \eqref{RefH5} holds  (by an easy computation writing $A_N = \sum_{i=1}^N \lambda_i u_i u_i^*$ in eigenvectors basis).
\end{itemize}
\end{remark}

\begin{example} Assumption (H5) holds almost surely for realizations of unitarily invariant matrices satisfying (H4) \cite[Theorem 1.1]{CDM} and of uniform permutations matrices \cite[Section 3.2.2]{Mal11}. Traffics algebras satisfying (H5) are \emph{those for which traffic independence implies free independence} \cite[Corollary 2.9]{CDM}. 
\end{example}

We now consider our initial problem on 2$^{nd}$ order distributions. Next definition is fundamental for the study of $\Delta$-distribution, see Section \ref{Sec:NCprop}.

\begin{definition}\label{def:An} In a 2$^{nd}$ order operator-valued prob. space $(\mcal A, \Phi, \Phi^{(2)}, \mcal D, \Delta)$, we fix $\mbf a_\ell$, $\ell \in L$ collections of elements of $\mcal A$, and let $\mbf a$ denote the union of all $\mbf a_\ell$'s.
\begin{enumerate}
	\item For any $n\geq 2$, a tuple $(a_1 \etc a_n)$ is \emph{cyclically $\mcal D^{(\mbf a)}$-alternating}  if $a_i= p_i(\mbf a_{\ell_i}) $ for indices $\ell_1 \neq \cdots \neq \ell_n \neq \ell_1$ where $p_i$ is a $ \mcal D^{(\mbf a)}$-valued polynomial. We set $E_n$ the vector space generated by all products $a_1\cdots a_n$ where $(a_1\etc a_n)$ is cyclically $\mcal D^{(\mbf a)}$-alternating and $\Delta$-centered.
	\item  	We set $E_1$ the smallest vector space s.t. $\mcal Alg(\mbf a_\ell, \Delta( E_1) ) \subset E_1, \forall \ell \in L$. \end{enumerate}
\end{definition}

\begin{remark}\label{commentAn} If the $\mbf a_\ell$'s are free over $\mcal D$ (in the ordinary sense) then $\Phi$ vanishes on $E_n$ for $n\geq2$; but there is no a priori knowledge about $\Phi$ on $E_1$. 
\end{remark}

\begin{example} For each $i=1,2,\dots$ let $\ell_i\in L$ and $a_{\ell_i} \in \mbf a_{\ell_i}$.
\begin{itemize}
	\item $m=\big[a_{\ell_1}\Delta(a_{\ell_2}a_{\ell_3})a_{\ell_1}-\Delta\big(a_{\ell_1}\Delta(a_{\ell_2}a_{\ell_3})a_{\ell_1}\big)\big]  \big(a_{\ell_3}-\Delta(a_{\ell_3})\big) \in E_2$ if $\ell_1\neq \ell_3$. 
	\item $m =  \Delta(a_{\ell_1})a_{\ell_2} \Delta\big( a_{\ell_3}\Delta(a_{\ell_4}^2) a_{\ell_3}\big) a_{\ell_2}$ is in $E_1$. Indeed,  $m \in \mcal Alg(\mbf a_2, \Delta( E_1) )$ since we have $m = d_1a_{\ell_2} d_2 a_{\ell_2}$ where $d_1 \in \mcal D^{(\mbf a_1)} $ and $d_2 \in \Delta\big(\mcal Alg( \mbf a_3, \mcal D^{(\mbf a_4)})  \big)$. 
\end{itemize}
\end{example}

The analysis of Wigner matrices motivates the following definition.

\begin{definition}\label{LaMalNommeeDelta2Freeness} Let $(\mcal A, \Phi, \Phi^{(2)}, \mcal D, \Delta)$, $\mbf a_\ell,\ell \in L,$ and $E_n, n\geq 1$, be as in the previous definition, and assume that the collection of all $\mbf a_\ell$'s satisfy (H5). The families $\mbf a_\ell$ are \emph{2$^{nd}$ order $\Delta$-free} if they are free w.r.t. $\Phi$ and the following holds. 
	\begin{enumerate}
		\item \emph{Orthogonality conditions:} the vector spaces $E_n, n\geq 1,$ are orthogonal w.r.t. $\Phi^{(2)}$, i.e. for any $a\in E_{n}, b\in E_{m}$, $n\neq m$, then $\Phi^{(2)}(a,b^*)=0$; moreover, the $^*$-algebras generated by $\mbf a_\ell,\ell \in L$, are orthogonal w.r.t. $\Phi^{(2)}$, i.e. for any $^*$-polynomials $p$ and $ q$, one has $\Phi^{(2)}\big( p(\mbf a_\ell), q(\mbf a_{\ell'})^*)=0$ if $\ell \neq \ell'$.
		 \item \emph{Mingo-Speicher formula, $n\geq 2$:} for all $ (a_1\etc a_n), (b_1\etc b_n)$ cyclically $\mcal D^{(\mbf a)}$-alternating and  $\Delta$-centered, with $a=a_1\cdots a_n$, $b=b_1\cdots b_n$ we have
	\eqa\label{Eq:Def2ndOrderFreeMS}
		\Phi^{(2)}(a, b^*) =   \sum_{i=0}^{n-1}  \Phi(a_1b^*_{i+1}) \Phi(a_2b^*_{i+2}) \cdots \Phi(a_nb^*_{i+n}),
		\qea
where indices of $b_k$'s are counted modulo $n$ (e.g. $b_{n+1}=b_1$).
		\item \emph{Leibniz rule, $n=1$:} for all $a,b,c \in E_1$, we have
	\eqa\label{LeibnizRule}
			 \Phi^{(2)}\big( \Delta(a) \Delta(b) , c \big) =  \Phi^{(2)}\big( a , c \big) \Phi(b) +  \Phi(a)\Phi^{(2)}\big( b , c\big).
	\qea

	\end{enumerate}
\end{definition}

\begin{remark} We recall that no general notion of 1$^{st}$ order $\Delta$-freeness exists  so naming \emph{2$^{nd}$ order $\Delta$-freeness} is an abuse of terminology. Moreover, we name item (ii) \emph{Mingo-Speicher formula} since it is formally identical to the spoke diagram formula of ordinary 2$^{nd}$ order freeness, but it is stated in other spaces. 
\end{remark}

In Section \ref{Sec:NCprop} we prove that the rules (i), (ii) and (iii) determine uniquely the 2$^{nd}$ order $\Delta$ distribution of $\mbf a$ in terms of the marginal 1$^{st}$ and 2$^{nd}$ order $^*$-distributions of each $\mbf a_\ell$. We illustrate the method below.

\begin{example} We compute $\Phi^{(2)}(m, m^*)$ for $m=a_1 a_2\Delta(a_1a_2)$ where $a_i\in \mbf a_{\ell_i}, \ell_1\neq \ell_2$. We have $\Phi^{(2)}[m, \cdot] = \Phi^{(2)}[m_1 + m_1', \cdot]$ where $m_1 =  \big(a_1-\Delta(a_1)\big)  \big(a_2-\Delta(a_2)\big) \Delta(a_1a_2)  $ is in the space $E_2$ and $m_1'= - \Delta(a_1)  \Delta(a_2)  \Delta(a_1a_2)$. Moreover, using the same computation as for \eqref{TrickBC}, we have 
$\Phi^{(2)}(m'_1, \cdot)= \Phi^{(2)}(m''_1, \cdot)$ where $m''_1 = \Delta(a_1)  \Delta(a_2)  a_1a_2$. We write as before $\Phi^{(2)}[m''_1, \cdot]  =  \Phi^{(2)}[m_2+m_3, \cdot]$  where $m_2=\Delta(a_1)  \Delta(a_2) \big(a_1-\Delta(a_1)\big)  \big(a_2-\Delta(a_2)\big) \in E_2$ and $m_3 = \Delta(a_1) \Delta(a_2)\Delta(a_1) \Delta(a_2)$ is in $E_1$. Hence by the first orthogonality rule $\Phi^{(2)}(m,m^*) = \alpha_1 + \alpha_2 + \alpha_3$ where $\alpha_i =  \Phi^{(2)}(m_i,m_i^*)$. By Mingo-Speicher formula we have $\alpha_1 =   \Phi\big[ \big(a_1-\Delta(a_1)\big)\big(a_1-\Delta(a_1)\big)^* \big] \times    \Phi\big[ \big(a_2-\Delta(a_2)\big)\big(a_2-\Delta(a_2)\big)^*\big]  \big| \Phi(a_1 a_2)\big|^2.$ Moreover, by freeness of $a_1$ and $a_2$ and by (H5) we find $\alpha_1 = \var( a_1)\var( a_2)  \big| \Phi(a_1) \Phi( a_2)\big|^2$, where $\var(a) =  \Phi\big[ \big(a-\Phi(a)\big)\big(a-\Phi(a)\big)^* \big]$.  Similarly one shows $\alpha_1=\alpha_2$. By the Leibniz rule with $a =  \Delta(a_1)^2$ and $b=  \Delta(a_2)^2 $ and the second orthogonality rule, we have 
		$\alpha_3 =
		  \Phi^{(2)} \big( \Delta(a_1)^2  , \Delta(a_1^*)^2  \big) \big|\Phi\big(  \Delta(a_2)^2 \big)\big|^2 +  \big|\Phi\big(  \Delta(a_1)^2 \big)\big|^2 \Phi^{(2)} \big( \Delta(a_2)^2  , \Delta(a_2^*)^2  \big)$. Using the Leibniz rule with $a =b=a_i$, we get
		$\alpha_3 =  4|\Phi(a_1)|^2 \Phi^{(2)}(a_1,a_1^*)|\Phi(a_2)|^4 +4|\Phi(a_1)|^4 |\Phi(a_2)|^2 \Phi^{(2)}(a_2,a_2^*) $. Note that the $\alpha_i$'s have been written in terms of 1$^{st}$ and 2$^{nd}$ order ordinary moments of $a_1$ and of $a_2$. 
\end{example}

\begin{theorem}\label{MainTh2} Let $\mbf  X_N$ and $\mbf  Y_N$ be collections of independent Wigner and deterministic matrices satisfying (H1), (H2), (H3), (H4) and such that the limit $\mbf y$ of $\mbf Y_N$ satisfies (H5). Then $(\mbf X_N, \mbf Y_N)$ converges in 2$^{nd}$ order $\Delta$-distribution to a collection of variables $(\mbf x, \mbf y)$ such that the variables of $\mbf x$ are \emph{2$^{nd}$ order $\Delta$-free}, and are \emph{2$^{nd}$ order $\Delta$-free} from $\mbf y$. 
\end{theorem}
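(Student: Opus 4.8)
The plan is to deduce the statement from the combinatorial description of the limiting 2$^{nd}$ order $\Delta$-distribution already obtained in \cite{MMPS}, by checking the three axioms of 2$^{nd}$ order $\Delta$-freeness. The 1$^{st}$ order input is free: Theorem \ref{MainTh1} provides the limit $(\mbf x,\mbf y)$, the Schwinger--Dyson equation \eqref{SDeq} --- which restricted to ordinary $^*$-monomials is precisely the statement that the $x_j$'s form a free semicircular family, free from $\mbf y$ (cf. \cite[Lemma 5.4.7]{AGZ10}) --- and the fact that $(\mbf x,\mbf y)$ satisfies (H5). Via Remark \ref{Rk:Deloc}, (H5) for $\mbf y$ forces the matrix pair-traces $\frac1N\Tr(P(\mbf Y_N)Q(\mbf Y_N))$ and $\frac1N\Tr(\Delta[P(\mbf Y_N)]\Delta[Q(\mbf Y_N)])$ to converge and to factor through the $^*$-distribution, so the limiting $\Phi^{(2)}$ of \cite{MMPS} --- which, in the null pseudo-variance regime, depends on $\mbf Y_N$ only through these traces and the $\Delta$-distribution --- is determined by the marginal 1$^{st}$ and 2$^{nd}$ order $^*$-distributions of the individual $X_j$'s and of $\mbf y$. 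Since Section \ref{Sec:NCprop} shows that 2$^{nd}$ order $\Delta$-freeness together with those same marginals determines $\Phi^{(2)}_{(\mbf x,\mbf y)}$ uniquely, it is enough to verify conditions (i), (ii), (iii) of Definition \ref{LaMalNommeeDelta2Freeness} for the \cite{MMPS}-limit.

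To do so I would unfold the graph sum of \cite{MMPS}. It expresses $\Phi^{(2)}_N\big(p(\mbf X_N,\mbf Y_N),q(\mbf X_N,\mbf Y_N)^*\big)$ --- and, $\Delta$ being among the operations handled there, this covers $\Delta$-polynomials --- as a sum over bicoloured graphs recording how the i.i.d.\ Wigner entries of each colour and the blocks of deterministic matrices are paired; hypotheses (H1)--(H2) force the Wigner edges of a fixed colour to be matched in orientation-reversing pairs, (H4) evaluates the deterministic blocks, and (H5) reduces the $\Delta$-invariant pieces to scalars. The surviving diagrams are graded by a genus-type parameter. The key claim is then: when $a=a_1\cdots a_n$ and $b=b_1\cdots b_m$ are $\Delta$-centered and cyclically $\mcal D^{(\mbf a)}$-alternating, $\Delta$-centering forces each single-colour loop to be matched across the two words, so that only the \emph{spoke} diagrams survive, and these exist only for $n=m$; counting the spokes between two such words of common length $n$ reproduces exactly \eqref{Eq:Def2ndOrderFreeMS}. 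This yields at once the orthogonality of the $E_n$'s and the Mingo--Speicher formula; the remaining orthogonality statement in (i), between $^*$-algebras of distinct $\mbf a_\ell$'s, follows from independence of the $\mbf X_{N,\ell}$'s (and determinism of $\mbf Y_N$) together with centering.

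For the Leibniz rule (iii) I would argue at the level of matrix entries: $\Delta(A_NB_N)_{ii}=\sum_j(A_N)_{ij}(B_N)_{ji}$, so a bracket $\Delta(a)\Delta(b)$ inside a trace amounts to two independent diagonal insertions, and in the large-$N$ limit under (H5) the fluctuations carried by these two diagonals decorrelate (the delocalization of Remark \ref{Rk:Deloc}); the covariance $\Phi^{(2)}(\Delta(a)\Delta(b),c)$ therefore splits into the two terms of \eqref{LeibnizRule}, the cross term vanishing. The same conclusion can alternatively be read off the graph sum. Finally, the reduction of an arbitrary pair of $\Delta$-polynomials to the canonical pieces living in the $E_n$'s --- the manipulations illustrated after Definition \ref{LaMalNommeeDelta2Freeness}, using traciality, $\Delta$-invariance of $\Phi$ and $\Phi^{(2)}$, and the identity \eqref{TrickBC} --- together with the uniqueness result of Section \ref{Sec:NCprop}, concludes the proof.

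\textbf{Main obstacle.} The crux is the dictionary between the traffic-graph combinatorics of \cite{MMPS} and the spoke diagrams of Definition \ref{LaMalNommeeDelta2Freeness}: one must show that after imposing $\Delta$-centering and cyclic alternation exactly the spoke diagrams survive at leading order, with the right multiplicities, so that the graph sum collapses to \eqref{Eq:Def2ndOrderFreeMS}. The first-order input from Theorem \ref{MainTh1}, the scalar factorizations coming from (H5), and the matrix-entry computation behind the Leibniz rule are by comparison routine.
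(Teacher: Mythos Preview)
Your overall strategy --- take the graph-sum formulas of \cite{MMPS} as black boxes, then verify the three axioms of Definition \ref{LaMalNommeeDelta2Freeness} --- matches the paper's. The genuine differences are in how the technical work is carried out, and there are two places where your sketch diverges from what the paper actually does.

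First, the paper's 2$^{\text{nd}}$-order combinatorics distinguishes two families of contributing quotient graphs: the double-unicyclic type $\boldsymbol{\mcal{DU}}$ and the 4--2 tree type $\boldsymbol{\mcal{FT}}$ (Proposition \ref{Prop:SecondAsympIndep}). The $\boldsymbol{\mcal{FT}}$ terms are precisely where the fourth moment enters, and the key step is Corollary \ref{Cor:No42}: once you restrict to ``reduced'' partitions via the inclusion--exclusion of Lemma \ref{Lem:Reduceness}, $\Delta$-centering of a cyclically alternating product kills all $\boldsymbol{\mcal{FT}}$ contributions. Your proposal does not isolate this mechanism; you write that $\Delta$-centering forces ``only the spoke diagrams to survive,'' but the content is specifically that the fourth-moment graphs disappear while the double-unicyclic ones remain, and that this follows from a leaf-counting argument in the graph of colored components. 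The orthogonality of the $E_n$'s (Corollary \ref{Ortho}) is then a further consequence of the same leaf argument (Corollary \ref{PreOrtho}).

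Second, for the Mingo--Speicher formula (ii) you propose a direct bijective matching between surviving graphs and spoke diagrams. The paper explicitly remarks that this bijection is ``delicate, especially when trying to emphasis the crucial role of assumption (H5),'' and instead proves (ii) by establishing two parallel Schwinger--Dyson-type recurrences: one for $\Phi(q_1q_2^*)$ (Lemma \ref{FirstOrderSD2}) and one for $\Phi^{(2)}(q_1,q_2^*)$ (Lemma \ref{SecondOrderSD2}), then comparing them by induction on the ground degree of $s_1$, using (H5) to factor out a scalar $\Phi(r)$ from an inserted $\Delta(r)$. So your route for (ii) is a priori viable but not the one taken, and the paper flags it as harder.

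For (iii), your decorrelation heuristic is not the paper's argument. The paper proves \eqref{Eq:SecondOrderTransitivity} by decomposing $\Phi^{(2)}_N(p_1,p_2)$ into pieces $\alpha_k$ indexed by the location and length of the double cycle (or the multiplicity-four edge), writing explicit Schwinger--Dyson-type recursions for each $\alpha_k$, and invoking (H5) to factor. Your matrix-entry sentence (``two independent diagonal insertions \ldots\ decorrelate'') is the right intuition for why the Leibniz splitting should hold, but as written it does not distinguish the $\alpha_1$ term --- where the induction hypothesis enters --- from the $\alpha_k$, $k\geq 2$, terms that factor directly via (H5); turning it into a proof still requires the case analysis of Section \ref{Sec:ProofMainTh3}.
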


The theorem is proved in Section \ref{Sec:SecondOrder}. We now reformulate property (ii).

\begin{proposition}\label{Collin} Let $(\mcal A, \Phi, \Phi^{(2)}, \mcal D, \Delta)$, $\mbf a_\ell,\ell \in L,$ and $E_n, n\geq 2$, be as in Definition \ref{def:An}. Assume that the families $\mbf a_\ell$'s are free w.r.t. $\Phi$ and denote
	$$F_n = \textrm{Span} \, \Big\{  \sum_{i=0}^{n-1} a_{i+1} \cdots a_{i+n}, \, \Big| \, (a_1\etc a_n) \ \Delta \textrm{-centered \ cyclically \ }\mcal D^{(\mbf a)}\textrm{-alternating} \ \Big\},$$
where indices are counted modulo $n$. Then Mingo-Speicher formula (ii) is valid for all $n\geq 2$ if and only if $ \Phi^{(2)}(p,q^*) = n \Phi(pq^*)$, for all $n\geq 2$ and all $p,q\in F_n$.
\end{proposition}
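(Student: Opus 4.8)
The plan is to reduce the equivalence to a single first-order factorization identity and to the elementary combinatorics of cyclic shifts on $\mathbb Z/n$. For a $\Delta$-centered cyclically $\mcal D^{(\mbf a)}$-alternating tuple $(a_1,\dots,a_n)$ write $s_{\mbf a}=\sum_{i=0}^{n-1}a_{i+1}\cdots a_{i+n}$ (indices mod $n$) for the associated generator of $F_n$; since $F_n$ is spanned by such elements and $\Phi^{(2)}$ is bilinear, it suffices to treat $p=s_{\mbf a}$, $q=s_{\mbf b}$ for two such tuples of the same length $n$. The key lemma is that for $\Delta$-centered cyclically $\mcal D^{(\mbf a)}$-alternating $(a_1,\dots,a_n)$ and $(b_1,\dots,b_n)$,
\[
	\Phi\big(a_1\cdots a_n\,b_n^*\cdots b_1^*\big)=\prod_{\ell=1}^n\Phi(a_\ell b_\ell^*).
\]
Indeed each $a_i,b_i^*$ is $\Phi$-centered (by $\Delta$-centeredness and $\Phi\circ\Delta=\Phi$), so the word $a_1\cdots a_n b_n^*\cdots b_1^*$ has consecutive factors from distinct families except possibly at the junction $a_n\,|\,b_n^*$: if these come from different families the whole word is alternating, hence $\Phi$ vanishes by freeness and also $\Phi(a_nb_n^*)=0$; if they come from the same family, writing $a_nb_n^*=\Phi(a_nb_n^*)\,1+c_n$ with $c_n$ centered, the $c_n$-contribution is again an alternating centered word, hence $0$, leaving $\Phi(a_nb_n^*)\,\Phi(a_1\cdots a_{n-1}b_{n-1}^*\cdots b_1^*)$, and an induction on $n$ concludes. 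Applied to all cyclic rotations, which are again $\Delta$-centered cyclically alternating, this gives $\Phi\big((a_{i+1}\cdots a_{i+n})(b_{j+1}\cdots b_{j+n})^*\big)=\prod_{\ell=1}^n\Phi(a_{i+\ell}b_{j+\ell}^*)$, and reindexing the full-cycle product shows this equals a quantity $P(s)$ depending only on $s=j-i\bmod n$, namely $P(s)=\prod_{\ell=1}^n\Phi(a_\ell b_{\ell+s}^*)$.

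From the lemma, $\Phi(s_{\mbf a}s_{\mbf b}^*)=\sum_{i,j}P(j-i)=n\sum_{s=0}^{n-1}P(s)$. For the second-order side I expand $\Phi^{(2)}(s_{\mbf a},s_{\mbf b}^*)=\sum_{i,j}\Phi^{(2)}\big(a_{i+1}\cdots a_{i+n},(b_{j+1}\cdots b_{j+n})^*\big)$ and, assuming the Mingo--Speicher formula (ii), rewrite each term via (ii) as $\sum_{t=0}^{n-1}\prod_{\ell=1}^n\Phi(a_{i+\ell}b_{j+t+\ell}^*)=\sum_{t=0}^{n-1}P(j+t-i)$; since each value of $j+t-i$ in $\mathbb Z/n$ is attained by exactly $n^2$ triples $(i,j,t)$, one gets $\Phi^{(2)}(s_{\mbf a},s_{\mbf b}^*)=n^2\sum_{s}P(s)=n\,\Phi(s_{\mbf a}s_{\mbf b}^*)$, i.e.\ $\Phi^{(2)}(p,q^*)=n\Phi(pq^*)$ on $F_n$. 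This is the ``only if'' direction.

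For the converse, given $\Delta$-centered cyclically alternating $(a_1,\dots,a_n),(b_1,\dots,b_n)$ with $a=a_1\cdots a_n$, $b=b_1\cdots b_n$, traciality of $\Phi^{(2)}$ in each variable shows that $\Phi^{(2)}(a,b^*)$ is invariant under cyclic rotation of $a$ and of $b$ — here one uses that the cyclic rotations of $b^*=b_n^*\cdots b_1^*$ are precisely the words $(b_{j+1}\cdots b_{j+n})^*$ — so that $\Phi^{(2)}(a,b^*)=\frac{1}{n^2}\Phi^{(2)}(s_{\mbf a},s_{\mbf b}^*)$. Since $s_{\mbf a},s_{\mbf b}\in F_n$, the assumed collinearity together with the computation above gives $\Phi^{(2)}(a,b^*)=\frac{1}{n^2}\cdot n\,\Phi(s_{\mbf a}s_{\mbf b}^*)=\sum_{s=0}^{n-1}P(s)=\sum_{i=0}^{n-1}\prod_{\ell=1}^n\Phi(a_\ell b_{i+\ell}^*)$, which is exactly (ii).

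I expect the only delicate point to be the first-order lemma when the coefficients $p_i$ are genuinely $\mcal D^{(\mbf a)}$-valued rather than scalar: then $a_i=p_i(\mbf a_{\ell_i})$ need not lie in the $^*$-algebra generated by $\mbf a_{\ell_i}$ alone, so the freeness reduction used above must be carried out keeping track of the coefficients along the word — this is precisely the first-order combinatorial content prepared in Section \ref{Sec:NCprop} (and in the situations where the result is applied, (H5) collapses $\mcal D^{(\mbf a)}$ to scalars, making the point harmless). Everything else is the bookkeeping of cyclic shifts displayed above.
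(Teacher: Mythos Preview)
Your proof is correct and follows essentially the same route as the paper's. The paper introduces $\hat a=\tfrac1n s_{\mbf a}$, uses traciality to write $\Phi^{(2)}(a,b^*)=\Phi^{(2)}(\hat a,\hat b^*)$, invokes the freeness factorization $\Phi(a_1b_{i+1}^*)\cdots\Phi(a_nb_{i+n}^*)=\Phi(ab_{[i]}^*)$ (citing \cite[Lemma~5.18]{NS06} instead of reproving it), and concludes by the same cyclic-shift count; your argument simply spells out that lemma and separates the two implications, while the paper writes them as a single chain of equalities.

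Your closing caveat about $\mcal D^{(\mbf a)}$-valued coefficients is well placed: the factorization $\Phi(a_1\cdots a_n b_n^*\cdots b_1^*)=\prod_\ell\Phi(a_\ell b_\ell^*)$ is literally a statement about centered alternating words in \emph{scalar}-free subalgebras, and the paper's invocation of \cite{NS06} faces the same issue. In the paper's applications (H5) is in force, so each coefficient $d\in\mcal D^{(\mbf a)}$ is $\Phi$-distributed as the scalar $\Phi(d)1$ and the reduction to the scalar-coefficient case goes through; you are right that this is where the tension lies in the stated generality.
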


Hence, back to the central limit, by \emph{plugging-in} the 1$^{st}$ order estimators we get for a large matrix $M_N=P(\mbf X_N, \mbf Y_N)$ in  $F_n $ a Gaussian approximation
	\eqa
		 \frac 1 N \Tr M_N \sim  \mcal N \left( \esp\Big[  \frac 1 N \Tr M_N  \Big], \frac{  \esp\big[\frac {n} N \Tr (M_NM_N^*)\big]}{ N^2}\right).
	\qea

\begin{proof}[Proof of Proposition \ref{Collin}] For any $\Delta$-centered  cyclically $\mcal D^{(\mbf a)}$-alternating tuples $(a_1\etc a_n)$ and $(b_1\etc b_n)$, let $a=a_1\cdots a_n$, $\hat a = \frac 1 n \sum_{i=0}^{n-1}a_{1+i} \cdots a_{n+i}$ and similarly  let $b$ and $\hat b$. By traciality, $\Phi^{(2)}(a,b^*) = \Phi^{(2)}(\hat a,\hat b^*)$, so (ii) is valid whenever $\Phi^{(2)}(\hat a,\hat b^*) = \sum_{i=0}^{n-1} \Phi(a_1b^*_{i+1}) \cdots \Phi(a_nb^*_{i+n})$. Moreover freeness  of the $\mbf a_\ell$'s implies \cite[Lemma 5.18]{NS06} that $ \Phi(a_1b^*_{i+1}) \cdots \Phi(a_nb^*_{i+n}) = \Phi( a b_{[i]}^*)$, where $b_{[i]} = b_{i+1} \cdots b_{i+n}$ with indices counted modulo $n$. Hence Mingo-Speicher formula is valid if and only if $\Phi^{(2)}(\hat a,\hat b^*)=\sum_{i=0}^{n-1}  \Phi( a b_{[i]}^*)=\frac 1 n \sum_{i,j=0}^{n-1} \Phi( a_{[j]} b_{[i]}^*) = n  \Phi( \hat a\hat b^*)$.
\end{proof}

For comparison, we recall the definition of ordinary 2$^{nd}$ order freeness. Let $(\mcal A, \Phi, \Phi^{(2)})$ be a 2$^{nd}$ order probability space and $\mbf a_\ell, \ell\in L$ be families in $\mcal A$. A tuple $(a_1\etc a_n)$ of elements of $\mcal A$ is called cyclically alternating and centered whenever $a_i= p_i(\mbf a_{\ell_i}) $ for indices $\ell_1 \neq \cdots \neq \ell_n \neq \ell_1$ and $\Phi(a_i)=0$ for all $i$. 

\begin{definition}The families $\mbf a_\ell$ are \emph{2$^{nd}$ order free} if they are free, and:
	\begin{enumerate}
		\item  the vector spaces $E'_n, n\geq 2,$ are orthogonal w.r.t. $\Phi^{(2)}$,  the $^*$-algebras generated by $\mbf a_\ell,\ell \in L$, are orthogonal w.r.t. $\Phi^{(2)}$;
		 \item for all $ (a_1\etc a_n), (b_1\etc b_n)$ cyclically alternating and centered, with $a=a_1\cdots a_n$, $b=b_1\cdots b_n$
	\eqa
		\Phi^{(2)}(a, b^*) =   \sum_{i=0}^{n-1}  \Phi(a_1b^*_{i+1}) \Phi(a_2b^*_{i+2}) \cdots \Phi(a_nb^*_{i+n});
		\qea
		or equivalently $ \Phi^{(2)}(p,q^*) = n \Phi(pq^*)$, for all $n\geq 2$ and all $p,q$ in 
	$$F'_n := \textrm{Span} \, \Big\{  \sum_{i=0}^{n-1} a_{i+1} \cdots a_{i+n}, \, \Big| \, (a_1\etc a_n) \  \textrm{cyclically \ alternating \ centered} \ \Big\}.$$
	\end{enumerate}
	
\end{definition}

\section{Preliminaries on $\Delta$-distributions}\label{Sec:NCprop}

This section reviews properties of $\Delta$-distributions and useful tools:
\begin{itemize}
	\item Lemma \ref{UniqSD} studies the (generalized) Schwinger-Dyson (SD) equation;
	\item Lemma \ref{Lem:Transitivity} shows that the solution of SD equation satisfies (H5);
	\item Lemma \ref{WhyA1} considers characterization questions of $\Delta$-distributions.
\end{itemize}

Whereas later on we will always consider matrices, we do not assume in this section  that $\mcal D$ is commutative. We use the two following notions of degrees. 

\begin{definition}
Let $m$ be a $\Delta$-monomial, bracketing of a word $\omega=a_1\cdots a_n$. We call $n$ the \emph{full degree of $m$}. We call \emph{ground degree of $m$} the integer $n'$ such that $m=d_0 a_{j_1}d_1 \cdots d_{n'-1}a_{j_{n'}}d_{n'}$ where $j_k\in [n]$ and $d_j$ is $\Delta$-invariant. Full and ground degrees in a sub-family of variables are defined in the obvious way.
\end{definition}

Note that $n'\leq n$ with equality when $m$ is a monomial, and $n'=0$ whenever $m$ is $\Delta$-invariant.  For example $m= y\Delta(x y )y$ has full degree 4, ground degree 2; in the variable $  x$ it has full degree 1 and ground degree 0.

\begin{lemma}\label{UniqSD} Let $\Phi_0$ be a tracial $\Delta$-invariant linear form on $\mbb C\langle\mbf y\rangle_\Delta$. There exists a unique tracial $\Delta$-invariant linear form $\Phi$ on $\mbb C\langle \mbf x, \mbf y\rangle_\Delta$ such that $\Phi_{| \mbb C\langle \mbf y\rangle_\Delta} = \Phi_0$ and $\Phi$ is a solution of the SD equation: $\Phi [ x m  ] =\sum_{ m = \ell x r}\Phi [\ell] \Phi [r]$ for any $\Delta$-monomial $m$.
\end{lemma}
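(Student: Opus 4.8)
The plan is to prove existence and uniqueness separately, in both cases by induction on a suitable notion of degree. For \emph{uniqueness}, suppose $\Phi$ and $\Phi'$ are two such forms; I want to show $\Phi[m] = \Phi'[m]$ for every $\Delta$-monomial $m$ in $(\mbf x, \mbf y)$, proceeding by induction on the full degree in the variables $\mbf x$. If $m$ has $\mbf x$-degree zero, then $m \in \mbb C\langle \mbf y\rangle_\Delta$ and both forms agree with $\Phi_0$ there. Otherwise, by traciality I may cyclically rotate $m$ so that it begins with some $x \in \mbf x$, i.e. $m = x m'$ where $m'$ has $\mbf x$-degree one less than $m$. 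The SD equation then expresses $\Phi[m] = \Phi[xm'] = \sum_{m' = \ell x r} \Phi[\ell]\Phi[r]$, and in each summand both $\ell$ and $r$ have strictly smaller $\mbf x$-degree than $m$ (the factor $x$ in the middle is removed), so the induction hypothesis applies; the same identity holds for $\Phi'$, giving $\Phi[m] = \Phi'[m]$. One subtlety to address: the rotation must be compatible with the bracketing structure of $\Delta$-monomials, and the decomposition $m' = \ell x r$ ranges over decompositions \emph{as a product of $\Delta$-monomials}, so the removed $x$ must be one not enclosed in any bracket — but this is exactly the content of the SD equation as stated, applied after having rotated an unbracketed $x$ to the front (an unbracketed $x$ exists because $\Phi$ is $\Delta$-invariant, so $\Phi[m] = \Phi[\Delta(m)]$, and if \emph{every} occurrence of $\mbf x$-variables were bracketed, $\Delta(m)$ would reduce via the monoid relations to a product of lower-degree pieces to which induction applies directly).

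For \emph{existence}, I would define $\Phi$ on $\Delta$-monomials by the recursion suggested by uniqueness: set $\Phi := \Phi_0$ on $\mbb C\langle \mbf y\rangle_\Delta$, and for $m$ with positive $\mbf x$-degree, rotate (using the desired traciality) to bring an unbracketed $x$ to the front and declare $\Phi[m] := \sum_{m' = \ell x r}\Phi[\ell]\Phi[r]$, extending linearly. The work is then to check that this is \emph{well-defined} — independent of which unbracketed occurrence of an $\mbf x$-variable we chose to peel off, and compatible with the defining relations of the $\Delta$-monoid ($\Delta(1)=1$ and $\Delta(\Delta(\omega_1)\omega_2\Delta(\omega_3)) \sim \Delta(\omega_1)\Delta(\omega_2)\Delta(\omega_3)$) — and then that the resulting form is tracial, $\Delta$-invariant, and satisfies the SD equation for \emph{every} unbracketed $x$, not just the chosen one. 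The standard way to organize this is to first establish $\Delta$-invariance and traciality by the same downward induction, and then verify that the SD identity, now known to hold for one representative, propagates to all cyclic rotations because both sides transform the same way under rotation.

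The main obstacle is the well-definedness check: two different peelings of unbracketed $\mbf x$-variables from the same monomial must give the same value. This is the analogue of checking consistency of the Schwinger-Dyson recursion in the classical (non-$\Delta$) setting (cf. \cite[Lemma 5.4.7]{AGZ10}), complicated here by the bracket structure. The cleanest route is probably to prove simultaneously, by induction on full $\mbf x$-degree, the three statements ``$\Phi$ is well-defined, tracial, and $\Delta$-invariant'', so that the induction hypothesis supplies traciality and $\Delta$-invariance of the lower-degree pieces $\ell, r$ appearing in the recursion — these are precisely the properties one needs to match up the two expansions coming from two different choices of peeled variable. Once consistency is in hand, the remaining verifications (the full SD equation, compatibility with the monoid relations via $\Delta$-invariance of $\Phi_0$ and the conditional-expectation identities) are routine bookkeeping.
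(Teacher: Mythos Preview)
Your overall strategy---induction on the $\mbf x$-degree, using traciality to rotate and the SD equation to strip off an outer $x$---matches the paper's proof. There is, however, a genuine gap in your handling of the case where every occurrence of an $\mbf x$-variable in $m$ lies inside some bracket (i.e.\ $m$ has zero \emph{ground} degree in $\mbf x$ but positive \emph{full} degree). Your claim that ``$\Delta(m)$ would reduce via the monoid relations to a product of lower-degree pieces to which induction applies directly'' is not correct: applying $\Delta$ and using the relation $\Delta(\Delta(\omega_1)\omega_2\Delta(\omega_3))\sim \Delta(\omega_1)\Delta(\omega_2)\Delta(\omega_3)$ does not lower the full $\mbf x$-degree, and $\Phi$ is not multiplicative, so splitting $m$ as a product of $\Delta$-invariant factors does not by itself reduce anything. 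Concretely, for $m = y_1\Delta(xy_2)y_3$ no cyclic rotation exposes an unbracketed $x$, and applying $\Delta$ to $m$ yields nothing of smaller degree.

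The missing ingredient is what the paper calls the \emph{ground block change}: write $m = m_1\Delta(p)m_2$ with $p$ not $\Delta$-invariant, use traciality to get $\Phi(m)=\Phi(\Delta(p)m_2m_1)$, and then combine $\Delta$-invariance of $\Phi$ with the conditional-expectation property to obtain
\[
\Phi\big(\Delta(p)\,m_2m_1\big)=\Phi\big(\Delta[\Delta(p)\,m_2m_1]\big)=\Phi\big(\Delta(p)\,\Delta(m_2m_1)\big)=\Phi\big(p\,\Delta(m_2m_1)\big).
\]
This transfers the outer bracket from $p$ to $m_2m_1$; iterating, one eventually produces a $\Delta$-monomial with an unbracketed $x$, and only then can your traciality-plus-SD step be applied. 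With this trick in place, your induction goes through and the rest of your plan (including the treatment of existence as a consistency check) is fine and agrees with the paper.
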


\begin{proof}[Proof of Lemma \ref{UniqSD}]
Assume first that such a $\Phi$ exists. Let $m \in \mbb C\langle \mbf x, \mbf y\rangle_\Delta$ but not in $\Dxy\langle \mbf y \rangle$, i.e. $m$ is of positive ground degree in a variable $x\in \mbf x$. Then $m = m_1 x m_2$ for $\Delta$-monomials $m_1$ and $m_2$. By traciality, $\Phi(m ) = \Phi(x m_2m_1)$, so the SD equation implies that $\Phi(m)$ is function of $\Delta$-moments of smaller ground degrees in $x$. By induction $\Phi(m)$ is a polynomial in $\Phi(m')$ for $m'$ of zero ground degree in $\mbf x$, i.e. $m'\in\Dxy\langle \mbf y \rangle$.

We now assume $m\in \Dxy\langle \mbf y \rangle \setminus \mcal D^{(\mbf y)}\langle \mbf y \rangle$: it has zero ground but positive full degree in $\mbf x$. Then $m = m_1 d m_2$ for $d \in \Dxy$ with positive full degree in $x$. We can write $d= \Delta(p)$ where $p$ is not $\Delta$-invariant and is obtained by removing the outer evaluation of $\Delta$. 

By traciality, $\Delta$-invariance and the property of the conditional expectation
\eqa\label{Eq:GBC}
	\Phi(m ) = \Phi\big( \Delta(p) m_2m_1\big) =  \Phi\big(p  \Delta( m_2m_1) \big) = \Phi( \tilde m)
\qea
where $\tilde m = p \Delta(m_2m_1)$.  We call this method \emph{ground block change} (see also Example \ref{example1GBC}).  If $p$ does not have positive ground degree in $\mbf x$ we iterate the ground block change, we get a $\Delta$-monomial with positive ground degree in some variable $x$. By the previous step, we write $\Phi(m)$ as a function of $\Delta$-moments of smaller full degrees in $\mbf x$. By induction, $\Phi(m)$ is a function of $\Phi(m')$  for $m'$ of zero full degree in $\mbf x$, i.e.  $m'\in \mbb C\langle \mbf y \rangle_\Delta$. 

The conclusion is that necessary $\Phi(m) = f_m\big(\Phi(m_1) \etc \Phi(m_K)\big)$ for a polynomial function $f_m$ and $m_1\etc m_K$ in  $\mbb C\langle \mbf y \rangle_\Delta$. One checks that this unique proposal indeed provides a solution of the Schwinger-Dyson equation.
 \end{proof}
 
 \begin{remark}\label{Rk:SDconv} For each $N$ let $\Phi_N$ be a tracial $\Delta$-invariant linear form on $\mbb C\langle \mbf x, \mbf y\rangle_\Delta$, such that its restriction on $\mbb C\langle \mbf y\rangle_\Delta$ converges and such that $\Phi_N [ x m  ] =\sum_{ m = \ell x r}\Phi_N[x^2] \Phi_N [\ell] \Phi_N [r] +o(1) $ for any $\Delta$-monomial $m$. Then the above proof shows that $\Phi_N(m) = f_m\big(\Phi_N(m_1) \etc \Phi_N(m_K)\big) +o(1)$, for the same $f_m$ and $m_k$'s, and so $\Phi_N$ converges and is limit is the unique solution the SD equation.
 \end{remark}

\begin{definition}\label{def:DeltaNC} Let $m$ be a $\Delta$-monomial obtained by bracketing a word $a_1 \cdots a_n$. We define inductively a partition $\sigma(m)$ of $[n]$. If $m=a_1 \cdots a_n$, i.e. $m$ is a monomial, then $\sigma(m)$ has a single block $[n]$. Otherwise, there is a subword $\Delta(a_{i}a_{i+1} \cdots a_{i+j})$ in $m$ with no internal bracket. Then $\{i,i+1\etc i+j\}$ is a block of $\sigma(m)$. We iterate this construction on the $\Delta$-monomial obtained by deleting $\Delta(a_{i}a_{i+1} \cdots a_{i+j})$ from $m$, until no bracket remains. The blocks constructed along the process define a partition $\sigma(m)$ of $[n]$. 
\end{definition}

The partition $\sigma(m)$ is non-crossing by the nesting characterization: a partition is non crossing whenever removing an inner block produces a non-crossing partition, see \cite[Remark 9.2]{NS06}. For any $\Delta$-monomial $m$ bracketing of a word $a_1 \cdots a_n$ and any block $B=\{i_1\etc i_k\}$ of $\sigma(m)$, we denote the monomial $m_B = a_{i_1} \cdots a_{i_k}$. A simple induction reasoning in the above construction shows that a family $\mbf y$ satisfies (H5) whenever for any  $\Delta$-monomial $m \in \mbb C\langle \mbf y \rangle_\Delta$,
\begin{enumerate}
	\item[{(H5')}]		$\qquad \Phi(m) = \prod_{B \in \sigma(m)} \Phi(m_B);$
\end{enumerate}
We use this observation, the ground block change method and the SD equation to justify the last  statement in Theorem \ref{MainTh1} that we reformulate in next lemma:

\begin{lemma}\label{Lem:Transitivity} Let $\Phi$ be a tracial $\Delta$-invariant linear form on $\mbb C\langle  \mbf x, \mbf y \rangle_\Delta$ solution of the SD equation \eqref{SDeq}. If  (H5') holds on $\mbb C\langle  \mbf y \rangle_\Delta$ then (H5') also holds on $ \mbb C\langle  \mbf x, \mbf y \rangle_\Delta$.
\end{lemma}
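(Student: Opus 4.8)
The plan is to combine the two mechanisms already used in the proof of Lemma~\ref{UniqSD} — reduction of ground degree in $\mbf x$ via the SD equation, and reduction of full degree in $\mbf x$ via the ground block change — but now keeping track of the partition $\sigma(m)$ rather than just numerical moments, and checking at each step that the factorization property (H5') is preserved. So the proof is an induction on the full degree of a $\Delta$-monomial $m\in\mbb C\langle\mbf x,\mbf y\rangle_\Delta$ in the variables $\mbf x$, the base case (zero full degree in $\mbf x$, i.e. $m\in\mbb C\langle\mbf y\rangle_\Delta$) being exactly the hypothesis (H5') on $\mbb C\langle\mbf y\rangle_\Delta$.

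First I would treat the case where $m$ has positive ground degree in some $x\in\mbf x$. Write $m=\ell\,x\,r$ as a product of $\Delta$-monomials and apply the SD equation \eqref{SDeq}: $\Phi(xm') = \sum_{m'=\ell' x r'}\Phi[x^2]\,\Phi(\ell')\Phi(r')$ with $m'=x^{-1}m$ read cyclically, as in the proof of Lemma~\ref{UniqSD}. Each summand is a product $\Phi[x^2]\Phi(\ell')\Phi(r')$ of $\Delta$-moments of strictly smaller full degree in $\mbf x$, to which the induction hypothesis applies, so each factor itself factorizes along its own partition. The combinatorial point to verify is that the blocks of $\sigma(m)$ are exactly recovered: the block of $\sigma(m)$ through the two occurrences of $x$ being distinguished splits into (the block of $\ell'$ containing one copy of $x$) $\cup$ (the block of $r'$ containing the other copy), joined through the pairing $\Phi[x^2]$, while all other blocks of $\sigma(m)$ sit untouched inside $\ell'$ or $r'$. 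This is the kind of bookkeeping that makes (H5') for $m$ follow from (H5') for the smaller monomials; it should be spelled out but is routine once the picture is drawn (it is essentially the $n=1$ "Leibniz"-type branching reflecting that $x$ lies in a singleton-looking position of a semicircular-type recursion).

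Next, the case where $m$ has zero ground degree but positive full degree in $\mbf x$: then $m=m_1\,\Delta(p)\,m_2$ with $p$ not $\Delta$-invariant and of positive full degree in $x$, and the ground block change \eqref{Eq:GBC} gives $\Phi(m)=\Phi(\tilde m)$ with $\tilde m=p\,\Delta(m_2m_1)$. Here I would check that $\sigma(m)$ and $\sigma(\tilde m)$ have the same block structure after the obvious relabelling — the block $\Delta(p)$ of $\sigma(m)$ is "opened" and merged with whatever was around it, but $p$ now being evaluated in the ground layer of $\tilde m$ its own inner blocks, together with the block $\Delta(m_2m_1)$, reconstitute the same collection of $m_B$'s up to cyclic reordering within blocks (harmless by traciality). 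If $p$ still has zero ground degree in $\mbf x$ one iterates the ground block change (it strictly decreases the "bracket depth" above an $\mbf x$-occurrence), eventually reaching a monomial of positive ground degree in some $x$, which is the previous case. Tracking the partition through each ground block change, and through the final SD step, then yields $\Phi(m)=\prod_{B\in\sigma(m)}\Phi(m_B)$.

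The main obstacle I expect is purely combinatorial rather than analytic: making the correspondence between $\sigma(m)$, the partitions of the monomials produced by one SD step, and the partitions of the monomials produced by a ground block change, precise enough that the product $\prod_B\Phi(m_B)$ on one side matches the product on the other — including the subtlety that $\Phi[x^2]$ must be absorbed correctly (recall in \eqref{SDeq} the coefficient is $\Phi[x^2]$, whereas in Example~\ref{example1GBC} one normalizes $\Phi(x^2)=1$), that cyclic reorderings inside blocks are invisible to $\Phi$, and that the variables $\mbf x$ are handled one occurrence-pair at a time so different $x_j$'s and different $\mbf y$-blocks never interfere. Once this dictionary is set up, (H5') for $\mbf y$ propagates to $(\mbf x,\mbf y)$ by the induction, and combined with the induction-characterization of (H5) via (H5') noted before the lemma, this gives the last statement of Theorem~\ref{MainTh1}.
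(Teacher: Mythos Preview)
Your plan is workable and uses the same two mechanisms as the paper (SD reduction and ground block change), but the paper organizes them differently and more economically. Rather than inducting on the full $\mbf x$-degree of $m$ and tracking $\sigma(m)$ through each step, the paper defines $\tilde\Phi(m):=\prod_{B\in\sigma(m)}\Phi(m_B)$ directly and then verifies that $\tilde\Phi$ itself satisfies the SD equation; uniqueness (Lemma~\ref{UniqSD}) then gives $\tilde\Phi=\Phi$ in one stroke. The only combinatorial fact needed is that a decomposition $m'=\ell\, x\, r$ of the $\Delta$-monomial restricts to a decomposition $m'_{GB}=\ell_{GB}\,x\,r_{GB}$ of its ground monomial, with the paddle blocks of $m'$ distributing between $\ell$ and $r$. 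This is the same observation your approach requires, but the paper invokes it once instead of threading it through an induction; the ground block change is likewise used only to note $\Phi(m)=\Phi(\tilde m)$ and $\tilde\Phi(m)=\tilde\Phi(\tilde m)$, with no further partition bookkeeping.

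One point in your outline deserves care. After applying SD to $\Phi(xm')$ and factorizing each $\Phi(\ell')$, $\Phi(r')$ by the induction hypothesis, you do \emph{not} recover $\prod_{B\in\sigma(xm')}\Phi(m_B)$ from any single summand. The paddle-block factors are common to all summands and can be pulled out, but the remaining factor $\sum_{m'=\ell' x r'}\Phi[x^2]\,\Phi(\ell'_{GB})\Phi(r'_{GB})$ must then be identified with $\Phi\big((xm')_{GB}\big)$ by a \emph{second} application of SD, this time to the ordinary monomial $(xm')_{GB}$. Your phrasing (``the block of $\sigma(m)$ through the two occurrences of $x$ \ldots\ splits into \ldots\ joined through the pairing $\Phi[x^2]$'') obscures this re-summation; without it the induction does not close. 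The paper's formulation avoids the issue by writing $\tilde\Phi(xm')=\Phi\big((xm')_{GB}\big)\prod_{B'\neq GB}\Phi(m_{B'})$ from the outset and expanding only the first factor.
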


Let $ m $ be a $\Delta$-monomial, bracketing of the word $a_1 \cdots a_n$. Writing $m =d_0 a_{j_1}d_1 \cdots d_{n'-1}a_{j_n}d_{n'}$ where $j_k\in [n]$ and $d_j \in \Dxy$, we then distinguish the block $\{j_1,j_2 \etc j_n\}$ of $\sigma(m)$ that we call the \emph{ground block} of $m$ (its cardinal is the ground degree of $m$). The other blocks of $\sigma(m)$ are called the \emph{paddle blocks}.

\begin{proof}[Proof of Lemma \ref{Lem:Transitivity}] We set $\tilde \Phi(m) = \prod_{B \in \sigma(m)}\Phi(m_B)$, for any $\Delta$-monomial $m$ and show $\tilde \Phi=  \Phi$ by proving it is solution of the Schwinger-Dyson equation. By assumption, the functions coincide in $\mbb C\langle \mathbf y \rangle_{\Delta}$, so let $m\in\mbb C\langle \mathbf x, \mathbf y \rangle_{\Delta}\setminus \mbb C\langle \mathbf y \rangle_{\Delta}$. Recall the  ground block change method, illustrated in \eqref{Eq:GBC}: there is a $\Delta$-monomial $\tilde m$, such that $\Phi(m) = \Phi(\tilde m)$ and $\tilde \Phi(m) = \tilde \Phi(\tilde m)$, with positive ground degree in a variable $x$. Hence we can always assume that $m$ has positive ground degree in a variable $x$. By traciality, we can assume $m = xm'$ for $x\in \mathbf x$ and $m' \in \mbb C\langle \mathbf x, \mathbf y \rangle_{\Delta}$. 

For any $\Delta$-monomial $p$, we set $p_{GB}$ the monomial associated to the ground block of $p$. Then we have $m_{GB}= x m'_{GB}$, and by Schwinger-Dyson equation for $\Phi$
	\eq
		\tilde \Phi(m)  
		=\sum_{m'_{GB} = \ell_{GB} x r_{GB}} \Phi(\ell_{GB} ) \Phi(r_{GB}) \prod_{\substack{ B'\in \sigma(m) \\ B'\neq GB(m)}}\Phi(m_B),
	\qe
where $GB(m)$ is the ground block of $m$. But for any decomposition of $m'_{GB}$ into a product $\ell_{GB} x r_{GB}$ corresponds a decomposition $m'=\ell x r$ where $\ell$ and $r$ are $\Delta$-monomials whose ground monomials are $\ell_{GB}$ and $r_{GB}$ respectively. The set of paddle blocks of $m'$ is the union of those of $\ell$ and of $r$. Hence
	$$\tilde \Phi(xm')  =\sum_{m' = \ell  x r}  \tilde \Phi(\ell ) \tilde \Phi(r),$$
and so $\tilde \Phi$ satisfies the Schwinger-Dyson, and by Lemma \ref{UniqSD} it coincides with $\Phi$. 
\end{proof}

	Recall that given families $\mbf a_\ell$, $\ell\in L$, in Definition \ref{def:An} we denote  by $\mcal A_1$ the smallest space such that $\mcal \mcal Alg(\mbf a_\ell, \Delta( \mcal A_1) ) \subset \mcal A_1$, $\forall \ell \in L$. The inductive construction of Definition \ref{def:DeltaNC} shows that $\mcal A_1$  is generated by $\Delta$-monomials $m$ evaluated in the $\mbf a_\ell$'s in such a way each $m_B$ is evaluated in one family $\mbf a_\ell$, $\forall B \in \sigma(m)$, i.e. each $m_B$ is \emph{non-mixing}. Recall   we also denote by $\mcal A_n, n\geq 2,$ the space of cyclically $\mcal D^{(\mbf a)}$-alternating products of $\Delta$-centered elements in the $\mbf a_\ell$'s. The following property is crucial to consider the characterization of $\Delta$-distributions in terms of marginals.

\begin{lemma}\label{WhyA1} With notations as above, for any tracial $\Delta$-invariant linear map  $\psi$ on $\mcal A$ and any  $m\in \mathbb C\langle \mbf a \rangle_\Delta$,  $\psi(m)$ is a linear combination of $\psi(q)$ for $q \in \mcal A_n$, $n\geq 1$.
\end{lemma}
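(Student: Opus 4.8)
The plan is to induct on a suitable complexity measure of the $\Delta$-monomial $m$ and repeatedly rewrite $\psi(m)$ using only the three structural moves available: traciality, $\Delta$-invariance of $\psi$, and the conditional-expectation property $\Delta(d_1 a d_2)=d_1\Delta(a)d_2$. Fix a $\Delta$-monomial $m\in\mathbb C\langle\mbf a\rangle_\Delta$, the bracketing of a word $b_1\cdots b_n$ with each $b_i$ a generator from some $\mbf a_{\ell_i}$. First I would reduce to the case where $m$ is \emph{non-mixing in each block}, i.e. every $m_B$, $B\in\sigma(m)$, lies in a single algebra $\mathbb C\langle\mbf a_\ell\rangle_\Delta$: if some block $B$ has two consecutive generators from the same family, merge them into one generator (this only shortens $n$); this is exactly the observation recorded just before the lemma, that $\mcal A_1$ is generated by the non-mixing $\Delta$-monomials.

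Next, I would peel off the bracketed sub-blocks. If $m$ is itself $\Delta$-invariant, write $m=\Delta(p)$, and by $\Delta$-invariance of $\psi$ we have $\psi(m)=\psi(\Delta(p))=\psi(p)$, so we may replace $m$ by a $\Delta$-monomial of strictly smaller full degree that has positive ground degree. Thus we may assume $m$ has positive ground degree, and by traciality we may cyclically rotate so that $m=b_1 m'$ with $b_1$ a genuine (un-bracketed) leading generator. Now I would split each ground-block generator $b_i$ that is \emph{not} already $\Delta$-centered as $b_i=(b_i-\Delta(b_i))+\Delta(b_i)$, distributing over the product; the term where every ground generator is $\Delta$-centered and the consecutive families genuinely alternate lands in $\mcal A_n$ (after possibly a further cyclic rotation to make it cyclically alternating, again using traciality), while every other term either has a strictly shorter ground block, or absorbs a factor $\Delta(b_i)\in\mcal D^{(\mbf a)}$ into a neighbouring coefficient, reducing the ground degree. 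Each such term is therefore handled by the induction hypothesis on a lexicographically smaller pair (ground degree, full degree). Collecting, $\psi(m)$ becomes a linear combination of $\psi(q)$ with $q\in\mcal A_n$, $n\geq 1$.

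The one point requiring care — and the main obstacle — is the bookkeeping that shows the complexity measure strictly decreases at every rewrite: merging same-family neighbours lowers $n$; removing an outer bracket via $\psi\circ\Delta=\psi$ lowers the full degree while preserving membership in $\mathbb C\langle\mbf a\rangle_\Delta$; and absorbing a $\Delta(b_i)$ into an adjacent coefficient via the conditional-expectation rule lowers the ground degree without creating a new outer bracket that could loop the argument. One must also check that when two ground generators from the \emph{same} family become adjacent after centering and absorption, the merge step is legitimate (it is, since $\mathbb C\langle\mbf a_\ell\rangle_\Delta$ is an algebra closed under $\Delta$), and that the cyclic rotations used to bring a term into cyclically-alternating form do not interfere with $\Delta$-centeredness (they do not, since rotation is implemented by traciality and permutes the $\Delta$-centered factors). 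Once the well-foundedness of the induction is in place, the statement follows with no further computation; in particular nothing is assumed about how $\psi$ behaves on $\mcal A_1$, which is precisely the freedom highlighted in Remark \ref{commentAn}.
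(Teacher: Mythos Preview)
Your centering-and-recursion strategy is essentially the paper's, but the induction as you set it up is not well-founded. The claim that removing an outer bracket via $\psi\circ\Delta=\psi$ lowers the full degree is false: the full degree of $\Delta(p)$ equals the full degree of $p$ (it is the length of the underlying word, unaffected by bracketing), so when your ground degree drops to zero and you strip a bracket, the ground degree jumps back up with no compensating decrease in the second coordinate of your lexicographic pair. Relatedly, your first paragraph conflates two different notions: ``non-mixing in each block'' means each $m_B$ lies in a single family, which is precisely membership in $\mcal A_1$ and hence the \emph{conclusion} you are after, not a preliminary reduction; the merging of adjacent same-family generators you describe produces instead a monomial whose blocks have \emph{alternating} families, which is the opposite situation.

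Concretely, the case you do not handle is a non-mixing ground block sitting under a mixing paddle block, e.g.\ $m=\Delta(a_1a_2)\,a_3$ with the $a_i$ in three distinct families. Centering the lone ground letter $a_3$ produces $\Delta(a_1a_2)\big(a_3-\Delta(a_3)\big)$, which lies in no $\mcal A_n$ (the inner block is mixing so it is not in $\mcal A_1$, and a single $\Delta$-centered factor is not a cyclically alternating product of length $\geq 2$), together with $\Delta(a_1a_2)\Delta(a_3)$, from which stripping the outermost bracket loops you back to $\psi(m)$. The paper's remedy is to induct on $\beta(m)$, the total degree of the \emph{mixing} blocks of $\sigma(m)$, and to use the ground block change $\psi\big(m_1\,\Delta(p)\,m_2\big)=\psi\big(p\,\Delta(m_2m_1)\big)$ to first rotate a mixing block into ground position; centering \emph{that} block then produces an element of $\mcal A_n$, $n\geq 2$, plus terms with strictly smaller $\beta$, and the recursion terminates.
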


Hence if $\mbf a_\ell, \ell \in L$ are free over $\mcal D$ then the $\Delta$-distribution $\Phi$ vanishes on $\mcal A_n$ for $n\geq 2$; it only remains to determine $\mcal A_1$. On the other hand if the collection of all $\mbf a_\ell$ satisfies (H5) then $\Phi$ is determined on $\mcal A_1$ by its restriction on monomials on each family $\mbf a_\ell$;  using (H5) again, if moreover the  $\mbf a_\ell$'s are free in the ordinary sense, then $\Phi$ also vanishes on $\mcal A_n$ for $n\geq 2$. By Lemma \ref{WhyA1}, the joint $\Delta$-distribution of the $\mbf a_\ell$'s is hence determined by the marginal $\Delta$-distribution.

Assume moreover that $\mbf a_\ell, \ell \in L$ are 2$^{nd}$ order $\Delta$-free. By Lemma \ref{WhyA1}  and by the orthogonality conditions, the 2$^{nd}$ order $\Delta$-distribution is determined 
\begin{itemize}
	\item \emph{(universal part)} on each $\mcal A_n,n\geq 2$ by the marginal 1$^{st}$ order distribution, via Mingo-Speicher formula and ordinary freeness;
	\item \emph{(non-nuniversal part)} the marginal 1$^{st}$ and 2$^{nd}$ order distribution via Leibnitz formula on $\mcal A_1$. 
\end{itemize}
\begin{proof}[Proof of Lemma \ref{WhyA1}]
Let $m$ be a $\Delta$-monomial in the variables $\mbf a_\ell,\ell\in L$ which is not in $\mcal A_1$: there is a block $B\in \sigma(m)$ such that $m_B$ in a $\Delta$-monomial in at least two different $\mbf a_\ell$'s. We set $\beta(m)\geq 2$ the sum of degrees of monomials $m_B$ in at least two different families of variables $\mbf a_\ell$'s, for $B\in \sigma(m)$, and we reason by induction on $\beta(m)$.

By ground block change we can assume that $B$ is the ground block of $m$. Then we write 
$m= r_0  s_1 r_1   \cdots s_n r_n $, where $n\geq 2$, each $s_k \in  \mcal D^{(\mbf a)}  \langle \mbf a_{\ell_k}\rangle$ is of the form $s_k = a m'_k = m''_k a'$ for $a,a'\in \mbf a_{\ell_k}$, and each $r_k$ is in $\mcal D^{(\mbf a)}$ i.e. is $\Delta$-invariant, and $\ell_k \neq \ell_{k+1}$.  Traciality implies that $\psi( m) = \psi( s_1r_1 \cdots s_n r_n r_0   )$, so we can assume $r_0=1$. If $j_n=j_1$ then with $r'_1=s_n r_ns_1$ we have $\psi( m ) = \psi( s'_1r_1 \cdots s_{n-1} r_{n-1} )$. So we can assume $j_n\neq j_1$. 

We operate to the usual reduction in the free product construction: we set $\tilde s_k =s_k - \Delta(s_k)$,  and $q=\tilde s_1   r_1 \cdots \tilde s_n   r_n$. Then $m = q+ q'$, where $q\in \mcal A_n$ and $  q'$ is a  linear combination of $\Delta$-monomials $m'$ such that $\beta(m')<\beta(m)$. By induction, $\psi(m)$ is a linear combination of $\psi(m')$ where $m'\in \bigcup_{n\geq 1} \mcal A_n$ 
\end{proof}

\section{First-order convergence: proof of Theorem 2.1}\label{Sec:FirstOrder}
	Let $\mbf X_N=(X_\ell)_{\ell\in L}$ and $\mbf Y_N=(Y_j)_{j \in J}$ be as in Theorem \ref{MainTh1}, and $\mbf x=(x_\ell)_{\ell\in L},$ $\mbf y=(y_j)_{j \in J}$ be indeterminates. For each Wigner matrix $X_N=\frac1 {\sqrt N}\big( {x_{ij}}   \big)_{ij}$, we assume that $\esp\big[|x_{12}|\big]=1$ and we assume that $\mbf Y_N$ is closed by adjoint. We prove the convergence of 
	$\Phi_N(q):=\esp\big[ \frac 1 N \Tr \, q(\mbf X_N, \mbf Y_N) \big]$.  Convergence almost sure and in probability follow easily from variance estimates implied by Theorem \ref{MainTh2} and Borel-Cantelli lemma, see for instance the proof of \cite[Theorem 1.13]{Gui09}. Since the trace is tracial and invariant under $\Delta$, we can assume $q$ is $\Delta$-invariant. 
\begin{enumerate}
	\item Next section in Proposition \ref{Prop:AsympIndep} we recall an asymptotic formula based on \emph{traffic independence}. It does not imply the convergence of $\Phi_N(q)$ since it involves quantities bounded but not necessarily convergent.
	\item In  Section \ref{Sec:SDeq} we prove that $\Phi_N$ satisfies asymptotically the SD equation. This implies Theorem \ref{MainTh1} thanks to the preliminary results.
\end{enumerate}

\subsection{Recall of asymptotic first-order formulas}\label{Sec:FirstOrderRecall}

We call \emph{test graph} a triplet $T=(V,E,\gamma)$, where $(V,E)$ is a finite directed graph $(V,E)$ and $\gamma$ is a labelling map from $E$ to a label set. The graph may have loops and multiple edges, so that $V$ is a set and $E$ is a multiset of ordered pair of elements of $V$. The map $\gamma$ indicates that the edge $e$ is labeled by the variable $x_{\gamma(e)}$. Test graphs are given with a partition $E=E_X\sqcup E_Y$ of the edge set: a labelling $\gamma(e)$ of $e\in E_X$ is an element of $L$ which refers to the variable $x_{\gamma(e)}$, we say with small abuse that $e$ is \emph{labeled by the Wigner matrix $X_{\gamma(e)}$}; a labelling $\gamma(e)$ of $e\in E_Y$ belongs to $J$ and refers to $y_{\gamma(e)}$, $e$ is \emph{labeled by the deterministic matrix $Y_{\gamma(e)}$}. 

The \emph{trace of a test graph $T=(V,E,\gamma)$ in the matrices $\mbf X_N, \mbf Y_N$} is the scalar
	\begin{eqnarray}\label{Def:Trace}
	\lefteqn{\Tr \big[ T(\mbf X_N, \mbf Y_N) \big] } \nonumber \\
	 & =  & \sum_{\phi: V\to [N]} \prod_{e=(v,w)\in E_X} X_{\gamma(e)}\big( \phi(w), \phi(v) \big) \times \prod_{e = (v,w) \in E_Y} Y_{\gamma(e)}\big( \phi(w), \phi(v)\big).
	\end{eqnarray}

\begin{definition} A \emph{well-oriented (w.o.) simple cycle} is a cycle with no repetition of vertices whose edges follow a same orientation. A \emph{w.o. cactus} is a test graph such that each edge belongs to a unique simple cycle and all simple cycles are well-oriented. 
\end{definition}
Let $m$ be a $\Delta$-monomial obtained by bracketing a word $a_1 \cdots a_n$, where $a_i \in \mbf x \cup \mbf y$. We associate inductively a w.o. cactus $T_m$ in the similar way we have constructed the non crossing partition $\sigma(m)$. Let $T^{(0)}$ be the simple directed cycle with $n$ edges labeled  $\cdots \overset{a_{i-1}}\leftarrow \cdot  \overset{a_i}\leftarrow  \cdot \overset{a_{i+1}}\leftarrow  \cdots $.  If $m=a_1 \cdots a_n$, i.e. $m$ is a monomial, then $T_m = T^{(0)}$. Otherwise, there is a subword $\Delta(a_{i}a_{i+1} \cdots a_{i+j})$ in $m$ with no internal bracket. Let $T^{(1)}$ be the graph obtained from $T^{(0)}$ by identifying the target of the $i$-th edge and the source of $(i+j)$-th one. Then we iterate this construction on the cycle of $T^{(1)}$ that do not contain the edges labeled $a_{i},a_{i+1} \cdots a_{i+j}$. We thus construct a sequence $T^{(k)},k=0,1,\dots$ of graphs until a step $k_0$ for which there is not bracketed expression remaining. We then set $T^{(k_0)}=T_m$. The edges are canonically labeled by the variables of the monomial. 

By construction, $T_m$ is a w.o. cactus and its cycles are correspondence with the blocks of the non-crossing partition $\sigma(m)$.  One checks that the trace of $m$ is the trace of the graph $T_m$ in the matrices in the sense of \eqref{Def:Trace}, namely
	\eqa\label{Eq:TrTrTest}
		\Tr \, m(\mbf X_N, \mbf Y_N) = \Tr  \big[T_m(\mbf X_N, \mbf Y_N) \big].
	\qea
For any test graph $T$, the \emph{injective trace of $T$ in the matrices $\mbf X_N, \mbf Y_N$} is the quantity $\Tr^0 \big[ T(\mbf X_N, \mbf Y_N) \big] $ defined as in \eqref{Def:Trace} but where the sum is restricted to injective maps $\phi:V\to [N]$.
Let $T=(V,E,\gamma)$ be a test graph. For any $\pi \in \mcal P(V)$, let $T^\pi$ be the graph obtained by identifying vertices in a same block of $\pi$. We say that $T^\pi$ is a quotient of $T$. Then we have relation \cite[Section 2.1]{Mal11}: for any test graph $T$ 
	\eqa\label{Eq:TrTestTrInj}
		 \Tr \, T(\mbf X_N, \mbf Y_N) = \sum_{\pi \in \mcal P(V) } \Tr^0  \big[T^\pi(\mbf X_N, \mbf Y_N) \big].
	\qea

In this paragraph and in Definition \ref{GCC}, we fix a $\Delta$-monomial $m$ and denote $T_m =T= (V,E,\gamma)$ its w.o. cactus. For $\pi \in \mcal P(V)$,  let $T^\pi_X = (V^\pi_X,E^\pi_X, \gamma^\pi_X)$ be the subgraph of $T^\pi$ whose edges are labeled by Wigner matrices. Let $T^\pi_Y = (V^\pi_Y,E^\pi_Y, \gamma^\pi_Y)$ be defined similarly with deterministic matrices and let $C_Y(T_m^\pi)$ be the set of connected components of $T^\pi_Y$. We define the weight associated to $\mbf X_N$
	\eqa\label{Eq:OmegaX1}
		\omega_X^{(1)}(\pi) = \esp\Big[ \prod_{e=(v,w) \in E^\pi_X} \sqrt{N} X_{\gamma(e)}\big( \psi(w), \psi(v) \big) \Big]
	\qea
for any choice of $\psi:V^\pi_X\to [N]$ injective (permutation invariance of $\mathbf X_N$ implies that this quantity does not depend on $\psi$), and the weight associated to $\mbf Y_N$
	\eqa\label{Eq:OmegaY}
	\omega_Y(\pi) = \frac 1 {N^{|\mcal C_Y(T^\pi)|}} \Tr^0 \big[ T^\pi_Y(\mbf Y_N) \big].
	\qea
By \eqref{Eq:TrTrTest}, \eqref{Eq:TrTestTrInj} and the permutation invariance of $\mbf X_N$, we have \cite{MMPS}
	\eqa\label{Eq:FirstOrderExact}
		\Phi_N(m) :=\esp\Big[ \frac 1 N \Tr \, m(\mbf X_N, \mbf Y_N) \Big] =  
		\sum_{\pi \in \mcal P(V)} N^{q^{(1)}(\pi)}\omega^{(1)}_X(\pi) \omega_Y(\pi),
	\qea
where $q^{(1)}(\pi)=-\frac {|E_X|}2 -1 + |\mcal C_Y(T)|$. Mingo Speicher's bounds for the trace of graphs \cite{MS12} then allows us to replace $N^{q^{(1)}(\pi)}$ by an indicator function for $N$ large. 
	
\begin{definition}\label{GCC}
\begin{enumerate}
	\item A tree is a connected graph (directed or not) such that the removal of an arbitrary edge always disconnects the graph.
	\item Fix $\pi \in \mcal P(V)$. The \emph{graph of deterministic components of $T^\pi$} is the undirected graph  $\mcal {GDC}(T^\pi)= (\mcal V, \mcal E)$, where
\begin{itemize}
	\item the vertex set $\mcal V$ is the disjoint union of the vertex set of $T^\pi$ and of the set $\mcal  C_{Y}(T^\pi)$ of connected components of the subgraph of $T^\pi$ of edges labeled by deterministic matrices,
	\item the edge set $\mcal E$ is the union of $E^\pi_X$ and of the set of pairs $\{v,S\}$ where $c\in V$ belongs to the subgraph $S\in \mcal C_{Y}(T^\pi)$. 
\end{itemize}
	\item We say that $T^\pi$ is of \emph{double-tree type} and denote $T^\pi\in \boldsymbol{ \mcal{DT}}$ if the graph $\overline{\mcal {GDC}}(T^\pi)$ obtained from $\mcal {GDC}(T^\pi)$ by forgetting edge multiplicity is a tree, the edges of $\mcal {GDC}(T^\pi)$ labeled in $\mbf x$ have multiplicity 2, and edges forming a group of multiplicity 2 are labeled by a same variable $x$.
\end{enumerate}
\end{definition}

\begin{proposition}\label{Prop:AsympIndep} Under (H1) and (H3), we have the first-order asymptotic formula: for all $\Delta$-monomials $m$ and all partitions $\pi$ of the vertices of  $T_m$,
$$	 N^{q^{(1)}(\pi)}\omega^{(1)}_X(\pi) \omega_Y(\pi)  =  \one\big( T_m^\pi \in \boldsymbol{ \mcal{DT}}\big) \prod_{ C \in \mcal C_Y(T^\pi)} \frac 1 N \Tr^0 \big[ C(\mbf Y_N) \big] +o(1),$$
and these quantities are uniformly bounded when $N$ goes to infinity. 
\end{proposition}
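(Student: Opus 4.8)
The starting point is the exact identity \eqref{Eq:FirstOrderExact}, which already isolates, for each partition $\pi$ of the vertices of $T_m$, the summand $N^{q^{(1)}(\pi)}\omega^{(1)}_X(\pi)\omega_Y(\pi)$ whose size must be controlled. The plan is to estimate the Wigner weight $\omega^{(1)}_X(\pi)$ and the deterministic weight $\omega_Y(\pi)$ separately, and then to run an Euler-characteristic power count that matches the surviving partitions with the double-tree condition $\boldsymbol{\mcal{DT}}$ of Definition \ref{GCC}.

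First I would handle $\omega^{(1)}_X(\pi)$ of \eqref{Eq:OmegaX1}. Since distinct Wigner matrices are independent, and within a single Wigner matrix distinct entries are independent and all entries are centered with finite moments of all orders (Definition \ref{Def:Wigner}), the expectation in \eqref{Eq:OmegaX1} factorizes over the \emph{parallel classes} of $E^\pi_X$ --- the maximal sets of edges of $T^\pi$ joining the same unordered pair of vertices and carrying the same label --- and it vanishes unless every parallel class contains at least two edges. Hypothesis (H1) ensures the value is independent of the chosen injective labelling $\psi$, and the bounded moments give $|\omega^{(1)}_X(\pi)|\le C(m)$ uniformly in $N$. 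A parallel class of size exactly $2$ contributes $\esp[x_{12}^2]$ or $\esp[|x_{12}|^2]$ according to the relative orientation of its two edges; the point (and the reason (H2) plays no role here) is that in the well-oriented cactus $T_m$ the leading identifications are non-crossing, so whenever a quotient reaches $\boldsymbol{\mcal{DT}}$ the two edges of each pair are traversed in opposite directions and contribute the variance, which the normalization fixes to $1$.

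Next I would bound $\omega_Y(\pi)=N^{-|\mcal C_Y(T^\pi)|}\Tr^0[T^\pi_Y(\mbf Y_N)]$ of \eqref{Eq:OmegaY}. Using (H3) together with the (easy half of the) trace-of-graphs bound of \cite{MS12}, the injective trace of a \emph{connected} test graph built from matrices of uniformly bounded operator norm is $O(N)$; splitting $\Tr^0[T^\pi_Y(\mbf Y_N)]$ over the connected components of $T^\pi_Y$ and discarding the coincidences forced between vertices of distinct components yields $\Tr^0[T^\pi_Y(\mbf Y_N)]=\prod_{C\in\mcal C_Y(T^\pi)}\Tr^0[C(\mbf Y_N)]+O(N^{|\mcal C_Y(T^\pi)|-1})$. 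Hence $\omega_Y(\pi)=\prod_{C}\frac1N\Tr^0[C(\mbf Y_N)]+o(1)$, and $|\omega_Y(\pi)|$ is bounded uniformly in $N$.

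Finally, the power count. Combining the two estimates, $|N^{q^{(1)}(\pi)}\omega^{(1)}_X(\pi)\omega_Y(\pi)|\le C(m)\,N^{q^{(1)}(\pi)}$ on the event that every parallel class of $E^\pi_X$ has size $\ge2$, and the term vanishes otherwise. On this event the number of distinct $\mbf x$-labelled edges of the auxiliary graph $\overline{\mcal{GDC}}(T^\pi)$ is at most $|E_X|/2$; since $\overline{\mcal{GDC}}(T^\pi)$ is connected (because $T_m$ is a cactus), Euler's inequality $|\mcal V|\le|\overline{\mcal E}|+1$ then forces $q^{(1)}(\pi)\le0$, with equality in both bounds simultaneously exactly when $\overline{\mcal{GDC}}(T^\pi)$ is a tree, each $\mbf x$-edge has multiplicity exactly $2$, and each doubled pair carries a common variable --- i.e. when $T_m^\pi\in\boldsymbol{\mcal{DT}}$. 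When $T_m^\pi\notin\boldsymbol{\mcal{DT}}$ one has $\omega^{(1)}_X(\pi)=0$ or $q^{(1)}(\pi)\le-1$, so the summand is $o(1)$; when $T_m^\pi\in\boldsymbol{\mcal{DT}}$ one has $q^{(1)}(\pi)=0$, every parallel class has size exactly $2$ so $\omega^{(1)}_X(\pi)=1$, and the summand equals $\prod_{C\in\mcal C_Y(T^\pi)}\frac1N\Tr^0[C(\mbf Y_N)]+o(1)$; in all cases it is $O(1)$, which gives both the formula and the uniform boundedness. I expect this last step to be the main obstacle: showing that all the slack in Euler's inequality is used up exactly by the double-tree condition requires keeping careful simultaneous track of the vertex identifications made by $\pi$, the enforced doubling of $\mbf x$-edges, the contraction of each $\mbf y$-component of $T^\pi$ to an auxiliary vertex of $\mcal{GDC}(T^\pi)$, and the vertices incident to no $\mbf y$-edge; everything else is the standard Wigner moment method together with operator-norm bounds.
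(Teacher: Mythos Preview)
The paper does not give a self-contained proof of this proposition; it only states that ``the heuristic \ldots\ is presented in \cite{MMPS} and the result is an immediate consequence of the asymptotic traffic independence theorem of \cite{Mal17} with the limiting traffic distribution of \cite[Section 3.1]{Mal11}.'' Your moment-method/Euler-characteristic strategy is exactly the standard argument underlying those references, and the overall architecture (factor $\omega_X^{(1)}$ over parallel classes, bound $\omega_Y$ via \cite{MS12}, then count powers of $N$ on $\mathcal{GDC}(T^\pi)$) is correct.

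There is, however, one genuine gap. Your assertion that ``the injective trace of a \emph{connected} test graph built from matrices of uniformly bounded operator norm is $O(N)$'' is not what \cite{MS12} says, and it is false in general: take a star with center $c$ and $k\ge 3$ leaves, all edges labelled by the rank-one matrix $A=N^{-1/2}\,\mathbf 1\,e_1^{\,t}$ (so $\|A\|=1$); then the injective trace is of order $N^{k/2}$. The sharp Mingo--Speicher bound gives $O(N)$ only for \emph{two-edge-connected} components, and $T^\pi_Y$ can contain bridges. What actually rescues the argument is a parity/compensation mechanism you have not invoked: since $T_m$ is a cactus, every vertex of $T^\pi$ has even total degree; hence if some vertex has odd $Y$-degree (as any leaf of a bridge in $T^\pi_Y$ must), its $X$-degree is odd, forcing at least one incident $X$-parallel-class of size $\ge 3$. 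Each such ``excess'' $X$-edge lowers $q^{(1)}(\pi)$ by $\tfrac12$, which exactly offsets the extra $N^{1/2}$ that each leaf of the block-tree of $T^\pi_Y$ contributes to the Mingo--Speicher bound. In particular, in the $\boldsymbol{\mcal{DT}}$ case all $X$-multiplicities are exactly $2$, so every vertex of $T^\pi_Y$ has even degree, hence each $Y$-component is bridgeless and your $O(N)$ bound \emph{does} apply there; but in the non-$\boldsymbol{\mcal{DT}}$ case you need the full trade-off, not just $q^{(1)}(\pi)\le -1$ together with a uniform $\omega_Y(\pi)=O(1)$.

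A second, smaller point: your remark that twin edges in a $\boldsymbol{\mcal{DT}}$ quotient are oppositely oriented because ``the leading identifications are non-crossing'' is correct but underspecified. The clean argument (which the paper gives in Section~\ref{Sec:SDeq}) is connectivity: removing a twin pair must disconnect $T^\pi$, and since $T_m$ is a well-oriented cactus this forces the two edges to lie on the same simple cycle of $T_m$ with opposite orientations.
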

The heuristic of the proposition is presented in \cite{MMPS} and the result is an immediate consequence of the asymptotic traffic independence theorem of \cite{Mal17} with the limiting traffic distribution of \cite[Section 3.1]{Mal11}. 

\subsection{Proof of the asymptotic SD equation}\label{Sec:SDeq}
We recall that we denote $\Phi_N: m \mapsto \esp\big[ \frac 1 N \Tr \, m(\mbf X_N, \mbf Y_N) \big] $. By Remark \ref{Rk:SDconv}, to prove Theorem \ref{MainTh1} we shall prove: for all $\Delta$-monomial $m$ and all $x\in \mbf x$, then $\Phi_N [ x m  ] =\sum_{ m = \ell x r} \Phi_N(\ell) \Phi_N(r)+o(1),$ where the sum is over all decompositions of $m$ as a product of $\Delta$-monomials.

We shall use the following general observation. Let $m \in \mathbb C\langle \mbf x, \mbf y \rangle_\Delta$ be $\Delta$-invariant. With same notations as in previous section, we denote $T=T_m$ the w.o. cactus associated to $m$ and $V=V_m$ its vertex set. Let $\pi\in \mcal P(V)$ such that the quotient $T^\pi$ is of double tree type. By definition, an edge $e$ labeled $x$ in $T^\pi$ is hence grouped with another edge $e'$ of same label to form a double edge. We say in short that $e$ and $e'$ are \emph{twin edges}. We claim that \emph{twin edges  $e$ and $e'$ in $T^\pi$ always belong to a same simple cycle in $T$}. Indeed, since $T^\pi\in \boldsymbol{ \mcal{DT}}$, the graph $T^\pi_{\setminus e,e'}$ obtained by removing $e$ and $e'$ from $T^\pi$ is disconnected. Contrariwise, if $e,e'$ are not in a same cycle of $T$, the graph $T_{\setminus e,e'}$, obtained by removing $\{e,e'\}$ from $T$, is connected. But $T^\pi_{\setminus e,e'}$ is a quotient of $T_{\setminus e,e'}$ and taking a quotient cannot increase the number of connected components. The same connectivity argument shows that \emph{twin edges have opposite orientation} since removing $e,e'$ must disconnect $T^\pi$: the source of $e$ is the target of $e'$ and reciprocally.

Let now prove the asymptotic SD equation. Let $m$ be a $\Delta$-monomial of the form $m=xm'$, and let $T=T_m$ be its w.o. cactus. Denote by $e$ the edge of $T$ associated to the first letter $x$ of the word of $xm'$. The edges $e'\neq e$ in the same cycle as $e$ with label $x$ are in correspondence with the decompositions $m' = \ell x r$. 

We fix the edge $e'$ which forms a twin edge with $e$, and so a decomposition $m'=\ell x r$. Let $\pi\in \mcal P(V)$ such that $T^\pi\in \boldsymbol{ \mcal{DT}}$ and $e,e'$ are twin edges. Then $T^\pi$ is a quotient of the graph $T_{e\sim e'}$ obtained from $T$ by identifying the source of $e$ and the target of $e'$ and reciprocally. Note that $T_{e \sim e'}$ consists in the two disjoint graphs $T_\ell$ and $T_r$ (defined as $T_m$ with $\ell,r$ replacing $m$) linked by the double edge $\{e, e'\}$.  
Seeing $T^\pi$ as a quotient of $T_{e\sim e'}$, there is no identification of vertices of different subgraphs $T_\ell$ and $T_r$, by the same connectivity argument as before (removing $e,e'$ must disconnect $T^\pi$). Hence the set of $\pi \in \mcal P(V)$ such that $T^\pi\in \boldsymbol{ \mcal{DT}}$ and $e,e'$ are twin edges is in correspondence with the set of couples  $(\pi_\ell,\pi_r) \in \mcal P(V_\ell) \times \mcal P(V_r)$ such that $T_\ell^{\pi_\ell},T_r^{\pi_r}\in \boldsymbol{ \mcal{DT}}$ ($V_\ell, V_r$ denote the vertex sets of $T_\ell,T_r$). So we have 
	\eq
		\lefteqn{\sum_{ \substack{ \pi \in \mcal P(V) \\ e \sim e'}} \one\big( T_m^\pi \in \boldsymbol{ \mcal{DT}}\big) \prod_{ C \in \mcal C_Y(T^\pi_m)} \frac 1 N \Tr^0 \big[ C(\mbf Y_N) \big]}\\
		&  = & \prod_{s \in \{ \ell, r\}} \sum_{ \pi_s \in \mcal P(V_s) } \one\big( T_s^\pi \in \boldsymbol{ \mcal{DT}}\big) \prod_{ C \in \mcal C_Y(T_s^{\pi_s})} \frac 1 N \Tr^0 \big[ C(\mbf Y_N) \big].
	\qe
By Proposition \eqref{Prop:AsympIndep}, we get the claimed result:
	$$\esp\Big[ \frac 1 N \Tr \big[ m(\mbf X_N,\mbf Y_N) \big] \Big] = \sum_{ m' = \ell x r } \esp\Big[ \frac 1 N \Tr \big[ \ell(\mbf X_N,\mbf Y_N) \big] \Big]  \esp\Big[ \frac 1 N \Tr \big[ r(\mbf X_N,\mbf Y_N) \big] \Big] +o(1).$$

\section{Second-order convergence}\label{Sec:SecondOrder}

\subsection{Recall of asymptotic formulas}\label{Sec:RecallSecondOrder}

We consider $\mbf X_N=(X_\ell)_{\ell\in L}$ and $\mbf Y_N=(Y_j)_{j \in J}$ as in Theorem \ref{MainTh2}. As in the previous section, we assume the Wigner matrices normalized and the family of deterministic matrices closed by adjoint. For two $\Delta$-polynomials $q_1$ and $q_2$, we denote $\Phi_N^{(2)}(q_1,q_2)$ the quantity
	$$ \esp\bigg[   \Big( \Tr \, q_1(\mbf X_N, \mbf Y_N) - \esp\big[ \Tr \, q_1(\mbf X_N, \mbf Y_N) \big] \Big)  \Big( \Tr \, q_2(\mbf X_N, \mbf Y_N) - \esp\big[ \Tr \, q_2(\mbf X_N, \mbf Y_N) \big] \Big)\bigg].$$

\begin{enumerate}
	\item We recall from \cite{MMPS} asymptotic formulas for the 2$^{nd}$ order distribution on $\Delta$-monomials, in a similar way as for first-order formulas. They involve the fourth moment of the normalized entries of Wigner matrices. 
	\item The novelty compared to the first-order convergence is that we are now considering the $\Phi_N^{(2)}$ on alternating products of reduced elements. We show in Section \ref{Sec:TraceApre} how this discards the dependence in the fourth moment. We also prove there the orthogonality of the spaces $E_n$, $n\geq 1$. 
	\item Section \ref{Sec:ProofMainTh4} is devoted to the proof of Mingo-Speicher formula. In Lemma \ref{SecondOrderSD2}, we prove that $\Phi^{(2)}$ satisfies a variant of SD equation. Comparing them with similar equations satisfied by $\Phi$, we deduce (ii).
	\item Section \ref{Sec:ProofMainTh3} is devoted to the proof of (iii), i.e. the Leibniz property. 
\end{enumerate}

Without loss of generality, we assume that $\mbf Y_N$ contains the identity matrix and is closed by $\Delta$-monomial ($q(\mbf Y_N) \in \mbf Y_N$ for each $q \in \mbb C \langle \mbf y \rangle_\Delta$). We first consider two words of the form 
	\eq
		m_1 &=& x_{\ell_1}y_{j_1}\cdots x_{\ell_p}y_{j_p},\\
		m_2 &=  &x_{\ell_{p+1}}y_{j_{p+1}} \cdots x_{\ell_{p+q}} y_{j_{p+q}}.
	\qe
We denote by $T_1=T_{m_1}$ and $T_2=T_{m_2^*}$ the test graphs (c.f. Section \ref{Sec:FirstOrderRecall}) associated to $m_1$, ${m_2^*}$ respectively, and 
we set $T=T_{m_1}\sqcup T_{m_2^*}= (V,E,\gamma)$ which hence consists of the disjoint union of the two cycles.

Then we consider two $\Delta$-monomials $p_1$ and $p_2$  obtained by bracketing the expressions $m_1$ and $m_2$ respectively. We denote by $T_{p_1}$ and $T_{p_2^*}$ the w. o. cacti associated to $p_1$ and ${p_2}^*$  respectively: they are quotients of $T_{m_1}$ and $T_{m_2^*}$ respectively. We denote by $\nu_i$ the partition of the vertex set of $T_i$ such that $T_{p_1}=T_1^{\nu_1}$, $T_{p_2^*}=T_2^{\nu_2}$. We set $\nu = \nu_1\sqcup \nu_2 \in \mcal P(V)$, so that $T_{p_1}\sqcup T_{p_2^*}=T^\nu$. 

Let $\pi\in \mcal P(V)$ be an arbitrary partition and consider the quotient $T^\pi$ of the union of cycles $T$. The restriction of $\pi$ on the vertex sets of each cycle defines two subgraphs $T_1^\pi$ and $T_2^\pi$ of $T^\pi$. We set $T^\pi_{X,i}=(V_{X,i}^\pi,E_{X,i}^\pi, \gamma_{X,i}^\pi)$, $i=1,2$, the subgraph of $T_{i}^\pi$ whose edges are labeled by $\mbf x$, and $T_Y^\pi$ the subgraph of $T^\pi$ whose edges are labeled by $\mbf y$. We set the weights of 2$^{nd}$ order associated to Wigner matrices
	\eqa\label{Eq:OmegaX2}
		\omega_X^{(2)}(\pi) &=& \esp\Big[ \prod_{\substack{e=(v,w)\\ \in E^\pi_{X,1}\cup E^\pi_{X,2} }} \sqrt{N} X_{\gamma_X(e)}\big( \psi(w), \psi(v) \big) \Big]\\
		\nonumber & & \ \  - \prod_{i=1,2}\esp\Big[  \prod_{\substack{e=(v,w)\\ \in E^\pi_{X,i} }} \sqrt{N} X_{\gamma_X(e)}\big( \psi(w), \psi(v) \big) \Big]
	\qea
for any injective map $\psi: V^\pi\to [N]$,
and $\omega_Y(\pi)=  \frac 1 {N^{|\mcal C_Y(T^\pi)|}}  \Tr^0 \big[ T^\pi_Y(\mbf Y_N) \big]$ defined as in \eqref{Eq:OmegaY} with the new definition of $T=T_{ m_1}\sqcup T_{m_2^*}$. Then, by permutation invariance of $\mbf X_N$, we have the formula, proved in \cite[Section 2.1-2.4]{MMPS},
	\eqa \label{Eq:SeparationTermsMonomials}
		\Phi_N^{(2)}(m_1,m_2^*) = \sum_{\pi \in \mcal P(V)} N^{q^{(2)}(\pi)} \omega^{(2)}_X(\pi) \omega_Y(\pi),
	\qea
where $q^{(2)}(\pi)=-\frac{|E_1^\pi| + |E_2^\pi|}2 +|\mcal C_Y(T^\pi)|$ and $\mcal C_Y(T^\pi)$ is the set of connected components of $T_Y^\pi$. Denote by $\mcal P^\nu(V) \subset \mcal P(V)$ the set of partitions $\pi \geq \nu$, so $\pi \in \mcal P^\nu(V)$ whenever $T^\pi$ is a quotient of $T_{p_1} \sqcup T_{p_2^*}$. The same proof as for \eqref{Eq:SeparationTermsMonomials} yields 
	\eqa \label{Eq:SeparationTerms}
		\Phi_N^{(2)}(p_1,p_2^*) = \sum_{ \pi \in \mcal P^\nu(V)} N^{q^{(2)}(\pi)} \omega^{(2)}_X(\pi) \omega_Y(\pi).
	\qea
In  \cite[Section 2.1-2.4]{MMPS}, we give an asymptotic formula $N^{q^{(2)}(\pi)} \omega^{(2)}_X(\pi) \omega_Y(\pi)$ for any $\pi \in \mcal P(V)$. We can hence use it to compute $\Phi_N^{(2)}(p_1,p_2^*)$.

\begin{definition}
\begin{enumerate} Let $T^\pi$ be a quotient of $T=T_{ m_1}\sqcup T_{m_2}$.
	\item We say that $T^\pi$ is of \emph{double-unicyclique type} and denote $T^\pi \in \boldsymbol{ \mcal{DU}}$ if 
	\begin{itemize}
		\item the graph $\overline{\mcal {GDC}}(T^\pi)$, given from $\mcal {GDC}(T^\pi)$ by forgetting edge multiplicity (multip.), has a unique simple cycle, the edges of $\mcal {GDC}(T^\pi)$ labeled in $\mbf x$ have multip. 2, and twin edges are labeled by a same variable $x$ and have opposite orientation,
		\item each graph $\overline{\mcal {GDC}}(T^\pi_{i}), i=1,2$, has a unique cycle, edges labeled $\mathbf x$ on the cycle are of multip. 1, the other are of multip. 2.
		\item $T^\pi$ is obtained from $T^\pi_{1}$ and $T^\pi_{2}$ by identifying the 2 cycles.
	\end{itemize}
	
	\item We say that $T^\pi$ is of \emph{4-2 tree type} and denote $T^\pi \in \boldsymbol{ \mcal{FT}}$ if 
	\begin{itemize}
		\item the graph $\overline{\mcal {GDC}(T^\pi)}$ is a tree, all edges of $\mcal {GDC}(T^\pi)$ labeled in $\mbf x$ have multip. 2 but one group of edges have multip. 4, and edges forming a group of multip. 2 or 4 are labeled by a same variable $x$,
		\item the graphs $T^\pi_{1}$ and $T^\pi_{2}$ are of double tree type. 
		\item $T$ is obtained from $T^\pi_{1}$ and $T^\pi_{2}$ by forming a group of edges of multip. 4 from a group of edges of multip. 2 in $T^\pi_{1}$ and in $T^\pi_{2}$. 
	\end{itemize}
\end{enumerate}
\end{definition}

\begin{proposition}\label{Prop:SecondAsympIndep} \cite[Section 3]{MMPS} Under Hypotheses (H1), (H2) and (H4)
	\eqa
		\nonumber \lefteqn{ N^{q^{(2)}(\pi)} \omega^{(2)}_X(\pi) \omega_Y(\pi) }\\
		\label{Eq:2ndAsympIndep}&=&  \one\big(T^\pi \in \boldsymbol{ \mcal{DU}}\cup \boldsymbol{ \mcal{FT}} \big)\omega_X^{(2)}(\pi) \prod_{ C \in \mcal C_Y(T^\pi)} \frac 1 N \Tr^0 \big[ C(\mbf Y_N) \big] +o(1).
	\qea
If moreover $\Phi^{(2)}_N(p_1,p_2^*)$ converges for any $p_1,p_2$, then the process $\mbf Z_N$ of Theorem \ref{MainTh2} converges to a Gaussian process.
\end{proposition}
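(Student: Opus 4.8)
The statement is essentially the content of \cite[Section 3]{MMPS}; here is the route I would follow. One starts from the exact expansion \eqref{Eq:SeparationTerms}, namely $\Phi_N^{(2)}(p_1,p_2^*)=\sum_{\pi\in\mcal P^\nu(V)}N^{q^{(2)}(\pi)}\omega_X^{(2)}(\pi)\omega_Y(\pi)$, and estimates each summand. The $Y$-factor requires no work: since the elements of $\mcal C_Y(T^\pi)$ are connected components, $\omega_Y(\pi)=N^{-|\mcal C_Y(T^\pi)|}\Tr^0[T_Y^\pi(\mbf Y_N)]=\prod_{C\in\mcal C_Y(T^\pi)}\frac1N\Tr^0[C(\mbf Y_N)]$ by multiplicativity over components, and each factor is $O(1)$ because $\sup_N\|Y_N\|<\infty$ by (H3) bounds the injective trace of a connected $Y$-graph by a constant times $N$. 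So the whole problem reduces to controlling $N^{q^{(2)}(\pi)}\omega_X^{(2)}(\pi)$ and matching it with the indicator $\one(T^\pi\in\boldsymbol{\mcal{DU}}\cup\boldsymbol{\mcal{FT}})$.

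Next I would unpack $\omega_X^{(2)}(\pi)$. By independence of the Wigner entries and permutation invariance (H1), it is a polynomial in the moments of a single normalized entry, evaluated over the coincidence pattern of Wigner edges prescribed by $\pi$; crucially, because of the subtraction of the product of the two marginal expectations in \eqref{Eq:OmegaX2}, it vanishes unless that coincidence pattern \emph{links the two cycles}, i.e. unless some identified group of Wigner edges contains edges from both $T_1^\pi$ and $T_2^\pi$. Moreover a group of size $1$ gives $\esp[x_{12}]=0$; a group of size exactly $2$ gives $\esp[|x_{12}|^2]=1$ when the two edges carry opposite orientation and $\esp[x_{12}^2]=0$ when they carry the same orientation, so hypothesis (H2) forbids equally-oriented twin edges; a group of size $3$ vanishes after permutation averaging; a group of size $4$ produces the fourth moment $\esp[|x_{12}|^4]$; groups contributing total excess multiplicity cost negative powers of $N$.

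The main obstacle is the exponent count. Applying the Euler-characteristic bound for traces of graphs of Mingo--Speicher \cite{MS12} to the graph of deterministic components $\mcal{GDC}(T^\pi)$ --- whose vertices are those of $T^\pi$ together with the $Y$-components, and whose $X$-edges appear in groups --- one obtains $q^{(2)}(\pi)\le 0$ whenever $\omega_X^{(2)}(\pi)\ne0$, which already yields the uniform boundedness of these quantities. One then has to show that $q^{(2)}(\pi)=0$ forces $T^\pi\in\boldsymbol{\mcal{DU}}\cup\boldsymbol{\mcal{FT}}$. Writing $q^{(2)}(\pi)$ as the alternating count of vertices, $X$-edge groups and $Y$-components and comparing to a spanning tree of $\overline{\mcal{GDC}}(T^\pi)$, equality forces $\overline{\mcal{GDC}}(T^\pi)$ to be a tree or to have exactly one independent cycle. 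In the latter case this cycle must arise from gluing the two original cycles along a common one, all $X$-edge groups having size $2$; the orientation constraint coming from (H2) then precisely kills the ``twisted'' gluings and leaves the $\boldsymbol{\mcal{DU}}$ configurations. In the former case, since the two cycles must still be $X$-connected, exactly one $X$-edge group has size $4$ and the two halves $T_i^\pi$ are of double-tree type --- the $\boldsymbol{\mcal{FT}}$ configurations, carrying a fourth cumulant. For these extremal $\pi$ the factor $\omega_X^{(2)}(\pi)$ converges ($\to1$ in the $\boldsymbol{\mcal{DU}}$ case, $\to$ a product of fourth cumulants in the $\boldsymbol{\mcal{FT}}$ case), whence $N^{q^{(2)}(\pi)}\omega_X^{(2)}(\pi)=\one(T^\pi\in\boldsymbol{\mcal{DU}}\cup\boldsymbol{\mcal{FT}})\,\omega_X^{(2)}(\pi)+o(1)$; summing over $\pi$ gives \eqref{Eq:2ndAsympIndep}. (Alternatively one may invoke the second-order version of asymptotic traffic independence \cite{Mal17} with the limiting traffic distribution of \cite[Section 3.1]{Mal11}.)

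For the Gaussianity of $\mbf Z_N$ I would run the same expansion for the $k$-th joint classical cumulant $\kappa_k\big(Z_N(p_1),\dots,Z_N(p_k)\big)$: it is a sum over partitions $\pi$ of the vertices of $T_{p_1}\sqcup\cdots\sqcup T_{p_k}$ of $N^{q^{(k)}(\pi)}\omega_X^{(k)}(\pi)\omega_Y(\pi)$, where $\omega_X^{(k)}(\pi)$ is the $k$-th classical cumulant of the Wigner-edge products and therefore vanishes unless the coincidence pattern makes all $k$ cycles connected through $X$-edges (and higher entry cumulants). The Euler-characteristic bound, now with this $k$-fold connectivity constraint, gives $q^{(k)}(\pi)\le 2-k$ for every contributing $\pi$. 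Hence $\kappa_1\equiv0$ by centering, $\kappa_2$ converges by the hypothesis of the proposition, and $\kappa_k\to0$ for $k\ge3$; the same bound shows all moments of $\mbf Z_N$ are $O(1)$ (a product of cumulant blocks of sizes $\ge2$ over a partition of $[k]$ is of order at most $N^0$, attained by pairings). A bounded process with vanishing mean, converging covariance, and vanishing cumulants of order $\ge3$ converges in law to the centered Gaussian process with the limiting covariance; applied to the real and imaginary parts of finitely many $Z_N(p_j)$ (equivalently, treating $Z_N(p)$ jointly with $Z_N(p^*)$) this yields the convergence of $\mbf Z_N$ to the Gaussian process of Theorem \ref{MainTh2}. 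The delicate step throughout remains the exponent counting on $\mcal{GDC}(T^\pi)$ --- in particular isolating the non-universal fourth-moment ($\boldsymbol{\mcal{FT}}$) contributions from the universal ($\boldsymbol{\mcal{DU}}$) ones and checking that (H2) exactly removes the twisted-diagram terms.
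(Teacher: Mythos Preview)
The paper does not actually prove this proposition: it is stated with the citation \cite[Section 3]{MMPS} and then used as a black box (the only comment afterwards is that $\omega_X^{(2)}(\pi)=1$ on $\boldsymbol{\mcal{DU}}$ and depends on the fourth moment on $\boldsymbol{\mcal{FT}}$). Your sketch is essentially the standard route that \cite{MMPS} takes --- factor $\omega_Y$ over connected components and bound it via (H3), then run the Mingo--Speicher Euler-characteristic bound on $\mcal{GDC}(T^\pi)$ to get $q^{(2)}(\pi)\le 0$ and classify the equality cases --- so in spirit you are aligned with the cited argument, and your alternative of invoking second-order asymptotic traffic independence from \cite{Mal17,Mal11} is precisely how the paper justifies the first-order analogue (Proposition~\ref{Prop:AsympIndep}).

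One imprecision worth flagging: your claim that ``a group of size $3$ vanishes after permutation averaging'' is not correct. Hypothesis (H1) only gives $x_{12}\overset{d}{=}\bar x_{12}$, which forces $\esp[x_{12}^a\bar x_{12}^b]=\esp[x_{12}^b\bar x_{12}^a]$ but does \emph{not} kill third moments (e.g.\ $\esp[|x_{12}|^2x_{12}]$ need not be zero). The reason groups of odd multiplicity do not contribute at leading order is purely the power counting: they produce a half-integer exponent in $N^{q^{(2)}(\pi)}$ that is strictly negative once you compare to the spanning-tree bound, so they fall into the $o(1)$. This is harmless for the conclusion, but the mechanism is the exponent, not a vanishing moment. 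Everything else --- the use of (H2) to kill equally-oriented twin edges (hence the twisted diagrams), the dichotomy between $\boldsymbol{\mcal{DU}}$ and $\boldsymbol{\mcal{FT}}$ in the equality case, and the $q^{(k)}(\pi)\le 2-k$ cumulant bound for Gaussianity --- is on target.
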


If $T^\pi \in  \boldsymbol{ \mcal{DU}}$ is of double-unicyclic type (with unicycle size $\geq2$) the normalization of Wigner matrices implies $\omega_X(\pi)=1$. Contrarywise, if $T^\pi \in  \boldsymbol{ \mcal{FT}}$ then $\omega_X(\pi)$ depends on the fourth moment of the entry of a Wigner matrix.

\subsection{Traces of alternating products of reduced elements}\label{Sec:TraceApre}

We first introduce notations in order to compute $\Phi^{(2)}$ for generic $\Delta$-polynomials. 
Notations $m_i,p_i$ and $\nu$ are before. For the sequel, we fix $p_1$ and $p_2$ of  form
\eq
	p_1 &=&  s_1 r_1  \cdots s_{n_1} r_{n_1} ,\\
	p_2 &=&  s_{n_1+1} r_{n_1+1}  \cdots s_{n_1+n_2} r_{n_1+n_2}  .
\qe
where $n_1,n_2\geq 1$ and we assume the following. Each $s_k$ belongs to $\Dxy\langle x_{\ell'_k}\rangle$ for some variable $x_{\ell'_k}$ and is of the form $s_k = x_{\ell'_k} s'_k = s''_k x_{\ell'_k}$. Each $r_k$ is in $\Dxy\langle \mbf y\rangle$; if moreover $r_k$ is $\Delta$-invariant, then the neighboring words are labeled by different variables, with the usual cyclic convention i.e. $\ell'_k \neq \ell'_{k+1}$ (if $k\notin\{ n_1, n_1+n_2\}$) or  $\ell'_{n_1}  \neq \ell'_{1}$ (if $k=n_1$) or $\ell'_{n_1+n_2}  \neq \ell'_{n_1+1}$ (if $k=n_1+n_2$).  We then define 
\eqa\label{Lesq1q2}
	q_1&  = &   \tilde s_1 \tilde r_1  \cdots \tilde s_{n_1} \tilde r_{n_1}\\
	q_2&  =  &\tilde s_{n_1+1}\tilde  r_{n_1+1}  \cdots \tilde s_{n_1+n_2} \tilde r_{n_1+n_2}
\qea
 as follow:
\begin{itemize}
	\item if $n_1\geq 2$ or $n_1=1$ and $r_1$ is not $\Delta$-invariant, then $ \tilde s_k = s_k - \Delta(s_k)$, $\tilde r_k = r_k-\Delta(r_k)$ if $r_k$ not $\Delta$-invariant and $\tilde r_k=r_k$ otherwise, $\forall k=1\etc n_1$;
	\item if $n_1=1$ and $r_1$ is $\Delta$-invariant, we simply set $q_1=p_1$;
	\item the similar definition holds for $q_2$. 
\end{itemize}
Note that in the first case $q_1 \in E_{n}$ for an $n\geq 2$: it is a cyclically $\mcal D^{(\mbf a)}$-alternating product of $\Delta$-centered elements. In the second case $q_1\in \Dxy\langle x_{\ell'_1}\rangle$: its ground monomial is a power of $x_{\ell'_1}$  (see the resemblance with the definition of $E_1$).

In Lemma \ref{Lem:Reduceness} below, we give an expression of $\Phi_N^{(2)}(q_1,q_2^*)$ obtained by ruling out certain terms in the expression \eqref{Eq:SeparationTerms} for $(p_1,p_2^*)$. Since $p_1=s_1 r_1  \cdots s_{n_1} r_{n_1}$ is not $\Delta$-invariant, its partition $\sigma(p_1)$ has a ground block. This corresponds to a cycle of $T_{p_1}$ that we call the \emph{ground cycle}. The variables in the ground block of $p_1$ appears as labels on the ground cycle of $T_{p_1}$. We call \emph{arcs} the maximal connected subsets of edges of the ground cycle that are labeled by letter of a single subword $r_k$ or $s_k$. There is hence a first arc for $s_1$ if it is not $\Delta$-invariant, there is always an arc for $r_2$ and so on.

We assume $q_1 \in E_{n}$ for some $n\geq 2$, or equivalently $n_1\geq 2$ or $n_1=1$ and $r_1$ is not $\Delta$-invariant. Hence the ground cycle is the union of at least two arcs. We denote by $(v_k,w_k), k\in \mcal I_1,$ the extremities of arcs. More precisely, we set $v_k$ the target of the last (w.r.t. the direct orientation) edge of the arc or $s_k$, and $w_{k}$ the source of its first edge. For each $k\in [n_1]$ such that $r_k$ is not $\Delta$-invariant, we define similarly the extremities $v_{n_1+k}$ and $w_{n_1+k}$ of the arc of the ground cycle labeled by variables of $r_k$. If we also have $q_2 \in E_N$, then with $p_2^*$ replacing $p_1$, the same definition stands for vertices $v^*_{k},w^*_k, k\in  \mcal I_2$. We have denoted by $\mcal I_1\subset [2n_1]$ and $\mcal I_2\subset [2n_2]$ the set of indices such that $(v_k,w_k)$ and $(v^*_k,w^*_k)$ respectively are defined. If $q_i$ does not belong to $E_{n}$ we set $\mcal I_i = \emptyset$.

\begin{definition} We denote by $\mcal P^\nu_{red}(V)\subset \mcal P^\nu(V)$ the set of \emph{reduced} partitions, such that $v_k$ and $w_k$ do not belong to a same block for $k\in \mcal I_1$, and the same property holds for $v^*_k$ and $w^*_k$ and $k\in \mcal I_2$.
\end{definition}

\begin{lemma}\label{Lem:Reduceness} Notations are as in Proposition \ref{Prop:SecondAsympIndep} and as in \eqref{Lesq1q2} for $q_1,q_2$ and at least one belongs to $E_n$ for some $n\geq 2$. Then we have
	$$ 	\Phi_N^{(2)}(q_1,q_2^*)  = \sum_{   \pi \in \mcal P^\nu_{red}(V) } N^{q^{(2)}(\pi)} \omega^{(2)}_X(\pi) \omega_Y(\pi).$$
\end{lemma}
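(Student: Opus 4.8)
## Proof Plan

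The plan is to expand $\Phi_N^{(2)}(q_1,q_2^*)$ by multilinearity into a sum of terms $\Phi_N^{(2)}(p'_1,p_2'^*)$, where each $p'_i$ is obtained from $p_i=s_1r_1\cdots s_{n_i}r_{n_i}$ by replacing each $\tilde s_k$ (resp. $\tilde r_k$) either by $s_k$ or by $-\Delta(s_k)$ (resp. $r_k$ or $-\Delta(r_k)$). For each such $p'_i$ we have the test graph $T_{p'_i}$, a quotient $T_i^{\nu'_i}$ of $T_i$; and applying formula \eqref{Eq:SeparationTerms} to $(p'_1,p_2'^*)$ expresses $\Phi_N^{(2)}(p'_1,p_2'^*)$ as a sum over $\pi\in\mcal P^{\nu'}(V)$ of $N^{q^{(2)}(\pi)}\omega_X^{(2)}(\pi)\omega_Y(\pi)$, where $\nu'=\nu'_1\sqcup\nu'_2$. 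Since $\nu'\geq\nu$ whenever we have applied a $\Delta$ somewhere, these terms are among those already appearing in the expansion for $(p_1,p_2^*)$; so after collecting, $\Phi_N^{(2)}(q_1,q_2^*)$ is a signed sum of $N^{q^{(2)}(\pi)}\omega_X^{(2)}(\pi)\omega_Y(\pi)$ over $\pi\in\mcal P^\nu(V)$, with each $\pi$ weighted by $\prod(\text{signs from which }\tilde s_k,\tilde r_k\text{ got a }\Delta)$ summed over all ways the coarsening $\pi$ is compatible with a choice of reductions. The goal is to show the net coefficient is $1$ when $\pi\in\mcal P^\nu_{red}(V)$ and $0$ otherwise.

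The key combinatorial step is the following cancellation lemma. Fix $\pi\in\mcal P^\nu(V)$ and consider an arc index $k\in\mcal I_1$ (say $s_k$), with arc-extremities $v_k,w_k$. Recall $\Delta(s_k)$ is obtained by bracketing the word of $s_k$, which in $T_{p_1}$ amounts to identifying $v_k$ with $w_k$. Two scenarios: (a) if $\pi$ already forces $v_k\sim w_k$, then both the choice "$\tilde s_k\to s_k$ with $\pi$" and "$\tilde s_k\to -\Delta(s_k)$ with $\pi$" produce the exact same quotient $T^\pi$ and hence the same $N^{q^{(2)}(\pi)}\omega_X^{(2)}(\pi)\omega_Y(\pi)$, with opposite signs — they cancel; (b) if $\pi$ does not force $v_k\sim w_k$, then replacing $\tilde s_k$ by $-\Delta(s_k)$ is incompatible with the partition $\pi$ on $V$ (it would require a strictly different, non-comparable coarsening), so only the "$\tilde s_k\to s_k$" choice contributes to that particular $\pi$. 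Applying this over all $k\in\mcal I_1\cup\mcal I_2$ simultaneously: for $\pi\in\mcal P^\nu_{red}(V)$ every arc is in scenario (b), so exactly one choice (all $\tilde{\phantom{s}}\to$ un-bracketed) survives with coefficient $+1$; for $\pi\notin\mcal P^\nu_{red}(V)$ at least one arc is in scenario (a), and a pairing/sign argument (pair each surviving choice with the one flipping that arc's $\Delta$) shows the signed contributions telescope to $0$.

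First I would set up the multilinear expansion carefully, tracking how each monomial $p'_i$ in the expansion corresponds to a partition $\nu'_i\geq\nu_i$ recording which arcs of the ground cycle of $T_{p_i}$ have been collapsed (plus the analogous collapse for paddle sub-words $r_k$); here I should be slightly careful since $\tilde r_k = r_k$ when $r_k$ is already $\Delta$-invariant, so those indices never enter $\mcal I_1$, consistent with the definition of reduced partitions only constraining $\mcal I_1,\mcal I_2$. Then I would prove the cancellation lemma above, the heart of which is the bookkeeping identity: for $\pi\in\mcal P^\nu(V)$, the set of reduction-choices $(\epsilon_k)$ that are "compatible with $\pi$" (meaning $\nu'\leq\pi$ and $\epsilon_k=\Delta$ only if $v_k\sim_\pi w_k$) contributes, after summing signs, $\prod_{k:v_k\sim_\pi w_k}(1-1)=0$ unless no arc is collapsed by $\pi$, in which case the product is empty and equals $1$. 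Matching "$v_k\sim_\pi w_k$ for no $k\in\mcal I_1\cup\mcal I_2$" with the definition of $\mcal P^\nu_{red}(V)$ then finishes the proof.

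The main obstacle I anticipate is the precise claim in scenario (b) — that replacing $\tilde s_k$ by $-\Delta(s_k)$ genuinely \emph{removes} that monomial's contribution at the partition $\pi$ (rather than merely relabeling it) — which requires checking that the quotient map data $(\nu'_i,\pi)$ versus $(\nu_i,\pi')$ give honestly disjoint index sets in the big sum \eqref{Eq:SeparationTerms}, i.e. that distinct $(p'_1,p_2'^*,\pi)$ with $\pi\geq\nu'$ never collide as subquotients of $T$ once we also remember which arcs were pre-collapsed by the $\Delta$'s. A clean way to handle this is to not re-expand via \eqref{Eq:SeparationTerms} at all but to work directly: expand only $q_1,q_2$ into $p'_1,p'_2$, apply \eqref{Eq:SeparationTerms} to each, and reorganize the resulting double sum $\sum_{(p'_1,p_2'^*)}\sum_{\pi\geq\nu'}$ as $\sum_{\pi\geq\nu}\big(\sum_{(p'_1,p_2'^*):\nu'\leq\pi}\pm\big)N^{q^{(2)}(\pi)}\omega_X^{(2)}(\pi)\omega_Y(\pi)$, at which point the inner signed sum is the purely combinatorial quantity analyzed above and the collision issue never arises. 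I would present it this second way.
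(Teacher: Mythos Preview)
Your proposal is correct and is essentially the paper's own proof: the paper indexes your reduction-choices by subsets $I \subset \mcal I = \mcal I_1 \sqcup \mcal I_2$, writes $\Phi_N^{(2)}(q_1,q_2^*) = \sum_{I\subset\mcal I}(-1)^{|I|}\Phi_N^{(2)}(p_1^{(I)},{p_2^{(I)}}^*)$, applies \eqref{Eq:SeparationTerms} to each term, and then swaps the two sums to obtain $\sum_{\pi\in\mcal P^\nu(V)}\big(\sum_{I\subset\mcal I_\pi}(-1)^{|I|}\big)N^{q^{(2)}(\pi)}\omega_X^{(2)}(\pi)\omega_Y(\pi)$, where the inner sum is exactly your $\prod_{k:\,v_k\sim_\pi w_k}(1-1)$. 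The obstacle you anticipate is resolved precisely by the double-sum reorganization you propose at the end, which is exactly how the paper presents it.
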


\begin{proof} We denote $\mcal I= \mcal I_1 \sqcup \mcal I_2$ the formal union of the index sets and write   $\Phi_N^{(2)}(q_1,q_2^*)=\sum_{I  \subset \mcal I  }(-1)^{|I |}\Phi_N^{(2)}(p_1^{(I )},{p_2^{(I )}}^*),$
with $p_1^{(I )}$ the $\Delta$-monomial whose test graph is obtained from $T_{p_1}$ by identifying $v_k$ and $w_{k}$ for each $k\in I\cap \mcal I_1$, and $ p_2^{(I_2)}$ defined similarly.
	For any $\pi \in V$, let $\mcal I_\pi \subset \mcal I$ the set of indices $k$ for which $v_k\sim_\pi w_{k}$, in disjoint union with  indices $k$ such that $v^*_k\sim_\pi w^*_{k}$. We get $
		\Phi_N^{(2)}(q_1,q_2^*) =   \sum_{I \subset \mcal I} (-1)^{|I|} \sum_{ \substack{ \pi \in \mcal P^\nu(V)\\ \mathrm{s.t.} \   I \subset \mcal I_\pi }} N^{q^{(2)}(\pi)} \omega^{(2)}_X(\pi) \omega_Y(\pi)$ and we can exchange the two sums  $\Phi_N^{(2)}(q_1,q_2^*) =  \sum_{ \pi \in \mcal P^\nu(V)}  \Big(\sum_{  I \subset \mcal I_\pi} (-1)^{|I|} \Big)  N^{q^{(2)}(\pi)} \omega^{(2)}_X(\pi) \omega_Y(\pi).$
The sum over $I\subset \mcal I_\pi$ vanishes if $\mcal I_\pi$ is not the empty set. Hence the result.
\end{proof}

\begin{corollary}\label{Cor:No42} Let $q_1, q_2$ as in \eqref{Lesq1q2} such that at least one belongs to $E_n$ for some $n\geq 2$. Then we have
	$$\Phi_N^{(2)}(q_1,q_2^*) = \sum_{\pi \in \mcal P^\nu_{red}(V)}\one\big(T^\pi \in \boldsymbol{ \mcal{DU}} \big)\prod_{ C \in \mcal C_Y(T^\pi)} \frac 1 N \Tr^0 \big[ C(\mbf Y_N) \big] +o(1).$$
\end{corollary}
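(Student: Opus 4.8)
The plan is to feed the asymptotic expansion of Proposition~\ref{Prop:SecondAsympIndep} into the reduced sum of Lemma~\ref{Lem:Reduceness} and then to isolate two purely combinatorial facts about reduced partitions. Starting from Lemma~\ref{Lem:Reduceness}, one has $\Phi^{(2)}_N(q_1,q_2^*)=\sum_{\pi\in\mcal P^\nu_{red}(V)}N^{q^{(2)}(\pi)}\omega^{(2)}_X(\pi)\omega_Y(\pi)$; since $\mcal P^\nu_{red}(V)$ is finite and, by Propositions~\ref{Prop:AsympIndep}--\ref{Prop:SecondAsympIndep}, each summand is uniformly bounded and equal to $\one(T^\pi\in\boldsymbol{\mcal{DU}}\cup\boldsymbol{\mcal{FT}})\,\omega^{(2)}_X(\pi)\prod_{C\in\mcal C_Y(T^\pi)}\frac1N\Tr^0[C(\mbf Y_N)]+o(1)$, collecting the error terms reduces the statement to: \emph{(A)} no $\pi\in\mcal P^\nu_{red}(V)$ has $T^\pi\in\boldsymbol{\mcal{FT}}$; and \emph{(B)} every $\pi\in\mcal P^\nu_{red}(V)$ with $T^\pi\in\boldsymbol{\mcal{DU}}$ satisfies $\omega^{(2)}_X(\pi)=1$.

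For \emph{(B)}: in double-unicyclic type all $\mbf x$-edges of $T^\pi$ occur in twin pairs of opposite orientation and common label, so by (H2) and the chosen normalization of the Wigner matrices the first expectation in \eqref{Eq:OmegaX2} equals $1$. For the subtracted product I would use that, say, $q_1\in E_n$ with $n\ge2$, so that the ground cycle of $T_{p_1}$ is a union of at least two arcs whose endpoints remain distinct under the reduced partition $\pi$; this forces the unique cycle of $\overline{\mcal{GDC}}(T^\pi_1)$ to still carry an $\mbf x$-edge of multiplicity $1$ inside $T^\pi_1$, whose centered entry is then unpaired within $T^\pi_1$, so the corresponding factor in the product of \eqref{Eq:OmegaX2} vanishes. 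Hence $\omega^{(2)}_X(\pi)=\omega_X(\pi)=1$, and the formula of the Corollary follows.

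Fact \emph{(A)} is the crux, and I expect it to be the main obstacle: it is exactly the place where the fourth moment drops out of the limit, since each $\boldsymbol{\mcal{FT}}$ partition carries a group of four $\mbf x$-edges contributing $\esp[|x_{12}|^4]$. If $T^\pi\in\boldsymbol{\mcal{FT}}$ then $T^\pi_1$ and $T^\pi_2$ are of double-tree type, so $\overline{\mcal{GDC}}(T^\pi_1)$ is a tree and the ground cycle of $T_{p_1}$ is entirely folded onto itself, its edges identified in twin pairs of opposite orientation. I would formalise this as a non-crossing pair partition of the cyclically ordered edges of the ground cycle, compatible with the alternation $\ell'_1\neq\ell'_2\neq\cdots$, and then invoke the standard fact that such a pairing has two cyclically adjacent matched edges. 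Tracing the cases, such a matched pair either lies inside a single subword $\tilde s_k$ or $\tilde r_k$ — impossible, since these are $\Delta$-centered and the vertex identification it induces is precisely the one removed by the centering — or it straddles two consecutive arcs, which identifies the two endpoints $v_k,w_k$ of one arc, contradicting $\pi\in\mcal P^\nu_{red}(V)$. In either case one reaches a contradiction, so the $4$-$2$ tree type does not occur among reduced partitions. At the level of test graphs this is the graph-theoretic shadow of the first-order vanishing $\Phi(q_1)=0$ for $q_1\in E_n$, $n\ge2$, which comes from ordinary freeness.
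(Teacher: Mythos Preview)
Your overall reduction is the paper's: Lemma~\ref{Lem:Reduceness} plus Proposition~\ref{Prop:SecondAsympIndep} leave only the combinatorial fact (A) that no reduced $\pi$ lands in $\boldsymbol{\mcal{FT}}$ (your fact (B) is the remark recorded just after Proposition~\ref{Prop:SecondAsympIndep}). But your case analysis for (A) does not work as written. An adjacent twinned pair lying inside a single arc $s_k$ identifies two \emph{internal} vertices of that arc, not $v_k$ with $w_k$; the $\Delta$-centering (equivalently, reducedness) only forbids the latter identification, so this case is \emph{not} impossible in one step. And when the adjacent pair straddles two consecutive arcs, the obstruction is not an identification of $v_k,w_k$ but simply a label mismatch: twin edges must share a label while consecutive arcs carry different ones by the alternation you yourself invoke. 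Your idea can be repaired by iterating --- since every innermost pair is forced to sit inside a single arc, repeatedly peeling such pairs eventually empties some arc, and only \emph{then} do its endpoints coincide, contradicting reducedness --- but this recursion, together with the role of the $\mbf y$-arcs from non-$\Delta$-invariant $r_k$ (whose edges are never twinned), has to be spelled out.

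The paper bypasses the recursion with a one-shot argument on the quotient. It forms the \emph{graph of coloured components} $\mcal{GCC}(C^\pi)$ of the folded ground cycle: its vertices are the maximal monochromatic pieces of $C^\pi$ (a colour being either a fixed $x\in\mbf x$ or ``$\mbf y$'') together with the boundary vertices where two colours meet. Double-tree type of $T_1^\pi$ forces this graph to be a tree; having at least two arcs of distinct colour forces at least two leaves; a leaf is a coloured component attached at a single boundary vertex, so any arc contributing an edge to that leaf has both extremities equal to that one vertex, hence $v_k=w_k$. This is exactly the conclusion your peeling argument is reaching for, obtained without induction and treating $\mbf x$- and $\mbf y$-arcs uniformly.
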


\begin{proof}We assume that $q_1$ is in $E_n$, i.e. $n_1\geq 2$ or $r_1$ is not $\Delta$-invariant.  By Proposition \ref{Prop:SecondAsympIndep}, it suffices to prove that there is no partition $\pi$ in $\mcal P^\nu_{red}(V)$ such that $T^\pi$ is of 4-2 tree type. We prove that if $\pi\in\mcal P^\nu(V)$ is such that the quotient $T_1^\pi$ of the cactus associated to $p_1$ is of double-tree type, then $\pi$ is not reduced, which implies the desired assertion.

Let $\pi\in\mcal P^\nu(V)$ such that $T_1^\pi\in \boldsymbol{ \mcal{DT}}$. Denote by $C$ the ground cycle of $T_{p_1}$. Then $C$ induces a closed path $C^\pi$ on $T_1^\pi$. We call \emph{colored component} of a test graph a maximal connected subgraph (with at least one edge) whose edges are all labeled either by a same variable $x$ in $\mbf x$, or by variables in $\mbf y$. We call \emph{graph of colored components} and denote $\mcal {GCC}(C^\pi)$ the undirect bipartite graph such that
\begin{itemize}
	\item the vertex set  is the disjoint union of the set $\mcal V_1$ of colored components of $C^\pi$, and the set $\mcal V_2$ of vertices of $C^\pi$ that belong to least two colored components,
	\item each $v \in \mcal V_2$ is connected by an edge to $S\in \mcal V_1$ whenever $v\in S$.
\end{itemize}

Since the graph of deterministic components $\mcal {GDC}(T^\pi)$ is of double-tree type, so is $\mcal {GDC}(C^\pi)$, and hence the graph of colored components $\mcal {GDC}(C^\pi)$ is a tree. Since the ground cycle of $p_1$ has at least two arcs labeled by different variables, $\mcal {GDC}(C^\pi)$ has at least two vertices in $\mcal V_1$. Hence the tree $\mcal {GDC}(C^\pi)$ has at least two leaves. A leaf of this graph is necessarily in $\mcal V_1$ since a vertex of $\mcal V_2$ is connected to at least two elements of $\mcal V_1$ by definition.

Let $e$ be an edge of the ground cycle of $T_1$ such that the corresponding step of $C^\pi$ belong to a leaf of $\mcal {GCC}(C^\pi)$. This edge belongs to an arc associated either to some $s_k$ or to a non $\Delta$-invariant $r_k$. We assume this arc associated to $s_k$, there is no modification of the reasoning in the other case. All edges of $s_k$ belong to the same colored component of $C$, so in the quotient $C^\pi$ they also belong to the same colored component. The extremities of the arc $s_k$ also belong to another colored component. Since we have considered a leaf, there is only one vertex of the component of $s_k$ with this property, hence $v_k$ and $w_k$ are equal. As a conclusion, $\pi$ is not reduced. 
\end{proof}

We a small abuse, we call \emph{double cycle} of $T^\pi \in  \boldsymbol{\mcal{DU}}$ the maximal subgraph that covers the cycle of $\overline{\mcal {GDC}(T^\pi)}$ when multiplicity is forgotten.

\begin{corollary}\label{PreOrtho} Assume $q_1\in E_n$ for some $n\geq 2$. Then for any $\pi\in \mcal P^\nu_{red}(V)$ such that $T^\pi \in \boldsymbol{\mcal{DU}}$, each arc associated to $s_k$ or to a non  $\Delta$-invariant $r_k$ has at least one edge on the double cycle of $T^\pi$. \end{corollary}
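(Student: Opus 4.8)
\textbf{Proof proposal for Corollary \ref{PreOrtho}.}
The plan is to argue by contradiction, paralleling the reasoning in the proof of Corollary \ref{Cor:No42} but now working relative to the \emph{double cycle} of $T^\pi$ rather than a double-tree. Suppose $\pi\in\mcal P^\nu_{red}(V)$ with $T^\pi\in\boldsymbol{\mcal{DU}}$, and suppose that some arc $\mathfrak a$ of the ground cycle $C$ of $T_{p_1}$, associated to an $s_k$ or a non-$\Delta$-invariant $r_k$, has \emph{no} edge lying on the double cycle of $T^\pi$. I would first transport the picture to the quotient of the ground cycle: $C$ induces a closed path $C^\pi$ in $T_1^\pi$, and since $T^\pi\in\boldsymbol{\mcal{DU}}$ with the double cycle sitting inside $\overline{\mcal{GDC}(T_1^\pi)}$, the part of $C^\pi$ consisting of steps \emph{not} on the double cycle forms a disjoint union of subtrees hanging off the double cycle (the ``paddle'' part). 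The arc $\mathfrak a$, by assumption, lies entirely inside such a hanging subtree.

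Next I would run the same graph-of-colored-components argument as in Corollary \ref{Cor:No42}, but localized to the hanging subtree $\mathcal T$ containing $\mathfrak a$. Since $\mathcal T$ is a tree (being a subgraph of the double-tree part of $\mcal{GCC}(C^\pi)$ off the unique cycle), and since it contains at least one full arc $\mathfrak a$ labeled by a variable distinct from the label on the double cycle where $\mathcal T$ attaches, the graph of colored components of $\mathcal T$ has at least two vertices in $\mcal V_1$, hence at least two leaves, at least one of which is not the attachment point of $\mathcal T$ to the double cycle. Exactly as before, that leaf is a colored component meeting the rest of $C^\pi$ in a \emph{single} vertex; tracing back, the two extremities $v_j,w_j$ of the corresponding arc (an $s_j$ or non-$\Delta$-invariant $r_j$ arc) must be identified by $\pi$. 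This contradicts $\pi\in\mcal P^\nu_{red}(V)$, which forbids $v_j\sim_\pi w_j$ and $v^*_j\sim_\pi w^*_j$ for all admissible indices. Hence no such arc $\mathfrak a$ exists, which is the assertion.

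The main obstacle I anticipate is the careful bookkeeping needed to ensure that the hanging subtree $\mathcal T$ genuinely contains a \emph{complete} arc whose label differs from the label at its attachment point — i.e. that in passing from the full ground cycle to the piece off the double cycle we have not accidentally reduced to a single colored component (which would make $\mcal{GCC}(\mathcal T)$ trivial and kill the leaf argument). This is where the hypothesis $q_1\in E_n$, $n\geq 2$, enters: the ground cycle decomposes into at least two arcs with alternating labels, so any hanging subtree that swallows a whole arc necessarily also touches the neighboring arc's label at its attachment vertex, giving the needed second colored component. One must also check that, when the arc under consideration is a non-$\Delta$-invariant $r_k$ (a word in the $\mbf y$'s) rather than an $s_k$, the same leaf-extremity identification still forces $v_k\sim_\pi w_k$; but as noted in the proof of Corollary \ref{Cor:No42}, the argument is insensitive to this distinction, since in both cases all edges of the arc lie in a single colored component of $C^\pi$ whose only connection to the rest of the path, at a leaf, is through one vertex. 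Aside from this, the remaining steps are the routine translation between $T^\pi$, $T_1^\pi$, and their graphs of colored/deterministic components already set up in the preceding proofs.
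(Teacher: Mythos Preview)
Your argument is correct and rests on the same key observation as the paper's: a leaf of the graph of colored components forces the extremities of some arc to coincide, contradicting reducedness. The paper's proof is more direct, however: instead of localizing to a hanging subtree $\mathcal T$, it simply notes that the full graph $\mcal{GCC}(C^\pi)$ is unicyclic (since $T^\pi\in\boldsymbol{\mcal{DU}}$), so \emph{any} leaf anywhere yields an identification---this global version avoids the bookkeeping you worried about in your last paragraph (the single-colored-component subtree case is absorbed automatically, and the role of $n\geq 2$ is only to ensure $\mcal{GCC}(C^\pi)$ has more than one vertex in $\mcal V_1$, exactly as in Corollary~\ref{Cor:No42}).
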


\begin{proof}We consider $ \pi \in \mcal P^\nu(V)$ such that $T^\pi \in  \boldsymbol{\mcal{DU}}$. We repeat the same reasoning as for the previous corollary. The graph of colored component $\mcal {GCC}(C^\pi)$ is a unicyclic graph. If it has a leaf, then corresponds an identification and so $\pi$ is not reduced. 
\end{proof}

\begin{corollary}\label{Ortho} The spaces $E_n,n\geq 0$ are asymptotically orthogonal for $\Phi^{(2)}_N$, that is: $\Phi^{(2)}_N(q,{q'}^*)\limN 0$, for any $q\in E_n$ and ${q'}^*\in E_{n'}$ such that $n\neq n'$. 
\end{corollary}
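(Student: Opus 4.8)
The plan is to reduce the statement to Corollary \ref{Cor:No42} and then to prove that, as soon as $n\neq n'$, no reduced partition can produce a double‑unicyclic quotient. By bilinearity of $\Phi^{(2)}_N$ and its traciality (cyclic invariance in each slot) it suffices to treat $q=q_1\in E_n$ and $q'=q_2\in E_{n'}$ with $q_1,q_2$ in the standard form \eqref{Lesq1q2}: any generating product $a_1\cdots a_m$ of $E_m$ ($m\geq 2$) becomes, after a cyclic rotation and after absorbing its $\mcal D^{(\mbf a)}$‑coefficients, a cyclically $\mcal D^{(\mbf a)}$‑alternating $\Delta$‑centered product with $m$ factors alternating between the $\mbf x$‑families and $\mbf y$ (a trivial $\Delta$‑invariant $r_k=1$ being inserted between two consecutive $\mbf x$‑factors of distinct families), and an $E_1$‑type element reduces (using that every block of $\sigma(m)$ is non‑mixing) to a single $\Delta$‑monomial whose ground cycle is monochromatic. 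The one bookkeeping point I need is that in this normalisation \emph{$m$ equals the number of ground arcs of the ground cycle of the cactus attached to $q_1$}: each $s_k$ yields one $\mbf x$‑arc, each non‑$\Delta$‑invariant $r_k$ yields one $\mbf y$‑arc, and by the alternation hypothesis consecutive ground arcs always carry different colours. Since $n\neq n'$, at least one of the two — say $q_1$, exchanging their roles if necessary, which merely swaps $T_1^\pi$ and $T_2^\pi$ — lies in $E_n$ with $n\geq 2$, so Corollaries \ref{Cor:No42} and \ref{PreOrtho} are available.

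By Corollary \ref{Cor:No42},
\[
\Phi_N^{(2)}(q_1,q_2^*) = \sum_{\pi \in \mcal P^\nu_{red}(V)}\one\big(T^\pi \in \boldsymbol{\mcal{DU}} \big)\prod_{ C \in \mcal C_Y(T^\pi)} \frac 1 N \Tr^0 \big[ C(\mbf Y_N) \big] +o(1),
\]
and each factor $\frac1N\Tr^0[C(\mbf Y_N)]$ is bounded, by (H3); so it is enough to show that there is no $\pi\in\mcal P^\nu_{red}(V)$ with $T^\pi\in\boldsymbol{\mcal{DU}}$ once $n\neq n'$. Suppose such a $\pi$ exists, let $D$ be the double cycle of $T^\pi$, and let $C_1,C_2$ be the ground cycles of the two cacti. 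Running the reasoning in the proofs of Corollaries \ref{Cor:No42} and \ref{PreOrtho}, applied now to both $C_1$ and $C_2$: each graph of coloured components $\mcal{GCC}(C_i^\pi)$ is unicyclic and, by reducedness, has no leaf, hence is a single cycle; moreover, since $T^\pi\in\boldsymbol{\mcal{DU}}$ is obtained by identifying the unique cycles of $T_1^\pi$ and of $T_2^\pi$, the images $C_1^\pi$ and $C_2^\pi$ traverse the same double cycle $D$, so $\mcal{GCC}(C_1^\pi)=\mcal{GCC}(C_2^\pi)$, namely the graph of coloured components of $D$.

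Its number of coloured‑component vertices equals, on one hand, the number of ground arcs of $C_1$: the latter are in cyclic‑order‑preserving bijection with the coloured components visited by $C_1^\pi$, because consecutive arcs lie in distinct components and, by reducedness, no arc degenerates — so this number is $n$. By the symmetric argument it also equals $n'$. Hence $n=n'$, contradicting $n\neq n'$; therefore the sum above is empty and $\Phi^{(2)}_N(q_1,q_2^*)=o(1)$, which is the assertion. (The degenerate possibility $n'=1$ is quicker: then $C_2$ is monochromatic, forcing $D$ to carry a single colour, whereas $\mcal{GCC}(C_1^\pi)$ being a cycle on $n\geq 2$ colour‑vertices forces at least two colours on $D$.)

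The step I expect to be the main obstacle is making the previous paragraph airtight: one must check that, under reducedness, each ground arc of $C_i$ projects onto a single coloured component of $D$ and that distinct ground arcs are not fused by the quotient (so that the arc‑to‑component correspondence is a genuine cyclic bijection), and that $C_1^\pi$ and $C_2^\pi$ sweep $D$ without backtracking or repetition. This is precisely the annular rigidity behind the Mingo--Speicher spoke formula, and the place where the double‑unicyclic structure $\boldsymbol{\mcal{DU}}$ — as opposed to the $4$‑$2$ tree structure $\boldsymbol{\mcal{FT}}$ already discarded in Corollary \ref{Cor:No42} — is essential; the remaining ingredients (bilinearity, traciality, the bound (H3), and the identification of $n$ with the ground‑arc count) are routine.
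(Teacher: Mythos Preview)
Your proof is correct and follows essentially the same route as the paper: reduce to the standard form $q_1,q_2$ of \eqref{Lesq1q2}, invoke Corollary~\ref{Cor:No42} to discard $\boldsymbol{\mcal{FT}}$, and then use the coloured-component structure together with Corollary~\ref{PreOrtho} to force the two ground cycles to have matching arc patterns on the double cycle, hence $n=n'$. The paper's argument is terser---it simply asserts that the label sequence on the unique cycle of $\mcal{GCC}(T_1^\pi)$ coincides (in reverse) with that of $\mcal{GCC}(T_2^\pi)$---whereas you articulate more carefully the counting step and flag the arc-to-component bijection as the point needing care; but the underlying mechanism is the same.
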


\begin{proof} Each element $q \in E_n$ for some $n\geq 2$ is linear combination of element of the form $q=q_1 $ as in the beginning of the section, given by reducing terms of a cyclic alternating product of $\Delta$-monomials. As well for each $\Delta$-monomial $q \in E_1$, either its has zero full degree in $\mbf x$ and so $\Phi^{(2)}_N(q,\, \cdot \, ) =0$, either by the change block method $\Phi^{(2)}_N(q,\, \cdot \, ) = \Phi^{(2)}_N(\tilde q,\, \cdot \, )$ where $\tilde q$ is of the form $\tilde q = p_1 = s_1r_1\in \Dxy\langle x \rangle$ as above (in the case where $n_1=1$ and $r_1$ is $\Delta$-invariant). Similarly, we can assume that $q'$ is of the form $p_2$ as above.

Hence it is sufficient to prove with these notations that $\Phi_N^{(2)}(q_1,q_2^*)$ tends to zero if $q_1 \in \mcal E_n$ and $q_2 \notin E_n$. But if $T^\pi$ is a double-unicyclic type graph, then the sequence of labels on the unique cycle of $\mcal {GCC}(T_1^\pi)$ and the same sequence for $\mcal {GCC}(T_2^\pi)$ with reverse order must coincide. Hence by Corollary \ref{PreOrtho}, there is a $\pi\in \mcal P^\nu_{red}(V)$ such that $T^\pi \in \boldsymbol{\mcal {DU}}$ only if $q_2 \in E_n$. 
\end{proof}

\subsection{Proof of (ii)}\label{Sec:ProofMainTh4}
In this section we  assume $q_1,q_2\in E_n$ for some $n\geq 2$. 
In order to compare $\Phi^{(2)}$ with the first-order $\Delta$-distribution $\Phi$, we also set $m_0=m_1m_2^*$, $p_0=p_1p_2^*$, and $q_0  = q_1q_2^*$.
We denote by $T_{0}$ the w.o. simple cycle associated to $\Delta(m_0)$, by $V_0$ its vertex set, and by $\nu_0$ the partition of $V_0$ such that $T_{p_0}=T_{0}^{\nu_0}$ is the w.o. cactus associated to $p_0$. We consider the vertices $v_{k}, w_{k}$ and $v^*_{k}, w^*_{k}$ in the ground cycle of the graph $T_{0}$ with same definition as before. By \cite[Lemma 4.5]{ACDGM}, we have the analogue of \eqref{Lem:Reduceness}, namely 		\eqa\label{Eq:Reduceness}
			\Phi_N(q_0) & = & \sum_{  \pi \in \mcal P^{\nu_0}_{red}(V_0) }   N^{q^{(1)}(\pi)} \omega^{(1)}_X(\pi) \omega_Y(\pi)\\
			& =& \sum_{  \pi \in \mcal P^{\nu_0}_{red}(V_0) } \one\big( T_0^\pi \in \boldsymbol{ \mcal{DT}}\big) \prod_{ C \in \mcal C_Y(T_0^\pi)} \frac 1 N \Tr^0 \big[ C(\mbf Y_N) \big] +o(1).
	\qea
Note that the ground cycle of $T_0$ is made of arcs from the $s_k$ and the non $\Delta$-invariant $r_k$ of both $\Delta$-monomials $p_1$ and $p_2^*$. The arc of $s_1$ is neighbor of the arc of $s_{n+1}$ and they can be labeled by a same Wigner matrix. If $r_{n}$ and $r_{2n}$ are not  $\Delta$-invariant, they also form two neighbouring arcs in the ground cycle of $T_0$, if they are  $\Delta$-invariant then $s_{n}$ and $s_{2n}$ have this property. Otherwise, all the other arcs have neighbors labeled by a variable in a different family. 

Assume there is a partition $\pi \in \mcal P^{\nu_0}_{red}(V_0)$ such that $T_0^\pi$ is of double-tree type. Recall from the proof of Corollary \ref{Cor:No42} the definition of the graph of colored components $\mcal {GCC}(T_0^\pi)$. This graph is a tree with at least two leaves, and a leaf cannot contain an edge of an arc whose extremities are not identified if its neighbors are labeled by different families. Hence these leaves corresponds to the two possible cases of adjacent arcs with same labels family. Necessarily $s_1$ and $s_{n+1}$ are labeled by a same variable and $w_1\sim_\pi v_1^*$. Moreover, $r_{n}$ is  $\Delta$-invariant if and only if $r_{2n}$ is $\Delta$-invariant, in which case $s_{n}$ and $s_{2n}$ are labeled by a same variable and $v_k\sim_\pi w^*_k$. If $ r_{n}$ and $r_{2n}$ are not $\Delta$-invariant, then $v_{2n}\sim_\pi w_{2n}^*$.

Reasoning by induction on the subgraph obtained by removing the leaves,  $\Phi_N(q_0) $ does not converges to zero only if for all $k\in [n]$, $s_k$ and $s_{n+k}$ have same label, $r_k$ is  $\Delta$-invariant if and only if $r_{n+k}$ is  $\Delta$-invariant. In this case we get
	\eqa\label{Eq:KeyIdentifications}
		\Phi_N(q_0) = \sum_{ \substack{  \pi \in \mcal P^{\nu_0}_{red}(V_0) \\ v_k\sim_\pi w_k^* \ \forall k>1} } \one\big( T_0^\pi \in \boldsymbol{ \mcal{DT}}\big) \prod_{ C \in \mcal C_Y(T_0^\pi)} \frac 1 N \Tr^0 \big[ C(\mbf Y_N) \big].
	\qea

We are now in position to prove Mingo-Speicher formula for elements $q_1,q_2\in E_n,n\geq $  as in \eqref{Lesq1q2}, that $\Phi^{(2)}\big( q_1,q_2^*) = \sum_{i=0}^{n-1} \Phi\big( q_1(q_2)_{[i]}^*\big)$ where 
	$$(q_2)_{[i]}  =  \tilde s_{n_1+1+i}\tilde  r_{n_1+1+i}  \cdots \tilde s_{n_1+n_2} \tilde r_{n_1+n_2}\tilde s_{n_1+1} \tilde r_{n_1+1} \cdots \tilde s_{n_1+i} \tilde r_{n_1+i}.$$
The expression computed for $\Phi\big( q_1(q_2)^*\big)$ extends obviously fo $\Phi\big( q_1(q_2)_{[i]} ^*\big)$, so we have all combinatorial expressions to demonstrate the equality. Yet the exercise of writing a bijection for this proof is delicate, especially when trying to emphasis the crucial role of assumption (H5). We propose instead to prove the formula thanks to a variation of Schwinger-Dyson equation for $\Phi$ and $\Phi^{(2)}$, following the strategy of the first order convergence. 

\begin{lemma}\label{FirstOrderSD2} With notations as in the beginning of the section, writing $s_1=xs'_1$, we have the following recurrence
	\eq
		\Phi(q_0)		& = & \sum_{s_1 = x\ell x r x } \Phi(\ell) \Phi\Big[ \big( rx - \Delta(rx)\big)  \tilde r_1 \tilde s_2 \cdots \tilde s_{n} \tilde r_{n} \times q_2^* \Big]\\
		& & \ \ + \sum_{s_{n+1} = \ell x r} \Phi(r) \Phi\Big[ s'_1  \tilde r_1 \tilde s_2 \cdots \tilde s_{n} \tilde r_{n} \times  \tilde r_{2n} \tilde s_{2n}  \cdots \tilde r_{n+1} \ell \Big].
	\qe
\end{lemma}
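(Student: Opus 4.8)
The plan is to mimic the first-order Schwinger--Dyson derivation of Section \ref{Sec:SDeq}, but now applied to the single $\Delta$-monomial $q_0 = q_1 q_2^*$ whose test graph is the w.o.\ cactus $T_{q_0}$, a quotient of the simple cycle $T_0$ associated to $\Delta(m_0)$. Write $s_1 = x s_1'$, so the first edge $e$ of the ground cycle of $T_0$ is labeled by the Wigner matrix $X$ (with $\gamma(e)$ the index of $x$). By Proposition \ref{Prop:AsympIndep} combined with the reduceness reduction \eqref{Eq:Reduceness}, $\Phi_N(q_0)$ is the sum, over reduced partitions $\pi \in \mcal P^{\nu_0}_{red}(V_0)$ with $T_0^\pi \in \boldsymbol{\mcal{DT}}$, of the product of normalized deterministic traces over the components of $T_0^\pi$. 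In any such double-tree quotient, $e$ must be paired with a unique twin edge $e'$ of the same label $x$, of opposite orientation, lying in the \emph{same} simple cycle of $T_0$ (the connectivity argument from Section \ref{Sec:SDeq}). Since the only cycle of $T_0$ containing $e$ is its ground cycle, $e'$ must also lie on the ground cycle, hence on an arc coming from one of the subwords $s_k$ (it cannot sit on an $r_k$-arc since those are labeled by $\mbf y$). The key case split is then: either $e'$ lies on the arc of $s_1$ itself, or $e'$ lies on the arc of some $s_{n+k}$ from the $p_2^*$-side (by \eqref{Eq:KeyIdentifications}, in the non-vanishing case $s_k$ and $s_{n+k}$ carry the same label, so the relevant matching of $e$ across families is with the arc of $s_{n+1}$).

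First I would treat the case $e' \subset s_1$. A twin edge $e'$ inside the $s_1$-arc, with opposite orientation to $e$, determines a factorization $s_1 = x\,\ell\,x\,r\,x$ where $\ell$ is the piece of $s_1$ strictly between the two occurrences of $x$ corresponding to $e$ and $e'$ (it forms a closed sub-cycle once $e,e'$ are glued), and $r\,x$ is the remaining tail. Gluing $e$ to $e'$ splits $T_0$ exactly as in Section \ref{Sec:SDeq}: one gets a disjoint piece carrying $\ell$ (contributing the factor $\Phi(\ell)$, as a consequence of the first-order SD equation \eqref{SDeq} applied inside that sub-cactus) and a second piece carrying the remaining word $\big(rx - \Delta(rx)\big)\,\tilde r_1 \tilde s_2 \cdots \tilde s_n \tilde r_n \times q_2^*$. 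The appearance of $rx - \Delta(rx)$ rather than $rx$ is forced by the reduceness constraint: in a reduced partition the extremities $w_1, v_1$ of the $s_1$-arc are not identified, which is precisely the combinatorial shadow of subtracting the $\Delta$-part — this should follow from exactly the inclusion–exclusion bookkeeping used in the proof of Lemma \ref{Lem:Reduceness} and Corollary \ref{PreOrtho}. Summing over all such factorizations of $s_1$ yields the first line.

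Next, the case where $e'$ lies on the $s_{n+1}$-arc on the $p_2^*$-side. Here gluing $e$ to $e'$ cuts $s_{n+1}$ into $\ell\,x\,r$, detaching the sub-word $r$ into a separate component (factor $\Phi(r)$) and leaving the single long cyclic word $s_1' \tilde r_1 \tilde s_2 \cdots \tilde s_n \tilde r_n \times \tilde r_{2n} \tilde s_{2n} \cdots \tilde r_{n+1}\,\ell$, traced through $\Phi$; the cyclic reordering and the placement of $\ell$ at the end come from reading the ground cycle of $T_0$ starting just after $e'$. Again reduceness turns the neighbouring $s_k,r_k$ blocks into their $\Delta$-centered versions $\tilde s_k, \tilde r_k$. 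Summing over factorizations $s_{n+1} = \ell x r$ gives the second line, and adding the two contributions (after checking no twin-edge configuration is double-counted — $e'$ is either on the $s_1$-arc or on the $s_{n+1}$-arc, never both, and no other arc can host it) completes the recurrence.

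The main obstacle I anticipate is the careful bookkeeping that converts the ``reduced-partition + double-tree'' sum into the clean algebraic recurrence with the $\tilde{\phantom{x}}$-centered blocks and the $rx - \Delta(rx)$ factor: one must verify that, after gluing $e$ to $e'$, every partition of the two resulting sub-graphs that is reduced \emph{and} double-tree corresponds bijectively to a pair of admissible partitions of the pieces, with the $\Delta$-subtractions exactly accounting for the forbidden identifications of arc-extremities. This is the same mechanism as in Lemma \ref{Lem:Reduceness} and in \cite[Lemma 4.5]{ACDGM}, so the right move is to invoke those inclusion–exclusion identities rather than re-derive them; the genuinely new point is only the identification of which arcs can carry the twin edge $e'$, which is dictated by \eqref{Eq:KeyIdentifications}.
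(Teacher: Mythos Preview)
Your proposal is correct and follows essentially the same route as the paper's proof: express $\Phi(q_0)$ via reduced double-tree partitions \eqref{Eq:Reduceness}, use \eqref{Eq:KeyIdentifications} to restrict the twin $e'$ of the first edge $e$ to the arcs of $s_1$ or $s_{n+1}$, then split the graph in each case to recover the two summands, with the reduceness constraints on the surviving piece producing the $\Delta$-centered factors $rx-\Delta(rx)$ and $\tilde s_k,\tilde r_k$. The paper's justification for why $e'$ lies only on $s_1$ or $s_{n+1}$ is slightly sharper than yours---it first imposes the forced identification $w_1\sim_\pi v_1^*$ coming from \eqref{Eq:KeyIdentifications}, which breaks the ground cycle so that the twin-edges-lie-in-the-same-simple-cycle argument of Section~\ref{Sec:SDeq} applies to the smaller cycle containing only those two arcs---but your invocation of \eqref{Eq:KeyIdentifications} points to exactly the right mechanism.
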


\begin{proof} By \eqref{Eq:Reduceness} and Proposition \ref{Prop:AsympIndep}, 
	$$\Phi(q_0) = \Nlim  \sum_{\pi  \in \mcal P^\nu_{red}(V_0)}  \one\big( T_0^\pi \in \boldsymbol{ \mcal{DT}}\big) \underbrace{\prod_{ C \in \mcal C_Y(T_0^\pi)} \frac 1 N \Tr^0 \big[ C(\mbf Y_N) \big]}_{=:\tilde \omega_Y(\pi)}.$$
Moreover, by \eqref{Eq:KeyIdentifications}, we can restrict to partitions such that $v_2 \sim_\pi w_2^*$ (or equivalently $w_1 \sim_\pi v_1^*$). Let $e$ denotes the edge of $T_{p_0}$ associated to the first letter $x$ of the word $p_0$ in $T_0$. For any $\pi$ as in the above sum and such that $T_0^\pi \in \boldsymbol{ \mcal{DT}}$, let $e'$ be the twin edge of $e$. Then $e'$ must belong to the same simple cycle as $e$ in the graph obtained from $T_0$ by identifying $w_1$ and $v_1^*$. We writing in short $e' \in s_k$ when $e'$ belongs to the arc defined by $s_k$, we then get $e' \in s_1$or $e'\in s_{n+1}$.    

If $s_1=xs'_1\neq x$, then $s'_1$ is not the empty word and it is of the form $s'_1=s''_1x$. Assume $e' \in s_1$.  Because of the condition $v_1\not\sim_\pi w_1$, $e'$ cannot be the edge corresponding to the last letter $x$ of the word $s'_1$. The possible choices of an edge $e' \in s_1$ corresponds then to the choices of a decomposition $s'_1 = \ell x r x$. The graph $T_0^\pi$ is a quotient of the graph $T_{0,e\sim e'}$ obtained by identifying source of $e$ and target of $e'$ and reciprocally. Removing $e,e'$ from $T_{0,e\sim e'}$ yields two connected components $T_{\ell}$ and $T_{p'}$ where $p'=  rx r_1 s_2 r_2  \cdots s_{n} r_{n}\times  r_{2n}s_{2n}\cdots r_{n+1} s_{n+1}$. Let $\pi_1$ and $\pi_2$ be the restrictions of $\pi$ on the vertex sets of these two subgraphs respectively. Then the quotients $T_{ \ell}^\pi$ and $T_{p'}^\pi$ are also of double tree type. The partition $\pi_2$ is subject to the constraint that extremities of the arcs of the ground cycle of $p'$ must not be identified, we denote in short $\pi_2\in \mcal P_{red}(V_{p'})$. The partition of $\pi_1$ is subject to no constraint. Hence, by Proposition \ref{Prop:AsympIndep} and the same formula as \eqref{Eq:Reduceness} for $p'$, 
\eq
	\lefteqn{\sum_{ \substack{ \pi \in \mcal P^\nu_{red}(V_0) , \ \mathrm{ s.t.}  \ e' \in s_1}} \one\big( T_0^\pi \in \boldsymbol{\mcal {DT}} \big)\tilde \omega_Y(\pi) }\\
	&= & \sum_{ s'_1 = \ell x r x}   \sum_{ \pi_1\in \mcal P(V_\ell)} \one\big( T_\ell^\pi \in \boldsymbol{\mcal {DT}} \big) \tilde \omega_Y(\pi_1)    \sum_{   \pi_2 \in \mcal P_{red}(V_{p'})} \one\big( T_{p'}^{\pi_2} \in \boldsymbol{\mcal {DT}} \big) \tilde \omega_Y(\pi_2) \\
	& \limN&  \sum_{ s_1 = x\ell x r x} \Phi[\ell] \Phi\Big[ \big( rx - \Delta(rx)\big)  \tilde r_1 \tilde s_2 \cdots \tilde s_{n} \tilde r_{n} \times \tilde q_2^* \Big].
\qe

Assume now that $e' \in s_{n+1}$, which corresponds to a decomposition $s_{n+1} = \ell x r$. As for the case $e'\in s_1$, $T_0^\pi$ is then a quotient of $T_{0,e\sim e'}$. Removing $e$ and $e'$ from this graph yields two connected components $T_{r}$ and $T_{p'}$ where $p' = s'_1 r_2 s_2 \cdots s_{n} r_n s_{2n} r_{2n} \cdots r_{n+1} \ell$. The restriction of $\pi$ on the vertex set of $T_r$ has no constraints, the restriction on the vertex set of $T_{p'}$ must not identify extremities of all arcs of the ground cycle but the first and last arcs that correspond to $s'_1$ and $\ell$ (conditions $v_1\not\sim_\pi w_1$ and $v_{n+1}\not\sim_\pi w_{n+1}$ are always satisfied when $e'\in s_{n+1}$ since $w_1$ and $v_{n+1}$ are vertices of $T_\ell$ whereas $v_1$ and $w_{n+1}$ are vertices of $T_{p'}$). Hence the claimed formula.\end{proof}

\begin{lemma}\label{SecondOrderSD2} With above notations, we write $s_k=\big(xs'_k - \Delta(xs'_k)\big)$. Then
	\eq
		\lefteqn{\Phi^{(2)}\big( q_1,q_2)}\\
		& = & \sum_{s_1 = x\ell x r x } \Phi(\ell) \Phi^{(2)}\Big[ \big( rx - \Delta(rx)\big)  \tilde r_1 \tilde s_2 \cdots \tilde s_{n} \tilde r_{n},  \tilde q_2^*    \Big]\\
		& & \ \ + \sum_{i=1}^{n}\  \sum_{s'_{n+i} = \ell x r}  \Phi\Big[ s'_1  \tilde r_1 \tilde s_2 \cdots \tilde s_{n} \tilde r_{n} \Delta(r) \tilde r_{n+i-1}\tilde s_{n+i-1} \cdots r_{n+1}s_{n+1} r_{2n} s_{2n} \cdots \tilde r_{n+i} \ell \Big].
	\qe
\end{lemma}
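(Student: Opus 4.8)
The plan is to mirror the proof of Lemma \ref{FirstOrderSD2}, but starting from the second-order combinatorial expansion \eqref{Eq:SeparationTerms} together with Corollary \ref{Cor:No42} rather than from \eqref{Eq:Reduceness}. Since $q_1,q_2\in E_n$, Corollary \ref{Cor:No42} expresses $\Phi_N^{(2)}(q_1,q_2^*)$ as a sum over $\pi\in\mcal P^\nu_{red}(V)$ of $\one(T^\pi\in\boldsymbol{\mcal{DU}})\prod_{C\in\mcal C_Y(T^\pi)}\frac1N\Tr^0[C(\mbf Y_N)]$ up to $o(1)$. As in the first-order argument I would fix the edge $e$ of $T^\pi$ associated to the first letter $x$ of the word $p_1$, and for each $\pi$ with $T^\pi\in\boldsymbol{\mcal{DU}}$ let $e'$ be its twin edge. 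The key point is to determine which arcs $e'$ can lie in: because $T^\pi$ is of double-unicyclic type and $e,e'$ must both lie on the double cycle (Corollary \ref{PreOrtho} guarantees every arc meets the double cycle), $e'$ belongs either to the arc of $s_1$ (inside $q_1$, giving the first sum) or to an arc of some $\tilde s_{n+i}$ in $q_2$ (giving the second sum, now indexed by which of the $n$ cyclic positions $i$ is hit — this is exactly where the cyclic shift of Mingo--Speicher enters).

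In the first case, $e'\in s_1$: by the reducedness constraint $v_1\not\sim_\pi w_1$, $e'$ cannot be the edge of the last letter $x$ of $s_1=xs'_1$, so the choices of $e'$ correspond to decompositions $s_1=x\ell x r x$. Identifying the endpoints of $e$ and $e'$ splits the picture into a double-tree piece $T_\ell$ (unconstrained $\pi_1$) and a remaining graph still carrying two cycles, i.e. the second-order object with the reduced factor $(rx-\Delta(rx))\tilde r_1\tilde s_2\cdots\tilde s_n\tilde r_n$ against $\tilde q_2^*$; passing to the limit via Proposition \ref{Prop:AsympIndep} (first order, for the $T_\ell$ factor) and Corollary \ref{Cor:No42} or the definition of $\Phi_N^{(2)}$ (for the two-cycle factor) yields the first displayed sum with $\Phi(\ell)\,\Phi^{(2)}[\,\cdot\,,\tilde q_2^*]$. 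In the second case, $e'\in \tilde s_{n+i}$ for some $i$: now identifying $e,e'$ merges the two cycles into a single one, so the resulting graph is of double-tree type and the surviving quantity is a \emph{first-order} $\Delta$-moment $\Phi$ of a long $\Delta$-monomial obtained by splicing $q_1$ and the cyclically rotated $q_2$ at the cut; the reducedness bookkeeping (which endpoints of which arcs may or may not be glued, in particular that the two ``free'' arcs at the splice point are $s'_1$ and $\ell$) produces exactly $\Phi[s'_1\tilde r_1\tilde s_2\cdots\tilde s_n\tilde r_n\,\Delta(r)\,\tilde r_{n+i-1}\tilde s_{n+i-1}\cdots r_{n+1}s_{n+1}r_{2n}s_{2n}\cdots\tilde r_{n+i}\ell]$, with the $\Delta(r)$ coming from contracting the $Y$-colored component attached where $e'$ sat inside $s'_{n+i}=s''_{n+i}\cdots$, writing $s_{n+i}=xs'_{n+i}-\Delta(xs'_{n+i})$ and decomposing $s'_{n+i}=\ell x r$.

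The bookkeeping in the second case is the main obstacle: one must check carefully that the identification pattern forced by $T^\pi\in\boldsymbol{\mcal{DU}}$ and $\pi\in\mcal P^\nu_{red}(V)$ is in bijection with the data $(i,\ \ell x r$-decomposition of $s'_{n+i},\ \pi$ on the two pieces$)$, tracking exactly which arc-endpoint identifications persist (so that the residual constraint on one piece is again ``reduced'' and on the other is unconstrained), and that the colored-component contractions assemble into the stated $\Delta$-monomial with the single bracket $\Delta(r)$ and no spurious brackets. This is the same style of argument as in Lemma \ref{FirstOrderSD2} and uses the twin-edge connectivity facts established in Section \ref{Sec:SDeq}, but with the extra layer that the cut can now either keep the graph two-cyclic (contributing to $\Phi^{(2)}$) or make it mono-cyclic (contributing to $\Phi$); I would handle the two outcomes separately and invoke Proposition \ref{Prop:SecondAsympIndep}/Corollary \ref{Cor:No42} and Proposition \ref{Prop:AsympIndep}/\eqref{Eq:Reduceness} respectively to pass to the limit. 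Once the combinatorial identity of index sets is established, taking $N\to\infty$ is immediate from those asymptotic formulas.
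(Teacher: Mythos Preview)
Your overall approach matches the paper's: start from Corollary~\ref{Cor:No42}, fix the edge $e$ corresponding to the first letter $x$ of $s_1$, find its twin $e'$, and split into the two cases $e'\in s_1$ versus $e'\in s_{n+i}$. The handling of each case is also essentially what the paper does.

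However, your justification for the case split contains a real error. You write that ``$e,e'$ must both lie on the double cycle (Corollary~\ref{PreOrtho} guarantees every arc meets the double cycle)''. Corollary~\ref{PreOrtho} only says that each arc has \emph{at least one} edge on the double cycle; it does not say every edge of the arc is there. In fact in your own Case~1 ($e'\in s_1$) the pair $e,e'$ is \emph{not} on the double cycle --- that is precisely why removing it leaves a double-unicyclic piece (your ``remaining graph still carrying two cycles''). So your stated reason cannot be what excludes $e'\in s_k$ for $2\le k\le n$. The correct argument, which the paper invokes tersely as ``the usual argument on the graph of colored components'', is the leaf argument from the proof of Corollary~\ref{Cor:No42}: if $e'\in s_k$ with $2\le k\le n$, the double-tree component produced by removing $e,e'$ contains arcs whose neighbors have different colors, and a leaf of its $\mcal{GCC}$ then forces an identification $v_j\sim w_j$, contradicting $\pi\in\mcal P^\nu_{red}(V)$.

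A second point you leave vague is the origin of the $\Delta(r)$ in the second sum. Your phrase ``contracting the $Y$-colored component attached where $e'$ sat'' is not the mechanism. What actually happens is that once $e,e'$ are matched across the two cycles, the remaining edges of the arc of $s_{n+i}$ on the $r$ side carry the label $x$ and cannot be twinned with the edges of the adjacent arc (which by Corollary~\ref{PreOrtho} must appear next on the double cycle and carries a different label); hence the two endpoints of the $r$-path are forced to coincide in $T^\pi$, which is exactly the identification that inserts the bracket $\Delta(r)$. You should make this step explicit rather than defer it to ``bookkeeping''.
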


\begin{proof} Let $\pi$ be a partition such that $\pi\geq \nu$, $T^\pi$ is of double-unicyclic type, and $v_k\not\sim_\pi w_k$ for any $k$. Let $e$ be the edge of $T$ associated to the first letter $x$ of $s_1=xs'_1$ and let $e'$ be the edge twined with $e$ by $\pi$. 

Condition $v_1\not\sim w_1$ implies as before that $e'$ is not associated to the last letter of $s_1$. Assume that $e'\in s_1$, which hence corresponds to a decomposition $s_1 = \ell x rx$. The double edge is not in the double cycle of $T^\pi$, and so the graph obtained by removing $e$ and $e'$ from $T^\pi$ has two connected components: one is a double tree quotient of $T_\ell$, the other is a double unicyclic graph quotient of the graph obtained from $T$ by replacing $xm_1$ by $rx$ (Lemma \ref{PreOrtho} ensures that the graph of double-unicyclic type is not the quotient of $T_\ell$). The same reasoning as for Lemma \ref{FirstOrderSD2} hence gives  the first term.

The usual argument on the graph of colored components implies that $e'\not\in s_k$ for $2\leq k\leq n$. Assume hence that $e' \in s_k$ for $k>n$. The choice of $k$ correspond to the choice of $i\in [1\etc n]$, and the choice of the edge $e'$ in the arc of $s_k$ corresponds to the choice of a decomposition $s_{n+i} = \ell x r$. Moreover, removing $e$ and $e'$ from $T^\pi$ produces a graph of double-tree type. The argument of the beginning of this section implies that identification $w_k\sim_\pi v_k^*$ must be satisfied. By \eqref{FirstOrderSD2}, for $i=1$
\eq
	\lefteqn{\sum_{ \substack{ \pi \in \mcal P^\nu_{red}(V) , \\ \mathrm{ s.t.}  \ e' \in s_{n+1}}} \one\big( T_0^\pi \in \boldsymbol{\mcal {DT}} \big)\tilde \omega_Y(\pi) }\\
	&= & \sum_{s'_{n+1} = \ell x r}  \Phi\Big[ s'_1  \tilde r_1 \tilde s_2 \cdots \tilde s_{n} \tilde r_{n} \times r \times   \tilde r_{n+i-1}\tilde s_{n+i-1} \cdots r_{n+1}s_{n+1} r_{2n} s_{2n} \cdots \tilde r_{n+i} \ell \Big]
\qe

We claim that the source and target vertices of the path corresponding to $\ell$ must be identifying. Indeed, the labels of the edges are $x$, so they cannot be twined with the edges of $s_{n}$, whose edges must appear in the double cycle just before those of $s_1$ by Corollary \ref{PreOrtho}. Hence in above formula we can replace $r$ by $\Delta(r)$. We get the analogue for formula for $i>1$ by shifting the indices in $q_2$, and hence the second contribution in the lemma.
\end{proof}

By Assumption (H5), we have 
	\eq
	\lefteqn{\Phi\Big[ s'_1  \tilde r_1 \tilde s_2 \cdots \tilde s_{n} \tilde r_{n} \Delta(r) \tilde r_{n+i-1}\tilde s_{n+i-1} \cdots r_{n+1}s_{n+1} r_{2n} s_{2n} \cdots \tilde r_{n+i}  \Big]}\\
	&  = & \Phi(r)\Phi\Big[ s'_1  \tilde r_1 \tilde s_2 \cdots \tilde s_{n} \tilde r_{n} \tilde r_{n+i-1}\tilde s_{n+i-1} \cdots r_{n+1}s_{n+1} r_{2n} s_{2n} \cdots \tilde r_{n+i}  \Big].
	\qe
Assume first that $s_1$ has ground degree 1 or 2. Then both in Lemmas \ref{FirstOrderSD2} and \ref{SecondOrderSD2} there are no decomposition $s'_1 = \ell x r x$ and the first sum is zero. We hence get that
$\Phi^{(2)}\big( q_1,q_2^*) = \sum_{i=0}^{n-1} \Phi\big( q_1(q_2)_{[i]}^*\big)$
as claimed.  Assume now that  $s_1$ has ground larger than 2. Subtracting the formulas 
	$$\Phi^{(2)}[q_1,q_2] - n   \Phi[q_1\mcal M(q_2)^*] = \sum_{s_1 = x \ell x r x} \Phi(\ell)\big( \Phi^{(2)}[q'_1,q_2^*] - n \Phi[q'_1\mcal M(q_2)^*] \big),$$
	where $q'_1 = rx - \Delta(rx)$. By induction of the ground degree of $s_1$, we hence get that the formula is always valid.

\subsection{Proof of (iii)}\label{Sec:ProofMainTh3}

By linearity, traciality and  invariance under $\Delta$ it is sufficient to prove that for any non-mixing $\Delta$-monomial $p$,
	\eqa\label{Eq:SecondOrderTransitivity}
		\Phi_N^{(2)}(p, \, \cdot \, ) = \sum_{B \in \sigma(p)}  \Phi_N^{(2)}(p_B, \, \cdot \, ) \prod_{ B' \neq B} \Phi_N(m_{B'})+o(1).
	\qea

If $p \in \mbb C\langle \mbf y \rangle_\Delta$, the equality is obvious, both sides vanish. Let $p\in \mbb C\langle \mbf x, \mbf y \rangle_\Delta$ of positive full degree on a variable $x \in \mbf x$. Assume that \eqref{Eq:SecondOrderTransitivity} holds for all $\Delta$-monomials of smaller full degree. By ground block change, which preserves both side of \eqref{Eq:SecondOrderTransitivity}, we can assume that $p$ is actually of the form $xm$. We hence consider $\Delta$-monomials of the form $p=p_1=xm$ and $p_2=m'$ as in the previous sections. In the graph $T_{p_1}$, let $e$ be the edge associated to the first letter $p=xm$. 
\begin{itemize}
	\item Denote by $\mcal P_1 \subset \mcal P(V)$ the set of partitions $\pi$ such that $T^\pi \in \boldsymbol{\mcal {DU}}\cup \boldsymbol{\mcal {FT}}$, $e$ is twinned with an edge $e'$ of $m$ (so $e$ is not in the double cycle if $T^\pi \in \boldsymbol{\mcal {DU}}$) and is not in the group of edges of multi. 4 if $T^\pi \in \boldsymbol{\mcal {FT}}$. 
	\item We denote by $\mcal P_2\subset \mcal P(V)$ the set of partitions $\pi$ such that $T^\pi \in \boldsymbol{\mcal {FT}}$ and $e$ belongs to the group of edges of multip. 4. 
	\item Let $\mcal P_{k}\subset \mcal P(V)$ be the set of partitions $\pi$ such that $T^\pi \in \boldsymbol{\mcal {DU}}$, $e$ belongs to the double cycle of $T_\pi$ and the double cycle of $T_\pi$ is of length $k\geq 3$.
\end{itemize}
Setting  $\alpha_k(p_1,p_2)  :=  \sum_{ \pi \in \mcal P_k} \omega_X^{(2)}(\pi)\omega_Y(\pi)$ for any $k\geq 1$ (the sum is finite), by Proposition \ref{Prop:SecondAsympIndep} we have $\Phi^{(2)}(p_1,p_2) =  \sum_{k\geq 1}\alpha_k(p_1,p_2)+o(1)$. The same reasoning as for SD equation yields the following relations. Firstly
	\eq
		\alpha_1(p_1,p_2) & = & \sum_{ p_1 = x\ell x r} \big( \Phi_N^{(2)}(\ell,m') \Phi_N(r)+\Phi_N(\ell) \Phi_N^{(2)}(r,m')\big)+o(1),
	\qe
since removing $e,e'$ form $T^\pi_{p_1}$ gives two connected components, one of double tree type, and the other will contribute with $T^\pi_{p_2}$ to form an element of $\boldsymbol{\mcal {DU}}\cup \boldsymbol{\mcal {FT}}$. By the Schwinger-Dyson equation, \eqref{Eq:TransitivityShort} and the induction hypothesis, denoting by $GB$ the ground block of $p=p_1$, we verify easily that $\alpha_1(p,\cdot \, )$ is equal to
	\eqa
		\sum_{\substack{B \in \sigma(p)\\ B \neq GB}}  \Phi_N^{(2)}(p_B, \, \cdot \, ) \prod_{B' \neq B}\Phi_N(p_{B'})+ \alpha_1(p_{GB},\cdot \, )\prod_{B \in \sigma(p)} \Phi_N(p_{B})+o(1).\label{Eq:SOT1}
	\qea

Similarly, setting $m_4(x) = \esp[|x_{12}|^4]$ when $x$ is associated to $X_N=\big( \frac{x_{ij}}{\sqrt  N}\big)$, 
	\eq
		\alpha_2(p_1,p_2) & = & \sum_{  p_1=x\ell x r }\sum_{ \substack{ B\in \sigma(p_2) \\ {p_2}_{B} = \ell' x m''xr'}} (m_4(x) - 1) \Big( \Phi_N\big( \Delta(\ell) \Delta(m') \big)  \Phi_N\big( \Delta(r) \Delta(r'\ell') \big)\\
		& & \ \ \ + \Phi_N\big( \Delta(\ell) \Delta(r'\ell') \big)  \Phi_N\big( \Delta(r) \Delta(m') \big) \Big)  +o(1),
	\qe
Indeed the choice of a block $B$ and of a decomposition ${p_2}_{B} = \ell' x m''xr'$ correspond to the choice of edges $e'',e'''$ such that $\{e,e',e'',e'''\}$ form the edge of multip. 4. The graph obtained by removing the edge of multip. 4 is then a union of double-tree type graphs. By \eqref{Eq:TransitivityShort}, we can split the terms $\Phi_N\big( \Delta(\ell) \Delta(m') \big)= \Phi_N(\ell)\Phi_N(m')+o(1)$. This allows to prove, with the same arguments as for \eqref{Eq:SOT1}
	\eqa
		\alpha_2(p,\cdot \, )  & = &  \alpha_2(p_{GB},\cdot \, )\prod_{\substack{ B \in \sigma(p)\\ B\neq GB}} \Phi_N(p_{B})+o(1).\label{Eq:SOT2}
	\qea
Finally, for any $k\geq 3$, we consider all decompositions $p_1 = xm_1 x m_2 x \cdots x m_k,$ and ${p_2}_B = \ell x m'_1  \cdots x m'_{k-1}x r,$ for any block $B\in \sigma(p_2)$ in the formula below
	\eq
		\alpha_k(p_1,p_2) & =&  \sum_{ \substack{ p_1 = \cdots \\ {p_2}_B=\cdots }} \sum_{i=1}^k \Phi( \Delta(m_1) \Delta(m'_{i+1})) \cdots \Phi( \Delta(m_k)\Delta( m'_{i+k})) +o(1),
	\qe
with indices modulo $k$ and $m'_{k}:=r\ell$. The decompositions of $p_1$ and $p_2$ represent the choice of the edges of the double cycles. The sum over $i$ represent the $n$ different way we can merge the $n$-cycles of $T^\pi_1$ and $T^\pi_2$ in a double cycle. As in the previous case, this allows to obtain that \eqref{Eq:SOT2} is valid for $\alpha_k$ replacing $\alpha_1$ in both sides. Together with \eqref{Eq:SOT1}, these relations imply  Formula \eqref{Eq:SecondOrderTransitivity}.

\begin{remark} The Leibniz property is stated only on $E_1$. It is expected to hold for all $\Delta$-polynomials, the restriction has only purpose to simplify the proof.
\end{remark}

\end{document}